\documentclass[11 pt, oneside]{amsart}
\usepackage[utf8]{inputenc}
\usepackage[margin=3cm]{geometry}
\makeatletter
\def\l@subsection{\@tocline{2}{0pt}{2.5pc}{5pc}{}}
\def\l@subsubsection{\@tocline{2}{0pt}{5pc}{7.5pc}{}}
\makeatother

\usepackage{stmaryrd}
\usepackage{amsmath,amssymb,amsthm, mathrsfs, mathtools, comment}
\usepackage{amscd,mathtools}
\usepackage{geometry}
\usepackage{graphicx,epstopdf}
\usepackage{color,xcolor}
\usepackage{enumerate}
\usepackage{xspace}
\usepackage{tipa}
\usepackage[user,titleref]{zref}
\usepackage{epsfig}

  \usepackage{tikz}
  \usetikzlibrary{calc,through,backgrounds, arrows}
  \usetikzlibrary{decorations.pathmorphing, decorations.pathreplacing,positioning,fit}

\usepackage{hyperref}
\usepackage{float}

\newcommand{\calA}{\mathcal{A}}

\newcommand{\calC}{\mathcal{C}}

\newcommand{\calG}{\mathcal{G}}

\newcommand{\calM}{\mathcal{M}}

\newcommand{\calP}{\mathcal{P}}

\newcommand{\calR}{\mathcal{R}}
\newcommand{\calS}{\mathcal{S}}

\newcommand{\calV}{\mathcal{V}}

\newcommand{\ZZ}{\mathbb{Z}}

\newtheorem{theorem}{Theorem}[section]
\newtheorem{proposition}[theorem]{Proposition}
\newtheorem{corollary}[theorem]{Corollary}
\newtheorem{lemma}[theorem]{Lemma}

\newtheorem{question}[theorem]{Question}

\newtheorem{introthm}{Theorem}
\newtheorem{introcor}[introthm]{Corollary}

\newtheorem{introconj}[introthm]{Conjecture}

\theoremstyle{definition}
\newtheorem{definition}[theorem]{Definition}

\newtheorem*{claim*}{Claim}

\newtheorem{example}[theorem]{Example}

\newtheorem*{question*}{Question}
\newtheorem*{answer*}{Answer}
\newtheorem*{application*}{Application}

\theoremstyle{remark}
\newtheorem{remark}[theorem]{Remark}
\newtheorem*{remark*}{Remark}

\newcommand{\figref}[1]{Figure~\ref{#1}}

\DeclareMathOperator{\genus}{genus}
\DeclareMathOperator{\End}{End}

\DeclareMathOperator{\tw}{tw}

\DeclareMathOperator{\Map}{Map}

\DeclareMathOperator{\Div}{Div}
\DeclareMathOperator{\id}{id}

\newcommand{\Aut}{\ensuremath{\operatorname{Aut}}\xspace}

\DeclareMathOperator{\Homeo}{Homeo}

\DeclareMathOperator{\diam}{diam}

\DeclareMathOperator{\MapSig}{\Map(\Sigma)}

\newcommand{\param}{{\mathchoice{\mkern1mu\mbox{\raise2.2pt\hbox{$
\centerdot$}}
\mkern1mu}{\mkern1mu\mbox{\raise2.2pt\hbox{$\centerdot$}}\mkern1mu}{
\mkern1.5mu\centerdot\mkern1.5mu}{\mkern1.5mu\centerdot\mkern1.5mu}}}

\DeclarePairedDelimiterX{\norm}[1]{\lvert}{\rvert}{#1}
\DeclarePairedDelimiterX{\Norm}[1]{\lVert}{\rVert}{#1}

\newcommand{\Cnp}{{C_{\rm np}}}
\newcommand{\CnpS}{{C_{\rm np}(\Sigma)}}

\newcommand{\CAT}{\ensuremath{\operatorname{CAT}(0)}\xspace} \newcommand{\mute}[1]{}

\begin{document}

\title[The non-peripheral curve graph and divergence in big mapping class groups]
{The non-peripheral curve graph and divergence in big mapping class groups}

%
\author{Assaf Bar-Natan}
\email{assaf.barnatan@gmail.com}

\author{Yulan Qing}
\address{Department of Mathematics, University of Tennessee}
\email{yqing@utk.edu}

\author{Kasra Rafi}
\address{Department of Mathematics, University of Toronto}
\email{rafi@math.toronto.edu}
%



\begin{abstract}
We introduce a
numerical invariant $\zeta(\Sigma)$ measuring the end-complexity of $\Sigma$ 
and use it to organize coarse-geometric features of $\Map(\Sigma)$.
Our main tool is the \emph{non-peripheral curve graph} $\Cnp(\Sigma)$, whose vertices are those essential simple closed
curves that cannot be pushed out of every compact subsurface, with edges given by disjointness.
Assuming $\Map(\Sigma)$ is CB-generated and $\zeta(\Sigma)\ge 5$, we prove that $\Cnp(\Sigma)$ is connected, has infinite
diameter, is Gromov hyperbolic, and that the $\Map(\Sigma)$-action has unbounded orbits.  As applications, we show that
if $\zeta(\Sigma)\ge 4$ then $\Map(\Sigma)$ has infinite coarse rank, and if $\zeta(\Sigma)\ge 5$ then $\Map(\Sigma)$ has
at most quadratic divergence, hence is one-ended.  
\end{abstract}

\maketitle

\tableofcontents

\section{Introduction}

Let $\Sigma$ be a connected, orientable, infinite-type surface such that the end space of
$\Sigma$ is stable (Definition~\ref{def:stable}). Let $\Map(\Sigma)$ denote its mapping class group:
the group of orientation-preserving homeomorphisms up to isotopy. Equipped with
the compact-open topology, $\Map(\Sigma)$ is a Polish group \cite{Rosendal}. In contrast to the
finite-type case, $\Map(\Sigma)$ is never countably generated. Nevertheless, for many infinite-type
surfaces the associdated $\MapSig$ admits a robust large-scale geometry in the sense of Rosendal: if $\Map(\Sigma)$ is
\emph{CB-generated} (i.e.\ generated by a coarsely bounded neighborhood of the identity together
with finitely many additional elements), then any two word metrics coming from CB-generating sets
are quasi-isometric, so the quasi-isometry type of $\Map(\Sigma)$ is well-defined
\cite{Rosendal,MR} within this class of generating sets.

\subsection*{CB-generation and coarse geometry}
In \cite{MR} Mann and Rafi give a classification of those infinite-type surfaces
$\Sigma$ for which $\Map(\Sigma)$ is coarsely boundedly generated, or CB-generated. In this setting,
one can equip $\Map(\Sigma)$ with a word metric coming from any coarsely bounded generating set, and
the resulting quasi-isometry type is independent of the choice of CB generators \cite{Rosendal,MR}.
This classification opens the door to a systematic study of quasi-isometry invariants for big mapping
class groups, in direct analogy with the coarse geometry of finitely generated groups.

Several recent works develop this viewpoint. Grant--Rafi--Verberne \cite{GRV} investigate quasi-isometry
invariants such as asymptotic dimension and introduce \emph{essential shifts} (shift maps that are
countable and cannot be pushed to infinity); existence of essential shifts implies the existence of
high-dimensional quasi-flats in $\Map(\Sigma)$. Related results on asymptotic dimension for various 
classes of big mapping class groups appear in the recent preprint \cite{KS25}, where Kopreski--Shaji construct arc-and-curve models for locally bounded Polish subgroups of $\Map(\Sigma)$, and use 
these models to show (in particular) that the asymptotic dimension of a CB-generated $\Map(\Sigma)$ 
is infinite unless $\Map(\Sigma)$ is CB. 
We emphasize that asymptotic dimension alone does not detect coarse rank: in general, infinite 
asymptotic dimension does not rule out the possibility that the coarse rank is finite. 
In a different direction, Horbez--Qing--Rafi \cite{HQR}
classify when (CB-generated) big mapping class groups admit nonelementary actions on hyperbolic spaces
and derive applications. It turns out that $\Map(\Sigma)$ has a non-trivial action on a Gromov hyperbolic
space if and only if $\Sigma$ contains non-displaceable subsurfaces. These results relate the topology
of $\Sigma$ to the large-scale geometry of $\Map(\Sigma)$.

\subsection*{A complexity for end spaces}
In this paper we introduce a numerical invariant $\zeta(\Sigma)$ (
Section~\ref{sec:nondisplaceable},  Definition~\ref{countcomplexity}), which measures the
``end-complexity'' of $\Sigma$ in a way that is tailored to the CB setting. Concretely, $\zeta(\Sigma)$
is defined using the minimal anchor surface $K_0$ from \cite{MR} such that the stabilizer
of $K_0$ is a CB subgroup of $\Map(\Sigma)$. One should think of $\zeta(\Sigma)$ as an analogue of the
finite-type complexity parameter ``number of boundary components'' (or punctures), which strongly
influences the coarse geometry of $\Map(S)$ for finite-type surfaces. We show that $\zeta(\Sigma)$
predicts several coarse-geometric features of $\Map(\Sigma)$.

\subsection*{Main results}
The first application concerns coarse rank. For finitely generated mapping class groups, the (coarse)
rank is the largest $n$ such that $\ZZ^n$ quasi-isometrically embeds into the group, and it is finite for
finite-type mapping class groups \cite{BM}. In the present setting, CB-generation gives a well-defined
quasi-isometry type of word metrics, and hence coarse rank is again a quasi-isometry invariant.

\begin{introthm}[Mapping class groups with infinite coarse rank]\label{introthm:Coarse}
Suppose $\Sigma$ is stable, $\Map(\Sigma)$ is CB-generated, and $\zeta(\Sigma)\ge 4$.
Then $\Map(\Sigma)$ has infinite coarse rank.
\end{introthm}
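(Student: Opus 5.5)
The plan is to produce, for every $n$, a quasi-isometric embedding $\ZZ^n\to\Map(\Sigma)$ generated by $n$ pairwise commuting mapping classes supported on disjoint subsurfaces. The point of $\zeta(\Sigma)\ge 4$ is to guarantee enough room: using the anchor surface $K_0$ of \cite{MR}, the complement $\Sigma\setminus K_0$ has at least $\zeta(\Sigma)$ complementary pieces whose end spaces are pairwise homeomorphic copies (or at least of the same maximal type). By stability and the self-similarity of these end-neighborhoods, each such piece contains infinitely many pairwise disjoint, mutually homeomorphic subsurfaces. I would first construct, inside $\Sigma$, an infinite family $W_1,W_2,\dots$ of pairwise disjoint non-displaceable subsurfaces. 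Each $W_i$ is obtained by tubing a finite-type subsurface of $K_0$ to small disjoint "fingers" reaching into at least four of the distinguished end pieces, so that $W_i$ has at least four boundary components facing distinct maximal ends. The requirement of four pieces, rather than two or three, is meant to ensure two things: $W_i$ and $W_j$ can be made disjoint while each remains non-displaceable, and $W_i$ is not an annulus or a pair of pants, so it supports pseudo-Anosov maps.

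Next, in each $W_i$ I pick a pseudo-Anosov (or partial pseudo-Anosov) $f_i$ supported on $W_i$. The $f_i$ commute because their supports are disjoint, so they give a homomorphism $\ZZ^n\to\Map(\Sigma)$, $(a_1,\dots,a_n)\mapsto f_1^{a_1}\cdots f_n^{a_n}$. This map is Lipschitz, since each $f_i$ has bounded word length for a CB generating set. The work is in the lower bound. For this I use the subsurface-projection and hyperbolic-action machinery: because $W_i$ is non-displaceable, \cite{HQR} and Masur--Minsky style arguments give a $\Map(\Sigma)$-coarsely Lipschitz map $\pi_i\colon\Map(\Sigma)\to\calC(W_i)$, defined by projecting a fixed $\Map(\Sigma)$-orbit of a CB-bounded marking of $K_0$ to $W_i$. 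The $\Map(\Sigma)$-translates of $W_i$ all intersect $K_0$, so each generator in the CB set moves $\pi_i$ a bounded amount. Since $f_i$ acts loxodromically on $\calC(W_i)$ and the $f_j$ with $j\ne i$ act trivially on it, $d_{\calC(W_i)}(\pi_i(1),\pi_i(g))\succeq |a_i|$ for $g=\prod f_j^{a_j}$. Summing over $i$ gives $\|g\|\succeq \max_i|a_i|\succeq \tfrac1n\sum|a_i|$, which is a quasi-isometric embedding of $\ZZ^n$. Since $n$ is arbitrary, the coarse rank is infinite.

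The main obstacle is the second step: proving that each $\pi_i$ is coarsely Lipschitz with respect to a CB word metric. CB neighborhoods of the identity contain maps that are wild far from $K_0$, so I need every element of the CB generating set $V$ to move the projection to $\calC(W_i)$ a bounded amount. My first attempt is to take $V$ to lie in a neighborhood of the identity fixing $K_0$, as in \cite{MR}, and to arrange, using disjointness, that the fingers of every $W_i$ leave $K_0$ through boundary curves fixed by such maps. Then $V$ preserves $\partial W_i\cap K_0$ up to isotopy, and the projection of a marking of $K_0$ moves boundedly. I expect this is exactly where the count $\zeta\ge 4$ enters, guaranteeing that the $W_i$ stay disjoint and non-displaceable while each still meets $K_0$ essentially. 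If this step fails, I will replace the curve graphs $\calC(W_i)$ by a version of the graph $\Cnp$ restricted to $W_i$.
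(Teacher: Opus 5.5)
\textbf{There is a genuine gap in your first step.} In general you cannot find arbitrarily many pairwise disjoint \emph{non-displaceable} subsurfaces, and the rest of the argument depends on them.

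A non-displaceable $W$ meets every translate $g(W)$. So the natural way to manufacture infinitely many disjoint pieces, pushing one piece along a shift map, is unavailable to you, and your $W_i$ would have to lie in pairwise distinct $\Map(\Sigma)$-orbits. In fact the family can fail to exist at all. Take the planar surface with end space $\omega\cdot 4+1$: four maximal ends $x_1,\dots,x_4$, each accumulated by punctures. It is stable, $\Map(\Sigma)$ is CB-generated, and $\zeta(\Sigma)=4$ (four isolated maximal ends, no Cantor types, and the punctures accumulate to all four $x_i$).

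\begin{itemize}
\item Any compact subsurface whose complement leaves two of the $x_i$ in one component is displaceable. This uses the self-similarity of the neighborhoods of the $x_i$, together with a permutation of the $x_i$ when the partition has the form $\{x_1\},\{x_2\},\{x_3,x_4\}$. Hence a non-displaceable $W$ must separate all four $x_i$.
\item Now suppose $W'$ is disjoint from such a $W$. Then $W'$ lies in one component $U$ of $\Sigma\setminus W$. The connected set $\Sigma\setminus U$ contains at least three of the $x_i$ and misses $W'$, so $W'$ cannot separate them.
\end{itemize}

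So this surface contains no two disjoint non-displaceable subsurfaces, let alone infinitely many with four boundary components ``facing distinct maximal ends.'' Separately, infinitely many pairwise disjoint $W_i$ cannot each contain a non-annular essential subsurface of the finite-type surface $K_0$, so the tubing construction as described cannot produce an infinite family anyway. Your description of $\Sigma\setminus K_0$ is also off: the sets in $\calA$ generally carry different maximal types, not copies of one type.

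\textbf{The missing idea is that the lower bound never uses non-displaceability.} It only needs every translate of each piece to meet $K_0$, i.e.\ non-peripherality, and non-peripheral pieces may be displaceable. This is how the paper proceeds.
\begin{itemize}
\item Since $\zeta(\Sigma)\ge 4$, $K_0$ contains a separating curve $\gamma$ with at least two boundary components of $K_0$ on each side. A case analysis using minimality of $K_0$ shows neither side can be moved into a single component of $\Sigma\setminus K_0$, so $\gamma$ is non-peripheral.
\item Choosing $\gamma$ to cut across the support of a shift map $f$ in the generating set, so that $f(\gamma)\cap\gamma=\emptyset$, the curves $\gamma_j=f^j(\gamma)$ are pairwise disjoint and non-peripheral. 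In the example above, $\gamma$ separates $\{x_1,x_2\}$ from $\{x_3,x_4\}$ and $f$ shifts punctures from near $x_1$ to near $x_3$.
\item Dehn twists about $k$ of these curves give $\ZZ^k$. The lower bound comes from the length function $\phi\mapsto\sup_{g}\sum_i \tw_{g(\gamma_i)}(\mu,\phi(\mu))$. It is defined because every $g(\gamma_i)$ meets $K_0$, and it is bounded on the CB generating set by Rosendal's theorem.
\end{itemize}

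This also settles your ``main obstacle,'' whose proposed fix is insufficient. Since $d_{W}(g\mu,gs\mu)=d_{g^{-1}(W)}(\mu,s\mu)$, coarse Lipschitzness of $g\mapsto\pi_W(g\mu)$ requires a bound over \emph{all} translates of $W$. Controlling $\partial W\cap K_0$ for $W$ itself does not give that. The supremum over translates in the length function is exactly what supplies the uniform bound.
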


Another source of infinite coarse rank is the existence of an essential shift as discussed in \cite{GRV}.
In fact, for $\zeta(\Sigma)=1,2$, the mapping class group is either coarsely bounded or contains an essential shift. For $\zeta(\Sigma)=3$, there are examples of surfaces 
where $\Map(\Sigma)$ is unbounded and Gromov hyperbolic which implies that the coarse rank is 1 (see \cite{Sch24}). 
This yields the following possible dichotomy in the CB-generated setting where the case of $\zeta(\Sigma) =3$ need a closer examination.

\begin{introconj}\label{introcor:hyperbolic-or-rank}
Suppose $\Sigma$ is stable and $\Map(\Sigma)$ is CB-generated. Then $\Map(\Sigma)$ is either Gromov
hyperbolic or it has infinite coarse rank.
\end{introconj}

Our second application concerns divergence, a quasi-isometry invariant that measures how long detours
must be in order to connect two points while avoiding a large ball. Gersten introduced divergence as a
quasi-isometry invariant in \cite{Ger94}; in Euclidean space divergence is linear, while in
$\delta$-hyperbolic spaces divergence is at least exponential. Over the past two decades it has become
clear that quadratic divergence is remarkably common: Gersten constructed CAT(0) examples with quadratic
divergence in \cite{Ger94}, and in \cite{Ger94B} he showed that fundamental groups of closed geometric
$3$-manifolds have divergence that is either linear, quadratic, or exponential, with quadratic divergence
corresponding to graph manifold groups; this characterization was strengthened by Kapovich--Kleiner--Leeb
\cite{KKL}. In higher rank, Druţu--Mozes--Sapir \cite{DMS} develop a robust definition of divergence for
geodesic metric spaces and conjecture that many higher rank lattices have linear divergence; and
Behrstock--Charney \cite{BC} show that most right-angled Artin groups have either linear or quadratic
divergence, with the linear case characterized by the defining graph being a join. For mapping class groups and Teichm\"uller space of finite-type surfaces, quadratic divergence was
proved by Behrstock \cite{Beh06} and by Duchin--Rafi \cite{DR}, who give a flexible detour
construction; we will follow the approach of \cite{DR}
(see also \cite{BDM}).

\begin{introthm}[Quadratic divergence bound]\label{introthmDiv}
Suppose $\Sigma$ is stable, $\Map(\Sigma)$ is CB-generated, and $\zeta(\Sigma)\ge 5$.
Then $\Map(\Sigma)$ has at most quadratic divergence.
\end{introthm}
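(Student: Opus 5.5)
The approach follows the Duchin--Rafi detour construction \cite{DR}, with the non-peripheral curve graph $\CnpS$ playing the role of the curve graph. Fix a CB-generating set and the word metric on $\Map(\Sigma)$; fix a basepoint identity $e$. Given $g,h$ in $\Map(\Sigma)$ with $d(e,g)=d(e,h)=r$, we must join $g$ to $h$ by a path of length $O(r^2)$ that avoids the ball of radius $r/C$ about $e$.

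\textbf{Step 1: a Lipschitz orbit map to $\CnpS$.} Let $K_0$ be the minimal anchor surface (so $\mathrm{Stab}(K_0)$ is CB), and choose a non-peripheral curve $\alpha_0$ meeting $K_0$. The orbit map $\Map(\Sigma)\to\CnpS$, $g\mapsto g\alpha_0$, is coarsely Lipschitz. This holds because each CB generator moves $\alpha_0$ a bounded distance: a CB neighbourhood of the identity fixes a large compact subsurface, and the finitely many extra generators move $\alpha_0$ a bounded amount. By the results that $\CnpS$ is connected, infinite-diameter and hyperbolic with unbounded orbits (assuming $\zeta(\Sigma)\ge5$), there is an element $\phi$ acting loxodromically, or at least with $d(\alpha_0,\phi^n\alpha_0)\to\infty$ linearly.

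\textbf{Step 2: the detour.} Write $g=s_1\cdots s_r$ in generators. Following \cite{DR}, conjugate so that each generator $s_i$ is supported, up to a bounded CB piece, away from some non-peripheral curve $\beta$. Here we use $\zeta(\Sigma)\ge5$, which guarantees enough disjoint non-peripheral curves that any generator (a bounded-support map or one of the finitely many extra elements) commutes up to bounded error with twists or partial pseudo-Anosovs supported on the complement of a suitable curve. Choose a large power $T=\phi^{N}$ with $N\approx r$, so that $d_{\Cnp}(\alpha_0,T\alpha_0)\gtrsim r$. The path then goes
\[
g \to gT \to s_1\cdots s_{r-1}T' \to \cdots \to T'' \to \cdots \to hT''' \to h,
\]
where one peels off one generator at a time. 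At each stage we insert a bounded number of conjugated copies of $T$ (each of length $\approx r$), so that the intermediate points stay far in $\CnpS$ from $\alpha_0$. There are $O(r)$ stages of cost $O(r)$, giving total length $O(r^2)$. Since the orbit map is coarsely Lipschitz, any point whose image lies at $\Cnp$-distance $\gtrsim r$ from $\alpha_0$ is at word distance $\gtrsim r/C$ from $e$. This yields avoidance of the ball. At the middle we must connect $T''$-type elements coming from $g$ and from $h$; this is done inside a far region, using that the $\Map(\Sigma)$-action is transitive enough on non-peripheral curves (finitely many orbits meeting $K_0$) and a connecting path in $\CnpS$ lifted to the group.

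\textbf{Main obstacle.} The hard step is the commutation needed in Step 2. For each generator $s$, we need a far-away element (a conjugate of $\phi$) that commutes with $s$, or whose conjugation by $s$ stays far from $\alpha_0$. This needs a non-peripheral curve disjoint from the support of $s$, together with a hyperbolic-type element supported in its complement that moves $\alpha_0$ far. A CB neighbourhood only fixes a compact set pointwise; it does not have compact support. So I would first try to decompose such generators, using the CB-ness of $\mathrm{Stab}(K_0)$ and the anchor structure from \cite{MR}, as products of boundedly many elements each fixing one of the $\zeta(\Sigma)\ge5$ complementary end-regions. The counting constraint $\zeta\ge5$ (versus $\ge4$ for rank) should be exactly what leaves a region, disjoint from the supports, with enough room to host a non-peripheral curve and an unbounded action.

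Finally, since quadratic divergence is subexponential, and in particular the divergence function is superlinear only polynomially, one-endedness follows as stated in the abstract.
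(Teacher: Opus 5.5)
There is a genuine gap in Step~2. Your certificate for staying outside the ball is $\Cnp$-displacement via the Lipschitz orbit map. However, the elements you must carry along the word cannot have large $\Cnp$-displacement.

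To read a letter $s$ at unit cost while keeping the far element on the right, you need $sT=Ts$. Many generators are twists about non-peripheral curves near $\alpha_0$, e.g.\ a twist in $\mathcal L$ about a curve of $K_0$ with at least two boundary components of $K_0$ on each side. If $T$ commutes with $D_\gamma$, then $D_{T\gamma}=TD_\gamma T^{-1}=D_\gamma$, so $T\gamma=\gamma$ and $d_{\Cnp}(\alpha_0,T\alpha_0)\le 2d_{\Cnp}(\alpha_0,\gamma)$, a uniform constant.

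Your proposed fix in the ``Main obstacle'' paragraph has the same defect. Anything supported in the complement of a non-peripheral curve $\beta$ fixes the vertex $\beta$, so it is elliptic on $\Cnp$ and moves $\alpha_0$ by at most $2d_{\Cnp}(\alpha_0,\beta)$. This is fatal at the midpoint $T''$ of your path: there the prefix is $\id$, so the whole lower bound must come from the carried element.

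Carrying conjugates instead, $pT\mapsto pTs=ps\,(s^{-1}Ts)$, preserves the $\Cnp$ estimate while reading. But you then have to join $gTg^{-1}$ to $hT'h^{-1}$ outside the ball. That is the original divergence problem again, unless you can control Gromov products of axes of arbitrary conjugates, and nothing available gives you that. Note also that unbounded orbits alone do not even produce a loxodromic $\phi$.

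The missing idea is to certify distance from $\id$ by \emph{twisting} rather than by $\Cnp$. For non-peripheral $\alpha$, every translate $h(\alpha)$ meets $K_0$, so $L_\alpha(g)=\sup_h \tw_{h(\alpha)}(\mu,g(\mu))$ is a well-defined length function. It is bounded on the CB generating set, which gives
\[
\Norm{pD_\alpha^{m}}_{\mathcal S}\ge \frac{|m|-M\Norm{p}_{\mathcal S}-A}{M},
\]
and the analogous bound for $pD_\alpha^mD_\beta^n$ when $\alpha$ and $\beta$ are disjoint.

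Dehn twists are exactly the elements that do commute with generators. For each $s$ one picks a non-peripheral $\alpha_s$ fixed by $s$. Only \emph{connectivity} of $\Cnp$ is then needed, to join consecutive $\alpha_s$'s by chains of disjoint curves. One switches via $p_jD_{\beta_j}^{\pm N}\to p_jD_{\beta_j}^{\pm N}D_{\beta_{j+1}}^{\pm N}\to p_jD_{\beta_{j+1}}^{\pm N}$ at cost $2N$ per switch.

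Reading $g_1^{-1}g_2$ directly, rather than peeling down to a midpoint, and choosing the end signs along the quasi-geodesic twist lines gives the $O(R^2)$ detour. Hyperbolicity and loxodromic elements are never used.
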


In another word, $\Map(\Sigma)$ is thick in the sense of Behrstock-Druţu-Mosher \cite{BDM}.
As a geometric consequence, we obtain one-endedness (in the sense of ends of the Cayley graph associated
to any CB word metric). As a geometric consequence, we obtain one-endedness
(in the sense of ends of the Cayley graph associated to any CB word metric).

\begin{introcor}\label{introcor:one-ended}
Suppose $\Sigma$ is stable, $\Map(\Sigma)$ is CB-generated, and $\zeta(\Sigma)\ge 5$.
Then $\Map(\Sigma)$ is one-ended.
\end{introcor}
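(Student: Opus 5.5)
The plan is to deduce one-endedness directly from Theorem~\ref{introthmDiv}, using the standard principle that a geodesic metric space with subexponential (in particular at most quadratic) divergence has exactly one end. Fix a CB generating set and let $X$ be the associated Cayley graph of $\Map(\Sigma)$. By \cite{Rosendal,MR}, $X$ is well defined up to quasi-isometry, and the number of ends is a quasi-isometry invariant of geodesic spaces. So it suffices to show that $X$ has one end. $X$ is unbounded: under $\zeta(\Sigma)\ge 5$ the action on $\Cnp(\Sigma)$ has unbounded orbits, so $\Map(\Sigma)$ is not CB. Hence $X$ has at least one end, and we must rule out two or more.

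The key steps run as follows. Suppose, for contradiction, that there is a radius $r_0$ and a ball $B(e,r_0)$ whose complement has at least two unbounded components $U_1, U_2$. The divergence function $\mathrm{Div}(r)$ bounds, for points $x,y$ with $d(e,x)=d(e,y)=r$, the length of a path from $x$ to $y$ avoiding $B(e,\delta r)$. In the Duchin--Rafi/Druţu--Mozes--Sapir formalism, finiteness of the divergence function includes the statement that such avoiding paths exist. Pick $r$ large with $\delta r > r_0$, a point $x\in U_1$ and a point $y\in U_2$, both at distance $r$ from $e$. Such points exist because the $U_i$ are unbounded and connected, and $X$ is a graph, so distances along a path change by at most one per step.

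The bound from Theorem~\ref{introthmDiv} gives a path of length at most $C r^2$ from $x$ to $y$ outside $B(e,\delta r)\supseteq B(e,r_0)$. This contradicts $U_1\neq U_2$. Hence $X$ has exactly one end.

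The main obstacle is the bookkeeping of the divergence definition. We must check that the version proved in Theorem~\ref{introthmDiv} asserts the existence of avoiding paths for \emph{all} pairs at distance $r$, and not merely for pairs along a geodesic through $e$. We must also check that it is stated relative to any basepoint, which holds by homogeneity of the Cayley graph.

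If the theorem is only stated for points on bi-infinite geodesics, the fallback is as follows. Use homogeneity together with the fact that $\Map(\Sigma)$ contains elements acting loxodromically on $\Cnp(\Sigma)$, by the unbounded orbits and hyperbolicity results. Their quasi-axes give quasi-geodesics through any point, and these would be used to connect arbitrary points at distance $r$ to points on such lines within bounded detours outside the ball.

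A second subtlety is that $X$ is not locally finite, so ends must be understood as the number of unbounded components of complements of bounded sets. The argument above uses only this definition, so it applies unchanged.
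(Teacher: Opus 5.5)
Your argument is correct and is the same deduction the paper makes: the paper simply asserts that one-endedness follows from the quadratic divergence bound. Your fallback through loxodromic quasi-axes is unnecessary, since Theorem~\ref{thm:Div2} already gives avoiding paths for \emph{every} pair $g_1,g_2$ with $R\le\Norm{g_i}_{\mathcal S}\le 2R$. One refinement: because the Cayley graph is not locally finite, unboundedness alone does not produce an unbounded component of $X-B(e,r)$. Under the inverse-limit definition the paper uses, existence of the end should instead come from the same estimate, which shows that all points of norm at least some $r'$ lie in a single component of $X-B(e,r)$; this yields existence and uniqueness of the end at once.
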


See also \cite{OQW}, where one-endedness is established for avenue surfaces.
These surfaces have $\zeta(\Sigma)=2$, and the associated mapping class groups all contain an
essential shift.

\subsection*{A $\zeta$-based picture.}
A recurring theme in this paper is that $\zeta(\Sigma)$ organizes the coarse geometry 
of $\Map(\Sigma)$ much like finite-type complexity organizes the geometry of $\Map(S)$. 
The following table summarizes the picture for stable surfaces when genus is either zero
or infinite. Recall that, if there is an essential shift, then $\Map(\Sigma)$ has
infinite coarse rank \cite{GRV}.

\medskip
\begin{center}\label{table:intro}
\begin{tabular}{|c|p{0.72\linewidth}|}
\hline
Complexity $\zeta(\Sigma)$ & Group properties of $\Map(\Sigma)$ \\ \hline
$\zeta(\Sigma)=1$ & $\Map(\Sigma)$ is always coarsely bounded \cite{MR}. \\ \hline
$\zeta(\Sigma)=2$ & $\Map(\Sigma)$ is coarsely bounded iff there are no essential shifts. \\ \hline
$\zeta(\Sigma)=3$ &
$\Map(\Sigma)$ is not coarsely bounded and sometimes Gromov hyperbolic. \\ \hline
$\zeta(\Sigma)=4$ & $\Map(\Sigma)$ has infinite coarse rank (Theorem~\ref{introthm:Coarse}). \\ \hline
$\zeta(\Sigma)\ge 5$ & $\Map(\Sigma)$ is one-ended and has at most quadratic divergence
(Theorem~\ref{introthmDiv} and Corollary~\ref{introcor:one-ended}). \\ \hline
\end{tabular}
\end{center}

\subsection*{The non-peripheral curve graph.}
The main tool in this paper is a new curve graph adapted to CB coarse geometry. In the finite-type
setting, Masur--Minsky's curve graph is central: the action of $\Map(S)$ on the curve graph captures
much of the large-scale geometry of $\Map(S)$ and underlies powerful hierarchical structure. For
infinite-type surfaces, finding useful analogues has been a major theme; recent years have produced
several hyperbolic graphs with big mapping class group actions, including \cite{DFV,BNV,HQR} among
others.

A naive adaptation of the classical curve graph does not directly serve coarse geometry in the big
setting. Indeed, the ``usual'' curve graph on $\Sigma$ (vertices all essential curves, edges
disjointness) is algebraically interesting (see, for instance, \cite{DFV,MR} and the discussion
therein), but in the infinite-type cases it has finite diameter (in fact diameter $2$), so the action
of $\Map(\Sigma)$ on this graph does not detect large-scale features of $\Map(\Sigma)$.

Our starting point is that, in the CB framework, the stabilizer of a sufficiently large compact
subsurface is a coarsely bounded subset of $\Map(\Sigma)$, i.e.\ it is \emph{small} from the viewpoint
of coarse geometry. Therefore, to build a curve-type graph that sees unbounded geometry, one should
focus on curves that cannot be pushed out of every compact region. In the finite-type case, the word
\emph{peripheral} means the curve bounds a once-punctured disk; equivalently, it can be pushed into a
cusp away from all compact sets. For infinite type we adopt the latter point of view: a curve (or
compact subsurface) is \emph{peripheral} if for every compact subsurface $K\subset\Sigma$ there exists
$g\in\Map(\Sigma)$ such that $g(\alpha)\cap K=\emptyset$. Curves that fail this are
\emph{non-peripheral}. This definition is tightly related to the notion of \emph{essential shift} from
\cite{GRV}: a shift map is geometrically essential only when its support cannot be pushed into an end
neighborhood, i.e.\ when it is supported on a non-peripheral region. This philosophy also explains why
non-peripheral subsurfaces and curves are the natural objects to examine in CB coarse geometry.

We define the \emph{non-peripheral curve graph} $\Cnp(\Sigma)$ to be the graph whose vertices are
non-peripheral simple closed curves and whose edges connect disjoint pairs. Our first main structural
result is that, at sufficiently high end-complexity, $\Cnp(\Sigma)$ becomes a useful Masur--Minsky type
object.

\begin{introthm}[Geometry of $\Cnp(\Sigma)$]\label{introthm:Cnp}
Let $\Sigma$ be stable, $\Map(\Sigma)$ be CB-generated, and suppose $\zeta(\Sigma)\ge 5$.
Then $\Cnp(\Sigma)$ is connected and has infinite diameter, $\Map(\Sigma)$ acts on $\Cnp(\Sigma)$ with
unbounded orbits, and $\Cnp(\Sigma)$ is Gromov hyperbolic.
\end{introthm}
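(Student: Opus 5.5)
We will prove the four assertions of Theorem~\ref{introthm:Cnp} in the order connectivity, unbounded orbits (hence infinite diameter), hyperbolicity. The basic tool is a finite-type model. Fix the minimal anchor surface $K_0$ of \cite{MR}; its complementary components are end-neighborhoods. By stability, each such component is either peripheral (it can be displaced off any compact set) or carries a non-peripheral type of end. We expect that $\zeta(\Sigma)$ counts the complementary regions of $K_0$ whose ends are ``maximal'' and non-displaceable. Collapsing each peripheral complementary region to a puncture gives a finite-type surface $S$ in which $K_0$ sits. We will show that a curve $\alpha\subset\Sigma$ is non-peripheral exactly when, after a homeomorphism, it cannot be homotoped into a peripheral end-neighborhood. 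Equivalently, $\alpha$ separates the set $\mathcal{M}$ of maximal ends in an essential way, or has nontrivial intersection with every translate of $K_0$.

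\textbf{Connectivity.} Given non-peripheral $\alpha,\beta$, choose a compact subsurface $K\supseteq K_0$ containing both. Collapse each complementary region of $K$ to a single marked point, remembering which points carry maximal ends; this gives a finite-type surface $S_K$ with at least $\zeta(\Sigma)\ge 5$ ``heavy'' punctures. Non-peripheral curves in $K$ correspond to curves of $S_K$ that do not cut off a disk containing only light punctures. Connectivity of the curve graph of $S_K$ then gives a path from $\alpha$ to $\beta$. We modify it, in the standard way for subgraphs of curve graphs (as for separating-curve graphs), so that every vertex cuts off at least two heavy punctures on each side, or is nonseparating. The hypothesis $\zeta\ge5$ is what guarantees enough room for surgery: a peripheral vertex is replaced by the boundary of a regular neighbourhood of it together with a heavy-puncture arc, which is disjoint from both of its neighbours.

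\textbf{Unbounded orbits and infinite diameter.} We will build a $1$-Lipschitz coarse map from $\Cnp(\Sigma)$ to a hyperbolic graph with a loxodromic element. Let $W$ be the finite-type witness obtained by cutting $\Sigma$ along curves isolating each heavy complementary region of $K_0$. By minimality of $K_0$ and stability, $W$ is non-displaceable. Every non-peripheral curve must meet $W$ essentially, since otherwise it would lie in an end-neighbourhood and could be pushed out. Subsurface projection $\pi_W\colon \Cnp(\Sigma)\to \mathcal{C}(W)$ is then defined on all vertices and is coarsely Lipschitz. Following \cite{HQR}, the $\Map(\Sigma)$-translates of $W$ all overlap, so we may instead project to the Bestvina--Bromberg--Fujiwara quasi-tree built from the orbit of $W$. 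A pseudo-Anosov supported on $W$ acts with unbounded orbits on $\mathcal{C}(W)$, and these orbits are controlled by the projection from $\Cnp$. This gives unbounded orbits, and hence infinite diameter.

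\textbf{Hyperbolicity.} This is the main obstacle, because the graph is locally infinite and the finite-type models $S_K$ vary with $K$. We plan to use the guessing-geodesics criterion of Masur--Schleimer/Bowditch. For $\alpha,\beta$ we take as candidate paths unicorn paths (Hensel--Przytycki--Webb) constructed in $\Sigma$ from arcs between heavy ends, or between $\alpha$ and $\beta$. The key point to verify is that unicorn curves built from non-peripheral curves remain non-peripheral, after the same local surgery as above, at bounded cost. This again uses $\zeta\ge5$, so that every unicorn surgery leaves enough heavy ends on both sides. Once this holds, the uniform slim-triangle property of unicorn paths, which holds independently of the surface, gives uniform hyperbolicity of $\Cnp(\Sigma)$. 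If the direct unicorn argument fails, the fallback is to show that $\Cnp(\Sigma)$ is quasi-isometric to the curve graph of the witness $W$ coned off along the relevant subsurfaces, and to apply Masur--Minsky's hyperbolicity of electrified curve graphs.
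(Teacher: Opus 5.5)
\textbf{Hyperbolicity.} This is the most serious gap. You rest everything on the claim that unicorn curves built from two non-peripheral curves ``remain non-peripheral, after the same local surgery, at bounded cost'', and this is exactly what fails. A unicorn (bicorn) curve made of an arc of $\alpha$ and an arc of $\beta$ can cut off a small end set (e.g.\ part of a single $A\in\calA$), so it is often peripheral. The hypothesis $\zeta(\Sigma)\ge 5$ gives no control over how a single surgery distributes the ends. The Hensel--Przytycki--Webb slim-triangle property says that each unicorn is disjoint from some unicorn on another side. It passes to $\Cnp(\Sigma)$ only if your replacement of peripheral unicorns is coarsely well defined and sends disjoint curves to curves at uniformly bounded $\Cnp$-distance, and nothing in the proposal provides this.

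Your fallback fails too, because $\Cnp(\Sigma)$ sees all translates of $W$ at once. For $g\notin\mathrm{Stab}(W)$ the translate $g(W)$ overlaps $W$. A pseudo-Anosov $\psi$ supported on $g(W)$ has unbounded orbits in $\Cnp(\Sigma)$ (apply your own projection argument to $g(W)$). Yet by the Behrstock inequality $\pi_W(\psi^n\alpha)$ stays near $\pi_W(\partial g(W))$. So no map built from $\pi_W$, coned off or not, is a quasi-isometry.

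The missing idea, which the paper uses, is to do unicorn surgery on \emph{grand arcs}, i.e.\ arcs joining maximal ends of distinct types. Unicorn arcs inherit one endpoint from each input arc, so with endpoints chosen in distinct types they stay in $\mathcal{GA}(\Sigma)$, which is hyperbolic by \cite{BNV}. The non-peripherality problem then never arises. The paper proceeds as follows:
\begin{enumerate}
\item To each grand arc it attaches its small neighbourhood curves. These are non-peripheral because one side carries two maximal types and the other a third; this is where $\zeta(\Sigma)\ge 5$ enters, via Lemma~\ref{lem:zeta-implies-grandarc-nontrivial}.
\item It shows the subgraphs $\mathcal{GA}_\alpha$ of arcs disjoint from $\alpha$ are uniformly quasiconvex, by running the unicorn paths in $\Sigma\setminus\alpha$.
\item It electrifies these subgraphs using \cite[Proposition~2.6]{KR14}.
\item It compares $\Cnp(\Sigma)$ with the electrified graph through the hybrid graphs $\mathcal Y_0$ and $\mathcal Y$.
\end{enumerate}

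\textbf{Connectivity.} Your dictionary between non-peripheral curves and curves of the collapsed model $S_K$ is wrong in two ways.
\begin{enumerate}
\item In infinite genus every nonseparating curve is peripheral: all of them are equivalent and can be pushed out a non-planar end (Lemma~\ref{lem:np-char}(2)). So ``or is nonseparating'' admits peripheral vertices.
\item Counting heavy punctures fails once $K\supsetneq K_0$. Suppose $K$ cuts $\Sigma_A$, for a Cantor-type $A\in\calA$, into several complementary pieces containing points of $E(x)$. A curve in $K\cap\Sigma_A$ surrounding two of them has at least two heavy punctures on each side, yet it is disjoint from $K_0$ and hence peripheral.
\end{enumerate}
The usable criterion is Lemma~\ref{lem:np-char}(1): a separating curve neither of whose complementary end sets is small (mappable into a single $A\in\calA$) is non-peripheral. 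Your repair step is also unjustified. The neighbours of a bad vertex may cross every arc from it to a suitable puncture, and consecutive bad vertices must be fixed simultaneously.

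The paper avoids the finite model and runs Schleimer's surgery induction on $i(\alpha,\gamma)$ directly in $\Sigma$. There $\zeta(\Sigma)\ge5$ enters through Lemma~\ref{lem:small-ends}. If $\gamma\cup\omega$ splits the ends into $X_1,\dots,X_4$ with $X_1\sqcup X_2$ and $X_3\sqcup X_4$ non-small, subadditivity of the end-complexity gives $\sum_i\zeta(X_i)\ge 5$. Hence some $\zeta(X_i)\ge 2$, and the corresponding boundary curve of the four-holed sphere has non-small end sets on both sides.

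\textbf{Unbounded orbits.} By contrast, this part is essentially sound if you simply take $W=K_0$. By Definition~\ref{def2} every non-peripheral curve meets $K_0$ essentially, so $\pi_{K_0}$ is coarsely Lipschitz on $\Cnp(\Sigma)$. A pseudo-Anosov supported on $K_0$ then has an unbounded orbit. The non-displaceability and BBF detour is unnecessary. The paper does not spell this part out.
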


A related study of Qing--Thomas \cite{QT25} (using and adapting the unicorn-path technology of
Hensel--Przytycki--Webb \cite{HPW}) establishes connectedness and hyperbolicity of $\Cnp(\Sigma)$
for some lower-complexity cases; we expect further structural parallels with the curve graph in
finite type.

\subsection*{Questions}
This paper introduces $\Cnp(\Sigma)$ as a basic tool and uses it for coarse rank and divergence. Much
remains to be understood, and we conclude with several questions.

\begin{question}[Lower bounds for divergence]\label{q:lower-div}
When $\zeta(\Sigma)\ge 5$, is the divergence of $\Map(\Sigma)$ bounded \emph{below} by a quadratic
function? Equivalently, does there exist a pair of geodesic rays in $\Map(\Sigma)$ whose divergence
grows at least quadratically?
\end{question}
In the finite-type setting, quadratic lower bounds follow from strong contraction properties of
pseudo-Anosov axes \cite{DR}. It would be interesting to know whether appropriate analogues exist for
big mapping class groups in the range $\zeta(\Sigma)\ge 5$.

\begin{question}[When is $\Cnp(\Sigma)$ hyperbolic?]\label{q:Cnp-hyp-iff}
Find necessary and sufficient topological conditions on $\Sigma$ for $\Cnp(\Sigma)$ to be Gromov
hyperbolic.
\end{question}
When $\Sigma$ has finite genus, it is natural to introduce a combined complexity
\[
\xi(\Sigma):=3g(\Sigma)-3 +\zeta(\Sigma),
\]
which plays the role of the usual finite-type complexity $3g-3+n$. It is plausible that $\xi(\Sigma)$
is the correct parameter for an ``if and only if'' hyperbolicity statement for $\Cnp(\Sigma)$ in the
finite-genus setting, analogous to the finite-type case.

\begin{question}[Quadratic divergence beyond $\zeta\ge 5$]\label{q:Div-finite-genus}
Are there finite-genus surfaces with $\zeta(\Sigma)=4$ for which $\Map(\Sigma)$ has quadratic
divergence? More generally, can one characterize (topologically) when $\Map(\Sigma)$ has quadratic
divergence in terms of $\xi(\Sigma)$?
\end{question}

\begin{question}[Automorphisms of $\Cnp(\Sigma)$]\label{q:AutCnp}
Assume $\Sigma$ is stable, $\Map(\Sigma)$ is CB-generated, and $\zeta(\Sigma)\ge 5$. Is every
simplicial automorphism of $\Cnp(\Sigma)$ induced by an element of $\Map(\Sigma)$? Equivalently, is
$\Aut(\Cnp(\Sigma))\cong \Map(\Sigma)$ in this range?
\end{question}

\subsection*{Organization of the paper.}
In Section~\ref{sec:nondisplaceable} we recall the CB framework, the notion of stable surfaces, and the
anchor-surface technology from \cite{MR}, and we define the end-complexity $\zeta(\Sigma)$.
In Section~\ref{sec:nonperipheral-rank} we introduce peripheral and non-peripheral compact subsurfaces,
develop subsurface-projection length functions, and prove Theorem~\ref{introthm:Coarse} on infinite
coarse rank when $\zeta(\Sigma)\ge 4$.
In Section~\ref{sec:cnp} we define the graph $\Cnp(\Sigma)$, prove connectivity and infinite diameter,
and establish Gromov hyperbolicity, proving Theorem~\ref{introthm:Cnp}.
Finally, in Section~\ref{sec:divergence} we use $\Cnp(\Sigma)$ together with chains of commuting twists
and a ``persistent twist'' detour argument inspired by \cite{DR} to obtain the quadratic upper bound on
divergence (Theorem~\ref{introthmDiv}), and then deduce one-endedness (Corollary~\ref{introcor:one-ended}).

\subsection*{Acknowledgements}
Kasra Rafi is supported by NSERC Discovery grant RGPIN-05507. Yulan Qing is supported by Simons Foundation grant [SFI-MPS-TSM-00014066, Y.Q.]

\section{Preliminaries}

\subsection{Surfaces and mapping class group}
A \emph{surface} $\Sigma$ is a connected $2$-dimensional topological manifold, i.e., a second-countable Hausdorff $2$-dimensional space with no boundary. In this paper, we assume all surfaces to be orientable. The \emph{mapping class group} of $\Sigma$ is defined as the group $\Map(\Sigma)$ of all isotopy classes of orientation-preserving homeomorphisms of $\Sigma$. The group $\Map(\Sigma)$ is equipped with the quotient topology of the compact-open topology on the group $\Homeo^+(\Sigma)$ of all orientation-preserving homeomorphisms of $\Sigma$.

In this paper, a \emph{subsurface} $S$ of a surface $\Sigma$ is a connected closed subset of $\Sigma$ that is a manifold with boundary whose boundary consists of a finite number of pairwise non-intersecting simple closed curves, such that none of these boundary curves bounds a disk or a once-punctured disk in $\Sigma$. A surface $\Sigma$ is \emph{of finite type} if its fundamental group is finitely generated. We always assume that $\Sigma$ is of infinite type. Similarly, a subsurface $S$ of $\Sigma$ is of finite type if its fundamental group is finitely generated.

\subsection{The space of ends}
The space of ends of a surface $\Sigma$ is defined to be the inverse limit of the system of components of complements of compact subsets of $\Sigma$. Intuitively, each end corresponds to a way of leaving every compact subset of $\Sigma$ (see \cite{Ric} for details). Let $\End(\Sigma)$ be the end space of $\Sigma$ and $\End^g(\Sigma)\subseteq \End(\Sigma)$ be the subspace of $\End(\Sigma)$ consisting of non-planar ends. Also, let $\genus(\Sigma)$ be the genus of $\Sigma$ (possibly infinite). By a theorem of Richards \cite{Ric}, connected, orientable surfaces $\Sigma$ are classified up to homeomorphism by the triple $(\genus(\Sigma),\End(\Sigma),\End^g(\Sigma))$.

Given a subsurface $S\subseteq\Sigma$, the space of ends of $S$ is defined similarly and is denoted by $\End(S)$. The embedding of $S$ in $\Sigma$ gives a natural embedding of $\End(S)$ into $\End(\Sigma)$.

Every subsurface $S\subseteq\Sigma$ of finite type determines a finite partition $\Pi_S$ of the ends of $\Sigma$ where each part of the partition is the space of ends of a connected component of $\Sigma - S$. Given two subsets $X,Y\subseteq \End(\Sigma)$, we say that a subsurface $K$ \emph{separates} $X$ and $Y$ if $X$ and $Y$ belong to distinct subsets of the partition $\Pi_K$.

\begin{definition} \label{def:stable}
For $x\in \End(\Sigma)$, we call a neighborhood $U$ of $x$ \emph{stable} if for any smaller neighborhood $U'\subset U$ of $x$, there is a homeomorphism from $U$ to $U'$ fixing $x$. (See \cite[Proposition 3.2]{BDR} for equivalent definitions of a stable neighborhood). We say $\Sigma$ is \emph{stable} if every end $x \in \End(\Sigma)$ is stable.
\end{definition}

In this paper, we always assume the surface $\Sigma$ is stable. For an end $x \in \End(\Sigma)$, we denote the orbit of $x$ by
\[
E(x) = \{ \phi(x) \mid \phi \in \Map(\Sigma) \}.
\]
We say $x$ and $x'$ are of the same type if $x' \in E(x)$. This defines a partial order on the set of ends as follows. We say $x \preceq y$ if $E(x)$ accumulates to $y$. It was shown in \cite{MR} that $x \preceq y$ and $y \preceq x$ implies $x \in E(y)$.

Let $\calM(\End(\Sigma))$ be the set of maximal ends, that is, the set of points $x \in \End(\Sigma)$ such that if $x \preceq y$ then $y \in E(x)$. For a stable surface, $\calM(\End(\Sigma))$ is non-empty and consists of finitely many different types. We refer to elements of $\calM(\End(\Sigma))$ as \emph{maximal ends}. For every maximal end $x$, either $E(x)$ is finite or it is a Cantor set. We refer to these as \emph{maximal ends of finite type} and \emph{maximal ends of Cantor type}, respectively.

\subsection{Curves and curve graphs}
A simple closed curve $\alpha$ on a surface $\Sigma$ is a free homotopy class of an essential simple closed curve. Here, \emph{simple} means $\alpha$ has a representative that does not self-intersect and \emph{essential} means that $\alpha$ does not bound a disk or a once-punctured disk. For a subsurface $S$ of $\Sigma$, a curve $\alpha$ is \emph{in} $S$ if it has a representative that is contained in $S$ and if $\alpha$ is essential in $S$, that is, $\alpha$ is not parallel to a boundary component of $S$.

The curve graph of $\Sigma$, denoted by $\calC(\Sigma)$, is defined \cite{harvey} to be the graph whose vertex set is the set of curves in $\Sigma$ and whose edges are pairs of distinct curves in $\Sigma$ that have disjoint representatives. Similarly, for a finite type surface $S$ of $\Sigma$, let the curve graph of $S$, $\calC(S)$, be the graph whose vertex set is the set of curves in $S$ and whose edges are pairs of distinct curves in $S$ that have disjoint representatives. Then $\calC(S)$ is an induced subgraph of the curve graph $\calC(\Sigma)$.

We equip the curve graph with a metric where every edge has length 1. We denote the associated distance by $d_\Sigma$ and $d_S$, that is, for curves $\alpha$ and $\beta$ in $S$, $d_S(\alpha, \beta)$ is the smallest number $n$ such that there is a sequence
\[
\alpha = \alpha_0, \alpha_1 \dots, \alpha_n = \beta
\]
where $\alpha_i$ and $\alpha_{i+1}$ are disjoint for $i=0, \dots, n-1$.

Recall that, for finite type surfaces $S$, the curve graph $\calC(S)$ has infinite diameter \cite{harvey} and is Gromov hyperbolic \cite{MM2}, and the mapping class group acts on $\calC(S)$ by isometries with unbounded orbits \cite{harvey}. For infinite type surfaces, the curve graph (defined as in the case of finite type surfaces) has bounded diameter \cite{BDR}.

\subsection{Markings on a surface}
We say a set of curves $\{\alpha_i\}$ in $\Sigma$ \emph{fill} a subsurface $S$ if $\alpha_i$ are contained in $S$ and for every curve $\beta$ in $S$, there exists a curve $\alpha_i$ such that
\[\beta \cap \alpha_i \neq \emptyset.\]

A \textit{marking} $\mu_S$ of a finite-type surface $S$ is a collection of simple closed curves
\[
\mu_S=\{\alpha_1,\hdots,\alpha_n, \beta_1,\hdots,\beta_n\}
\]
such that:
\begin{itemize}
 \item The curves $\{\alpha_i\}$ form a pants decomposition of $S$.
 \item $\alpha_i \cap \beta_j = \emptyset$ whenever $i\neq j$.
 \item For any $i$, either $\alpha_i$ and $\beta_i$ fill a punctured torus and $|\alpha_i\cap \beta_i| = 1$, or they fill a four-times punctured sphere and $|\alpha_i\cap \beta_i| = 2$.
\end{itemize}

It follows from the definition that a marking $\mu_S$ fills the subsurface $S$. See \cite{MM2} for more details.

\subsection{Coarse geometry of big mapping class groups}\label{sec:nondisplaceable}
Let $G$ be a Polish topological group. A subset $ A \subset G$ is coarsely bounded, abbreviated CB, if every compatible left-invariant metric on $G$ gives $A$ finite diameter. CB sets behave in many ways like finite sets and, by work of Rosendal \cite{Rosendal}, the theory of uncountable groups that are generated by CB sets resembles the theory of finitely generated groups.

\begin{theorem}[\cite{Rosendal}]\label{thm:Rosendall}
Let $G$ be a Polish group that has both a CB neighborhood of the identity and is generated by a CB subset. Then the identity map is a quasi-isometry between $G$ endowed with any two word metrics associated to symmetric, CB generating sets. We then say $G$ is a CB-generated group.
\end{theorem}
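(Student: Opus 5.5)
This is Rosendal's theorem, cited from \cite{Rosendal}, so the plan is to reproduce the standard Milnor--Schwarz-type argument rather than develop anything new. Let $S_1, S_2$ be symmetric CB generating sets, with word metrics $\rho_1, \rho_2$. By symmetry it suffices to find constants $K, C$ with
\[
\rho_2(g,h) \le K\rho_1(g,h)+C \quad\text{for all } g,h \in G.
\]
Both word metrics are left-invariant, so this reduces to showing that $\rho_2$-lengths of elements of $S_1$ are uniformly bounded. Concretely, we want an $N$ with $S_1 \subset S_2^{N}$. A word of $\rho_1$-length $n$ then has $\rho_2$-length at most $Nn$, so the identity is bi-Lipschitz, in particular a quasi-isometry.

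The key steps, in order, are the following.

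\textbf{Step 1.} Use the CB neighborhood $V$ of the identity to get a compatible left-invariant metric $d$ on $G$ whose coarse structure is controlled by $V$. Concretely, by a Birkhoff--Kakutani construction we choose a decreasing sequence of symmetric identity neighborhoods $V_0 \subset V$ with $V_{k+1}^3 \subset V_k$. Since $G$ is Polish, hence first countable, we may also require the $V_k$ to form a neighborhood base.

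\textbf{Step 2.} Show that a set is CB if and only if it is contained in $(FV)^m$ for some finite $F$ and some $m$. This is Rosendal's characterization in the locally bounded case.
\begin{itemize}
\item The ``if'' direction is easy: $V$ is CB by hypothesis, and finite sets and finite products of CB sets are bounded under any left-invariant metric.
\item The ``only if'' direction is where the real work lies. For a set $A$ not of this form, we build a left-invariant compatible metric that is unbounded on $A$. We do this by defining the word-length-type metric with respect to the generating family $V \cup \{\text{countable dense set with increasing weights}\}$ and checking that it is compatible.
\end{itemize}

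\textbf{Step 3.} Apply Step 2 to the generating set $S_2$. Because $S_2$ generates $G$ and $G$ is Baire, some power $S_2^{k}$ is non-meagre. Its closure therefore has interior, and a Pettis-type argument places an identity neighborhood $W$ inside $S_2^{2k+2}$ or so (after passing to closures via $\overline{A} \subset AW'$ for a small $W'$). Since $S_1$ is CB, Step 2 gives $S_1 \subset (FW)^m$ with $F$ finite. Each element of $F$ lies in some $S_2^{j}$, so $S_1 \subset S_2^N$ for a uniform $N$.

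The main obstacle is Step 2 (together with the Baire-category argument in Step 3 for producing a neighborhood inside a power of $S_2$). This is exactly where the hypothesis that $G$ is locally bounded, i.e.\ has a CB identity neighborhood, is used. I would follow Rosendal's construction of the maximal metric: define
\[
\|g\| = \inf\Big\{\sum_i \ell(s_i) : g = s_1 \cdots s_n\Big\},
\]
where $\ell$ is $1$ on $V$ and grows along an exhausting sequence of finite sets. I would then check continuity at the identity using the $V_k$ from Step 1.
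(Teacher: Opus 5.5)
Your reduction to showing $S_1\subseteq S_2^N$ is the standard route, and so is Steps 1--2, Rosendal's characterization of CB sets as those lying in $(FW)^m$ for finite $F$ and an identity neighbourhood $W$; these are fine in outline. The paper gives no proof of this statement and simply cites \cite{Rosendal}, so I am judging your argument on its own terms. The genuine gap is Step 3. Pettis's theorem gives an identity neighbourhood inside $A^{-1}A$ only when $A$ is non-meagre \emph{and has the Baire property}. An arbitrary symmetric CB generating set $S_2$, and its powers, need not have the Baire property. Your closure fix does not help. From $\overline{S_2^k}$ having interior and $\overline{S_2^k}\subseteq S_2^kW'$ you only get an identity neighbourhood inside $W'^{-1}S_2^{2k}W'$. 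That is circular: you would already need $W'$ inside a power of $S_2$.

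This is not just a presentational gap. For arbitrary symmetric CB generating sets the conclusion is false, so no argument can close it. Take $G=\mathbb{R}$, a Hamel basis $B$, and $S_2=\{qb : b\in B,\ q\in\mathbb{Q},\ |qb|\le 1\}$.
\begin{itemize}
\item $S_2$ is symmetric, contained in $[-1,1]$ (hence CB), and generates $\mathbb{R}$.
\item Every element of $S_2^n$ has Hamel support of size at most $n$.
\item $S_1=[-1,1]$ contains reals with arbitrarily large Hamel support.
\end{itemize}
Hence $S_1\not\subseteq S_2^N$ for every $N$, and $\rho_{S_1},\rho_{S_2}$ are not quasi-isometric.

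To make your proof correct you must add a regularity hypothesis on the generating sets. If $S_2$ is analytic, each $S_2^k$ is analytic, hence has the Baire property. Pettis then gives an identity neighbourhood in $S_2^{2k}$, and your Step 3 goes through. If $S_2$ contains an identity neighbourhood, Step 3 is immediate. The latter is how the paper actually uses the result, since its generating set $\mathcal{G}$ contains the open subgroup $\calV_{K_0}$.
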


In \cite{MR}, Mann and Rafi gave descriptions of stable surfaces $\Sigma$ where $\Map(\Sigma)$ has a CB neighborhood of the identity and surfaces where $\Map(\Sigma)$ is generated by a CB subset. We recall/summarize these statements here (see Proposition 5.5 and Theorem 5.7 in \cite{MR}).

Since the topology of $\Map(\Sigma)$ comes from the compact-open topology on $\Homeo^+(\Sigma)$, a neighborhood of the identity in $\Map(\Sigma)$ can be described as the stabilizer of a compact subsurface $K$. Following \cite{MR}, we define
\begin{equation} \label{eq:VK}
\calV_K = \big\{ \phi \in \Map(\Sigma) \mid \phi(K)=K \quad \text{and} \quad \phi|_K = \text{id} \big\}.
\end{equation}
Then every end of $\Sigma$ is an end of some component of $\Sigma-K$, that is, $K$ partitions $\End(\Sigma)$ into finitely many disjoint subsets. We say a component $S$ of $\Sigma - K$ contains an end $x \in \End(\Sigma)$ if $x \in \End(S)$.

For $\calV_K$ to be CB, $K$ has to be large enough. This can be made precise by examining the way $K$ decomposes $\End(\Sigma)$. Essentially, we need each component of $\Sigma - K$ to contain ends of at most one maximal type. Also, if $x \in \calM(\End(\Sigma))$ and $E(y)$ accumulates to $x$, we need $E(y)$ also to have an accumulation point in some component of $\Sigma - K$ other than the one containing $x$. In particular, for $x \in \calM(\End(\Sigma))$, if $E(x)$ is a Cantor set, since $E(x)$ accumulates to itself, we need there to be at least two components of $\Sigma - K$ that contain points of $E(x)$. (This is to ensure part (iii) of Theorem 5.7 in \cite{MR} holds).

In more detail, there has to exist a surface of finite type $K$ such that the connected components of $\Sigma - K$ partition $\End(\Sigma)$ as
\[
\End(\Sigma) = \bigsqcup_{A \in \mathcal{A}} A \,\,\sqcup\,\, \bigsqcup_{P \in \mathcal{P}} P,
\]
such that the following holds:
\begin{enumerate}
    \item[(i)] Each connected component of $\Sigma - K$ has one or infinitely many ends and zero or infinite genus.
    \item[(ii)] For $A \in \mathcal{A}$, the set of maximal points $\mathcal{M}(A)$ is either a single point or a Cantor set; points in $\mathcal{M}(A)$ are all of the same type and they are maximal in $\End(\Sigma)$. Furthermore,
    \[
    \mathcal{M}(\End(\Sigma)) = \bigsqcup_{A \in \mathcal{A}} \mathcal{M}(A).
    \]
    \item[(iii)] For each $P \in \mathcal{P}$, there exists some $A \in \mathcal{A}$ such that $P$ is homeomorphic to a clopen subset of $A$.
    \item[(iv)] For $y \in \End(\Sigma)$, if $E(y)$ has an accumulation point in $A$, it also has an accumulation point outside of $A$.
\end{enumerate}

We refer to a subsurface $K$ with the above properties as an \emph{anchor surface}. In fact, we fix a minimal anchor surface $K_0$, meaning an anchor surface where the genus and the number of connected components of $\Sigma-K_0$ are minimal.

Let us examine how this minimality can be achieved. If $\Sigma$ has finite genus, then the genus of $K_0$ has to equal the genus of $\Sigma$ because, in this case, every component of $\Sigma- K_0$ has to be genus zero. Otherwise (meaning if the genus of $\Sigma$ is zero or infinite), $K_0$ has genus zero since all the genus can be pushed near some non-planar end of $\Sigma$.

Furthermore, as mentioned before, the set $\calM(\End(\Sigma))$ has finitely many types; some of them are isolated in $\calM(\End(\Sigma))$ and some are Cantor types. In the minimal case, each set of Cantor type maximal ends should appear in exactly two sets in $\calA$ and each isolated point is contained in its own component. Hence, the minimum value for $|\calA|$ is
\begin{equation} \label{eq:A}
|\calA| = 2 \cdot \big(\#\, \text{of Cantor types in } \calM(\End(\Sigma))\big) +
\big(\# \, \text{of isolated points in } \calM(\End(\Sigma))\big).
\end{equation}
If $E(y)$ accumulates to $x \in \calM(\End(\Sigma))$ where $E(x)$ is a Cantor set, then $E(y)$ accumulates to every point in $E(x)$. Hence $E(y)$ has accumulation points in at least two sets in $\calA$. But it is possible that $y$ is not maximal and $E(y)$ accumulates to a single isolated maximal point $x$ which is contained in $A \in \calA$. The set $E(y)$ has to have an accumulation point outside of $A$, but it may not have an accumulation point in any other $A' \neq A$. This could happen, for example, if $\overline{E(y)}$ (the closure of $E(y)$) is a Cantor set and
\[
\overline{E(y)} = E(y) \cup \{ x\}.
\]
Then $E(y)$ accumulates to itself. Or $E(z)$ could accumulate to points in $E(y)$, where $E(y)$ is as above. In this case, $E(y)$ has to appear in some $P \in \calP$. In this case, we say $y$ \emph{uniquely accumulates to the maximal end $x$} or we say $x$ is the \emph{unique maximal accumulation point of $y$}.

Assume $x \in A$ is the unique maximal accumulation point of $E(y)$ and the other (non-maximal) accumulation point of $E(y)$ is in $P$, and that $x' \in A'$ is the unique maximal accumulation point of $E(y')$ and the other accumulation point of $E(y')$ is in $P'$. Then
\[
E(y) \cap A' = \emptyset, \qquad\text{and} \qquad
E(y') \cap A = \emptyset.
\]
Also,
\[
E(y) \cap P' = \emptyset, \qquad\text{and} \qquad
E(y') \cap P = \emptyset,
\]
because if $P$ intersects both $E(y)$ and $E(y')$ then it cannot be contained in any $A'' \in \calA$.
This means, in the minimal case,
\begin{equation}\label{eq:P}
|\calP| =\# \big(\text{isolated maximal ends that are unique maximal accumulation points}\big).
\end{equation}
To summarize, for every maximal end of Cantor type $x$, we divide $E(x)$ between sets $A_x^1$ and $A_x^2$ such that $A_x^1 \sqcup A_x^2$ contains a stable neighborhood of every point in $E(x)$. If $x \in \calM(\End(\Sigma))$ is isolated, we choose a stable neighborhood $A_x$ of $x$. We let $\calA$ be the collection of these sets. For every isolated maximal end $x \in A_x$ that is a unique maximal accumulation point, we form a set $P_x$ containing representatives of all accumulation points of sets $E(y)$ where $E(y)$ uniquely accumulates to the maximal end $x$. Then $\calP$ is the collection of such sets $P_x$. Every other remaining point is not maximal and can be added to some $A \in \calA$ or $P \in \calP$.

\begin{definition}\label{countcomplexity}
The end-complexity $\zeta(\Sigma)$ of an infinite type surface $\Sigma$ is the minimum number of boundary components of an anchor surface. That is,
\begin{align*}
\zeta(\Sigma) &= 2 \cdot \# \big(\text{$E(x)$ where $E(x)\subset \calM(\End(\Sigma))$ is a Cantor set}\big) + \\
& + \# \big( \text{of isolated points in $\calM(\End(\Sigma))$}\big) +\\
&+ \# \big(\text{isolated maximal ends that are unique maximal accumulation points}\big).
\end{align*}
\end{definition}

\begin{example} We now examine this definition in a specific example. 
Let $\Sigma$ be the surface depicted in \figref{fig:5}. The set of maximal ends $\calM(\Sigma)$ 
consists of two isolated points $x_A$ and $x_C$ and a Cantor set of non-planar points. 
The point $x_A$ is an accumulation of point of two different Cantor types, 
$E(y')$ that are accumulated by punctures and $E(y)$ that are not. Since the points
$E(y)$ and $E(y')$ uniquely accumulate to $x_A$ (they do not accumulate to any other maximal end)
the anchor surface $K_0$ has to separate some of $E(y)$ and $E(y')$ from $A$ and place
them in a set $P$. Also, $K_0$ has to separate the Cantor set of non-planar points into two sets 
$B \sqcup B'$. Since $x_C$ is isolated in $\End(\Sigma)$ and the set $C=\{ x_C\}$ contains only 
one point.
    
\begin{figure}[ht]
\setlength{\unitlength}{0.01\linewidth}
\begin{picture}(100, 55) 
\put(13,0){
    \includegraphics[width=0.7\linewidth]{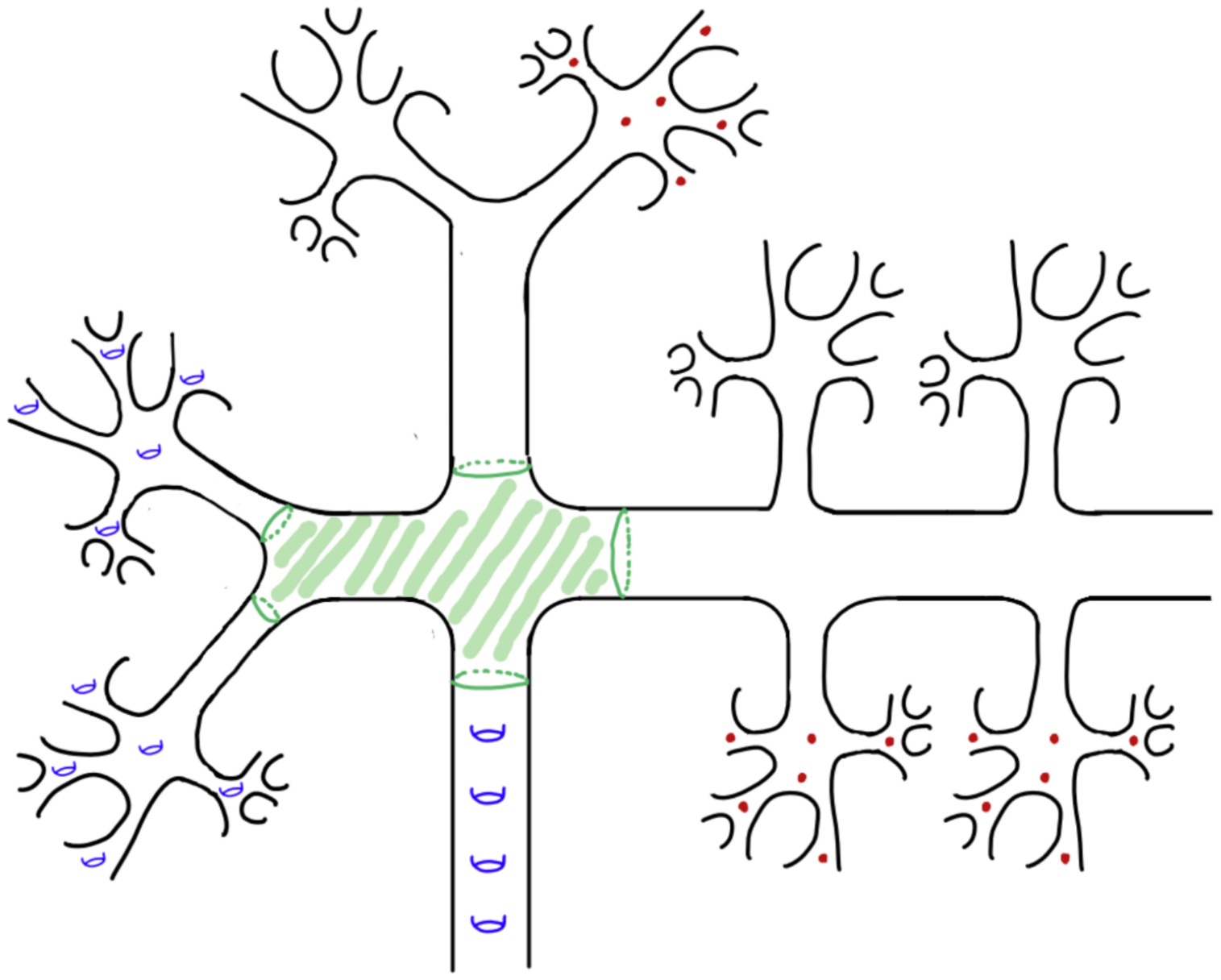}
}
\put(84, 23) {$x_A$}
\put(80, 43) {$A$}
\put(10,  10) {$B'$}
\put(10,  30) {$B$}
\put(16,  6) {$x_{B'}$}
\put(16,  37) {$x_B$}
\put(28,  41) {$y$}
\put(58,  48) {$y'$}
\put(40,  53) {$P$}
\put(40.5,  -1) {$x_C$}
\put(40.5,23) {$K_0$}
\end{picture}
    \caption{A surface $\Sigma$ with $\zeta(\Sigma) = 5$.}
    \label{fig:5}   
\end{figure}

That is, $\calM(\Sigma)$ contains two isolated maximal ends and a Cantor set of maximal ends, and one of the isolated maximal ends is a unique accumulation point. Hence $\zeta(\Sigma) = 5$. 
\end{example}

So far, we have produced a CB neighborhood of the identity $\calV_{K_0}$ in $\Map(\Sigma)$. To find a CB set that generates $\Map(\Sigma)$, we need further assumptions.

\begin{theorem}[\cite{MR}, Theorem 1.6] \label{thm:MR_class}
(Classification of CB generated mapping class groups). For a stable surface $\Sigma$ with locally (but not globally) CB mapping class group, $\Map(\Sigma)$ is CB generated if and only if $\End(\Sigma)$ is finite rank and not of limit type.
\end{theorem}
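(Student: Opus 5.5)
The statement is Mann--Rafi's classification, cited from \cite{MR}, so in this paper it is invoked rather than reproved. If I had to reconstruct the argument, I would follow the two directions separately. Throughout, I would use the CB neighborhood $\calV_{K_0}$ coming from the anchor surface $K_0$ constructed above. Local CB-ness is assumed and global CB-ness is excluded, so such a $K_0$ exists with $\calV_{K_0}$ CB.

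\textbf{Sufficiency.} Assume $\End(\Sigma)$ has finite rank and is not of limit type. I would show that $\Map(\Sigma)$ is generated by $\calV_{K_0}$ together with finitely many further elements. Since $\calV_{K_0}$ is CB, this gives CB-generation.

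The natural generating set consists of three kinds of elements. First, $\calV_{K_0}$ itself. Second, a finite generating set for the mapping class group of a slightly enlarged finite-type surface $K \supset K_0$. Third, finitely many homeomorphisms realizing permutations and exchanges among the finitely many pieces $A \in \calA$ and $P \in \calP$. The exchanges use that each $P$ is homeomorphic to a clopen subset of some $A$, and that stable neighborhoods can absorb one another.

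The key step is then to take an arbitrary $g \in \Map(\Sigma)$ and compose it with generators so that $g(K_0)$ is moved back onto $K_0$, after which the remainder lies in $\calV_{K_0}$. The finite-rank hypothesis ensures there are only finitely many types of partition pieces to match. The ``not limit type'' hypothesis ensures that $g(K_0)$ induces one of finitely many combinatorial patterns of the partition $\Pi_{K_0}$, up to the action of the chosen generators. That in turn lets finitely many ``shuffle'' maps suffice.

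\textbf{Necessity.} I would argue by contraposition. If $\End(\Sigma)$ has infinite rank, or is of limit type, I would construct an unbounded length function, i.e.\ a continuous left-invariant pseudo-metric. The natural candidate counts how many distinct end-types, or nested limit neighborhoods, a compact subsurface must pass through. This quantity grows without bound along $g^n(K_0)$ for suitable $g$. On the other hand, any candidate CB generating set moves it by a bounded amount, which is a contradiction.

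\textbf{Main obstacle.} I expect the hardest part to be the sufficiency direction: controlling $g(K_0)$ by only finitely many generators. This is exactly where the limit-type condition enters, and I would follow \cite[Section 6]{MR} for it. For the purposes of this paper, though, no new proof is needed, and the theorem is simply quoted.
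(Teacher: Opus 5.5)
You are right that the paper does not reprove this result: it is quoted from \cite{MR} and used as a black box. Your reconstruction, however, breaks down in the necessity direction. A length function that is bounded on every candidate CB generating set but grows without bound along $g^n(K_0)$ gives no contradiction. By Remark~\ref{rem:Rosendal-bounded}, every length function is bounded on CB sets anyway. By \cite{Rosendal}, an unbounded continuous one exists exactly when $\Map(\Sigma)$ is not globally CB, which is a standing hypothesis of the theorem.

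In fact your argument proves too much. The twist length function $L_\alpha$ of Section~\ref{sec:divergence} is bounded on the CB generating set $\mathcal{S}$. It also satisfies $L_\alpha(D_\alpha^n)\ge c_\alpha|n|$ (Lemmas~\ref{lem:Lalpha-length} and~\ref{lem:twist-linear}), in a group that \emph{is} CB-generated. So your reasoning would forbid CB-generation for every locally but not globally CB $\Map(\Sigma)$, contradicting the sufficiency half.

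What must be shown is of a different kind. Since $\calV_{K_0}$ is a CB identity neighbourhood, every CB set lies in $(F\calV_{K_0})^k$ for some finite $F$ and some $k$ \cite{Rosendal}. Hence $\Map(\Sigma)$ is CB-generated iff $\Map(\Sigma)=\langle\calV_{K_0}\cup F\rangle$ for some finite $F$. Necessity therefore means showing that $\langle\calV_{K_0}\cup F\rangle$ is a proper open subgroup for \emph{every} finite $F$. Equivalently, $\Map(\Sigma)$ is a countable increasing union of proper open subgroups.

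A witness for this must be insensitive to products, so that boundedness on $\calV_{K_0}\cup F$ propagates to the subgroup it generates. Here is an example of an invariant with that property. Let $T(g)$ be the set of end types $E(z)$ of ends $z$ that $g$ carries from one piece of $\Pi_{K_0}$ to a different piece. Then $T(gh)\subseteq T(g)\cup T(h)$, $T(g^{-1})=T(g)$, and $T(g)=\emptyset$ for $g\in\calV_{K_0}$. Such invariants are bounded on every cyclic subgroup. So the unboundedness that infinite rank or limit type must produce is across infinitely many different elements, involving ever new end types or finer data, not along a single orbit $g^n(K_0)$.

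Your sufficiency sketch also misplaces the hypotheses. Having only finitely many pieces to match is automatic: $K_0$ has finite type, so $\Pi_{K_0}$ has $|\calA|+|\calP|$ pieces regardless of rank.

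More importantly, suppose your ``exchanges'' only permute or swap whole pieces. Then every generator you list maps each piece of $\Pi_{K_0}$ onto a piece. This is because elements of $\Map(K)$ fix every end, and elements of $\calV_{K_0}$ preserve each complementary component. So they generate a proper subgroup as soon as some $W_{A,B}$ or $W_A$ is nonempty.

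The indispensable generators are the shift maps $f_{A,B},f_A$ of Theorem~\ref{thm:generating-set}, translating along the tracks of Lemma~\ref{lem:ends}. As the paper recalls, the absence of limit type is what supplies the clopen fundamental domains $A-N(x_A)$ and the sets $W_{A,B},W_A$ of \cite[Lemma~6.10]{MR}, on which these tracks are built. The real content of sufficiency is showing that the transfer of ends effected by an arbitrary $g$ can be undone by finitely many shifts together with $\calV_{K_0}$ and local moves. That is exactly the step your sketch defers to \cite[Section~6]{MR}.
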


The definition of terms in the above theorem is not relevant to us and we refer the reader to \cite{MR}. Instead, we give a description of a CB generating set assuming $\End(\Sigma)$ is finite rank and not of limit type.

\begin{lemma}[\cite{MR}, Lemma 6.10]
Assume that $\Map(\Sigma)$ is locally CB and that $\End(\Sigma)$ does not have limit type. Then:
\begin{itemize}
\item For every $A \in \calA$, there is a point $x_A \in \calM(A)$ and a neighborhood $N(x_A)\subset A$ containing $x_A$ such that $A-N(x_A)$ contains a representative of every type in $A-\{x_A\}$.
\item For every pair $A,B \in \calA$, there is a clopen set $W_{A,B} \subset A- N(x_A)$ with the property that for any end $z$, $E(z) \cap W_{A,B} \neq \emptyset$ if and only if
\[
E(z) \cap (A - \{ x_A \}) \neq \emptyset \qquad\text{and}\qquad E(z) \cap (B - \{ x_B \}) \neq \emptyset.
\]
\item For every $A \in \calA$, there is a clopen set $W_A \subset (A -N(x_A))$ with the property that for any end $z$, if $E(z) \cap A - \{x_A\} \neq \emptyset$ and, for all $B \neq A$, $E(z) \cap B = \emptyset$, then $E(z) \cap W_A \neq \emptyset$.
\item The sets $W_{A,B}$ and $W_A$ give a decomposition of $A- N(x_A)$:
\[
A- N(x_A) = \left(\bigsqcup_{B \in (\calA - \{A\})} W_{A,B}\right) \sqcup \left(\bigsqcup_{A \in \calA} W_A\right).
\]
\end{itemize}
\end{lemma}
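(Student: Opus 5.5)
The plan is to build the four conclusions of the lemma one after another from the anchor decomposition $\End(\Sigma)=\bigsqcup_{A\in\calA} A\sqcup\bigsqcup_{P\in\calP}P$. The tools are stability of ends and the finite-rank, non-limit-type hypothesis. Fix $A\in\calA$. By (ii), $\calM(A)$ is a single point or a Cantor set of maximal ends of one type; pick $x_A\in\calM(A)$. Since $\Sigma$ is stable, $x_A$ has a nested neighborhood basis of stable neighborhoods, all homeomorphic to one another by homeomorphisms fixing $x_A$.

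For the first bullet, I would use "not of limit type". Suppose every neighborhood $N$ of $x_A$ fails, so $A-N$ misses some type occurring in $A-\{x_A\}$. Then there is a sequence of types $E(z_n)$ whose representatives in $A$ are forced to accumulate only at $x_A$ as $N$ shrinks. This is the limit-type configuration excluded by the hypothesis (as defined in \cite{MR}). Finite rank guarantees there are only finitely many relevant "families" of types to handle. Hence some $N(x_A)$ works, and I would choose it clopen and stable.

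For the remaining bullets, I would define the sets by type-sorting. Let $R_A=A-N(x_A)$, a clopen set that meets every type present in $A-\{x_A\}$. For each type $E(z)$ meeting $R_A$, record the set of $B\neq A$ with $E(z)\cap(B-\{x_B\})\neq\emptyset$.
\begin{itemize}
\item The clopen set $W_{A,B}$ should collect, for each type shared between $A$ and $B$, a clopen piece of $R_A$ containing a representative of that type.
\item $W_A$ should collect the types seen in $A$ but in no other $B$.
\end{itemize}
To make these clopen and disjoint, I would use that $R_A$ is a compact, totally disconnected space. Via finite rank, I would also use that there are finitely many "maximal types" in $R_A$ whose stable neighborhoods contain representatives of all smaller types. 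So it suffices to partition $R_A$ into finitely many clopen sets, each a stable neighborhood of a chosen maximal point of $R_A$ together with what lies below it. Each such block is then assigned to a $W_{A,B}$ or to $W_A$, according to which other $B$ its maximal type appears in. A type appearing in several $B$'s has to be duplicated using stable self-similarity: a stable neighborhood $U$ of $z$ splits as $U\cong U\sqcup U$ near $z$, so $U$ can be cut into several clopen copies, one sent to each $W_{A,B}$. This gives the decomposition in the fourth bullet, with the union taken over blocks covering $R_A$.

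The main obstacle is the "if and only if" in the second bullet. We need $W_{A,B}$ to contain no type that fails to appear in $B-\{x_B\}$, even though the block of a maximal type $z$ in $W_{A,B}$ also contains all types below $z$. Here I would use property (iv) and the structure of accumulation: if $z$ appears in both $A$ and $B$, then every smaller type accumulating to $z$ appears near each copy of $z$, including the copies in $B$. So types below $z$ automatically lie in $B$ as well, which closes the gap. Types lying below $x_A$ only, and not below any such $z$, must be placed in $W_A$; making this assignment consistent is where I expect the careful bookkeeping with finite rank to be needed.
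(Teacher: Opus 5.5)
The paper gives no proof of this statement; it quotes it from Mann--Rafi (their Lemma~6.10), so your sketch has to stand on its own.

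Part of it does. A stable neighborhood of $z$ contains only points whose type is that of $z$ or accumulates on $z$, and a type accumulating on $z$ accumulates on every point of $E(z)$. Hence every point in a small stable neighborhood of a point whose type meets both $A-\{x_A\}$ and $B-\{x_B\}$ again has that property. This gives the ``only if'' half of the second bullet and also the third bullet.

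\textbf{The gap: the ``if'' half of the second bullet.} Your blocks are indexed by the $\preceq$-maximal types of $R_A=A-N(x_A)$, and a block goes into $W_{A,B}$ only when its top type occurs in $B-\{x_B\}$. A shared type can, however, occur in $R_A$ only below unshared top types. For instance, $R_A$ might be a single stable neighborhood of a point whose type lives only in $A$ and some $P$, accumulated by punctures that also occur in $B$. Then your $W_{A,B}$ is empty, although the right-hand side of the equivalence holds.

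Refining the partition does not fix this in general. The set of points of $R_A$ with shared type is open but need not be closed. A clopen $W_{A,B}$ with the required property exists if and only if finitely many shared types dominate all shared types under $\preceq$. For ``only if'', cover the compact set $W_{A,B}$ by finitely many stable neighborhoods. For ``if'', take small disjoint stable neighborhoods in $R_A$ of representatives of the dominating types.

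This finiteness is the real content of the lemma, and it is not automatic. Suppose there are shared types $y_1\prec y_2\prec\cdots$ with no shared common upper bound, say because their limits in $A$ carry genus and those in $B$ do not. Then no such $W_{A,B}$ exists. Ruling this out is exactly where the non-limit-type hypothesis must be used, and your proposal neither states nor proves it.

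\textbf{Three smaller corrections.}
\begin{enumerate}
\item You attribute to finite rank the fact that $R_A$ has finitely many $\preceq$-maximal types. That already follows from compactness of $R_A$ and stability. Finite rank is also not among the hypotheses of the lemma as stated, so you may not invoke it. What compactness does \emph{not} give is the analogous finiteness for the shared types, since that set is not closed.
\item A stable neighborhood does not in general satisfy $U\cong U\sqcup U$; an isolated puncture, or a point with a neighborhood homeomorphic to $\omega^n+1$, are counterexamples. When a type is shared with several $B$, the extra copies must come from shrinking $N(x_A)$. Then $R_A$ contains several of the nested fundamental domains of $A$ at $x_A$, each of which contains every type of $A-\{x_A\}$.
\item Your argument for the first bullet has the right idea. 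But to call the bad configuration ``limit type'' you must match the global definition: a finite-index subgroup, an invariant set $X$, and the behavior of all of $E(z_n)$ near $X$. Controlling only $E(z_n)\cap A$ is not enough.
\end{enumerate}
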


These sets $W_{A,B}$ will allow us to define some shift maps between $A$ and $B$. It follows from \cite[Section 6.4]{MR} that the self-similarity of stable neighborhoods allows us to extend these sets into infinite sequences. Since $A$ is stable and $x_A \in A$, the space $A - \{x_A\}$ consists of infinitely many disjoint copies of the fundamental domain $A - N(x_A)$. Similarly, if $P \in \mathcal{P}$ is associated to $A$, then $P$ is homeomorphic to a clopen subset of $A - N(x_A)$ (by property (iii) of the anchor surface).

We can therefore decompose the entire end space (excluding the maximal ends $x_A$) into disjoint orbits (see also \cite[Proposition 3.2]{BDR}).

\begin{lemma}[Decomposition of Ends] \label{lem:ends}
There exists a decomposition of the space of ends
\[
\End(\Sigma) - \{ x_A \mid A \in \calA \} = \left(\bigsqcup_{A \in \calA} \bigsqcup_{B \in \calA - \{A\}} \mathcal{O}_{A,B}\right) \sqcup \left(\bigsqcup_{A \in \calA} \mathcal{O}_A\right),
\]
where the sets $\mathcal{O}_{A,B}$ and $\mathcal{O}_A$ are unions of disjoint clopen sets indexed by integers:
\begin{itemize}
    \item For every pair of distinct $A, B \in \calA$, the set $\mathcal{O}_{A,B}$ is a disjoint union
    \[
    \mathcal{O}_{A,B} = \bigsqcup_{k \in \mathbb{Z}} W_{A,B}^k,
    \]
    where each $W_{A,B}^k$ is homeomorphic to the set $W_{A,B}$ defined in Lemma 6.10. Furthermore, we can index these sets such that for $k \leq 0$, $W_{A,B}^k \subset A$, and for $k > 0$, $W_{A,B}^k \subset B$.

    \item For every $A \in \calA$, the set $\mathcal{O}_A$ is a disjoint union
    \[
    \mathcal{O}_A = \bigsqcup_{k \in \mathbb{Z}} W_A^k,
    \]
    where each $W_A^k$ is homeomorphic to $W_A$. Furthermore, for $k \leq 0$, $W_A^k \subset A$, and for $k > 0$, the sets $W_A^k$ cover the ends in the sets $P \in \mathcal{P}$ associated with $A$.
\end{itemize}
\end{lemma}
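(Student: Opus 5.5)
The plan is to build the decomposition one orbit at a time from the fundamental domains supplied by Lemma 6.10 of \cite{MR}, using the stability of each $A\in\calA$ to propagate these domains towards $x_A$.

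\textbf{Step 1: self-similar structure of $A-\{x_A\}$.} Fix $A\in\calA$. Since $A$ is a stable neighborhood of $x_A$ and $N(x_A)\subset A$ is a smaller neighborhood, there is a homeomorphism $h_A\colon A\to N(x_A)$ fixing $x_A$. Set $D_A^0=A-N(x_A)$ and $D_A^{j}=h_A^{j}(D_A^0)$ for $j\ge 0$. These are disjoint clopen sets, and
\[
A-\{x_A\}=\bigsqcup_{j\ge 0} D_A^{j}.
\]
Indeed, the nested sets $h_A^j(A)$ shrink to $x_A$. If this is not automatic for an arbitrary $h_A$, I will first shrink $N(x_A)$ along a neighborhood basis and use stability again, or invoke \cite[Proposition 3.2]{BDR}, to choose the $h_A$ so that the images form a basis at $x_A$. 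This step settles the $k\le 0$ half of each orbit: the decomposition of $D_A^0$ into the sets $W_{A,B}$ and $W_A$ from the lemma transports under $h_A^j$, and I set
\[
W_{A,B}^{-j}=h_A^j(W_{A,B}),\qquad W_A^{-j}=h_A^j(W_A).
\]

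\textbf{Step 2: the $k>0$ half.} For $\mathcal{O}_{A,B}$, the sets $W_{B,A}^{-j}\subset B$ are already present. The two-sided characterization of $W_{A,B}$ (types meeting both $A-\{x_A\}$ and $B-\{x_B\}$) is symmetric in $A$ and $B$, so $W_{A,B}$ and $W_{B,A}$ contain the same end types. Using the clopen-set homeomorphism criterion of \cite{MR}, I will argue that $W_{A,B}\cong W_{B,A}$, possibly after replacing both by $W_{A,B}\sqcup W_{B,A}$ on each side. This gives a single bi-infinite sequence: I relabel $W_{B,A}^{-j}$ as $W_{A,B}^{j+1}$ and identify $\mathcal{O}_{A,B}$ with $\mathcal{O}_{B,A}$, which is the only consistent reading of the statement.

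For $\mathcal{O}_A$, the types in $W_A$ do not meet any other $B\in\calA$, so by anchor property (iv) their remaining accumulation lies in the sets $P\in\calP$ associated with $A$. By property (iii), each such $P$ is homeomorphic to a clopen subset of $A-N(x_A)$. The aim is then to decompose $P$ as $\bigsqcup_{k>0}W_A^k$ with each $W_A^k\cong W_A$. When $A$ has no associated $P$, the classes of $W_A$ are absorbed as in the minimal-anchor discussion, and $W_A$ can be taken empty or merged into the sets $W_{A,B}$.

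\textbf{Step 3: bookkeeping.} Finally I check that every end outside $\{x_A\}$ lies in exactly one piece. Ends in the $A$'s are covered by Step 1. Ends in the $P$'s are covered by Step 2, since $\End(\Sigma)=\bigsqcup\calA\sqcup\bigsqcup\calP$.

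\textbf{Main obstacle.} The hard part is the $k>0$ portion of $\mathcal{O}_A$: showing that $P$ splits into infinitely many copies of $W_A$. My first attempt will use the fact that $P$ is itself self-similar toward the non-maximal accumulation points of $E(y)$. Failing that, I would enlarge $W_A$ to include a copy of $P$'s fundamental piece and apply the \cite{MR} criterion that two clopen sets containing the same types, with the same maximal ends, are homeomorphic, to match all the pieces.
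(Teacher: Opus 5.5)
Your Step 1 matches the paper. The paper's justification is only the remark that stability makes $A-\{x_A\}$ a union of infinitely many copies of $A-N(x_A)$, together with property (iii) for the sets $P$ and a pointer to \cite[Section~6.4]{MR} and \cite[Proposition~3.2]{BDR}. You are also right that $h_A$ must be chosen so that the sets $h_A^j(A)$ shrink to $x_A$.

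The genuine gap is the step you flagged as the main obstacle, and it is not merely hard but impossible. For no choice of $W_A$ can you write $P=\bigsqcup_{k>0}W_A^k$ with the $W_A^k$ nonempty and clopen. The set $P$ is clopen in the compact space $\End(\Sigma)$, hence compact. The $W_A^k$ would then be an open cover of $P$ by pairwise disjoint nonempty sets, and such a cover has no finite subcover. Equivalently, infinitely many $W_A^k$ inside $P$ must accumulate at some end $p\in P$. Every clopen set containing $p$ meets infinitely many of them, so $p$ lies in no piece of the decomposition, and your Step 3 (``ends in the $P$'s are covered by Step 2'') fails at $p$. Neither fallback helps. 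Self-similarity of $P$ toward a non-maximal accumulation point $p$ decomposes $P-\{p\}$ rather than $P$. Enlarging $W_A$, or invoking a homeomorphism criterion, does not touch the compactness obstruction.

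So, read with $W_A^k\subset P$ for $k>0$ (as the description of $f_A$ as shifting ends from $A$ into $P$ intends), the $\mathcal{O}_A$ clause cannot hold as stated, and the paper's sketch passes over this. The ends of $P$ where the positive half of $\mathcal{O}_A$ accumulates are fixed by $f_A$. They must be removed from $\End(\Sigma)$ along with the $x_A$, which play the same role as $k\to-\infty$.

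A smaller point concerns $\mathcal{O}_{A,B}$. Lemma 6.10 only specifies which types meet $W_{A,B}$ and $W_{B,A}$, not their homeomorphism types, so $W_{A,B}\cong W_{B,A}$ does not follow. The tool you need is absorption:
\begin{itemize}
\item every type meeting $W_{A,B}$ meets $B-\{x_B\}$, and hence accumulates at $x_B$;
\item a Hilbert-hotel argument along the copies of $B-N(x_B)$ converging to $x_B$ then gives $N\sqcup W_{A,B}\cong N$ for a stable neighbourhood $N$ of $x_B$;
\item this lets you re-choose the fundamental domain of $B$ so that the pieces $W_{A,B}^k$ with $k>0$ are copies of $W_{A,B}$ itself.
\end{itemize}
Finally, identifying $\mathcal{O}_{A,B}$ with $\mathcal{O}_{B,A}$ conflicts with the disjoint union over ordered pairs. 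Either index by unordered pairs or split the copies between the two orbits.
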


With this decomposition fixed, we can now define the elements of our generating set.

\subsection{The generating set}\label{generatingset}
We define a finite set of mapping classes $\mathcal{G}$ consisting of shift maps, maximal end permutations, and local generators.

\subsubsection*{Shift maps}
The decomposition above provides natural tracks along which we can shift the ends of the surface.
\begin{itemize}
    \item \textbf{Inter-region shifts ($f_{A,B}$):} For every distinct pair $A, B \in \calA$, let $f_{A,B}$ be a homeomorphism supported on a subsurface containing $\mathcal{O}_{A,B}$ that satisfies:
    \[
    f_{A,B}(W_{A,B}^k) = W_{A,B}^{k+1} \quad \text{for all } k \in \mathbb{Z},
    \]
    and is the identity everywhere else. This map effectively shifts a copy of the shared types $W_{A,B}$ from the region $A$ into the region $B$.

    \item \textbf{Unique accumulation shifts ($f_A$):} For every $A \in \calA$ such that $W_A$ is non-empty (i.e., there are $P$ sets associated with $A$), let $f_A$ be a homeomorphism supported on a subsurface containing $\mathcal{O}_A$ that satisfies:
    \[
    f_A(W_A^k) = W_A^{k+1} \quad \text{for all } k \in \mathbb{Z},
    \]
    and is the identity everywhere else. This map shifts ends from $A$ into the corresponding sets $P \in \calP$.
\end{itemize}

\subsubsection*{Maximal end permutations}
Let $\text{Sym}(\calA)$ be the group of permutations of the set $\calA$. We consider the subgroup of permutations $\sigma$ such that for all $A \in \calA$, the maximal end $x_A$ is of the same topological type as $x_{\sigma(A)}$.
For each such $\sigma$, we fix a homeomorphism $h_\sigma$ such that:
\[
h_\sigma(A) = \sigma(A) \quad \text{and} \quad h_\sigma(x_A) = x_{\sigma(A)}.
\]
We include a finite set of such maps generating this subgroup.

\subsubsection*{Local generators}
Finally, we need to generate the mapping class group of the anchor surface. Let $K \supseteq K_0$ be a compact subsurface of finite type large enough to contain the boundaries of the shift maps' supports restricted to the "0-level" (specifically, the boundaries separating $W^0$ and $W^1$ for all shift maps).
Let $\mathcal{L}$ be a standard finite generating set for $\Map(K)$, consisting of:
\begin{itemize}
    \item Dehn twists along a finite set of simple closed curves in $K$.
    \item If $K$ has boundary components that are permutable, half-twists braiding these boundary components.
\end{itemize}

\begin{theorem} \label{thm:generating-set}
Let $\Sigma$ be a stable surface of infinite type and let $K_0$ be the minimal anchor surface we fixed above. Then the set
\[
\mathcal{G} = \{ f_{A,B} \mid A,B \in \calA, A \neq B \}
\cup \{ f_A \mid A \in \calA \} \cup \{ h_\sigma \} \cup \mathcal{L}
\cup \calV_{K_0}
\]
is a coarsely bounded (CB) generating set for $\Map(\Sigma)$.
\end{theorem}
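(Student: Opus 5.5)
We need to show two things: that $\mathcal{G}$ is coarsely bounded, and that it generates $\Map(\Sigma)$.

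\textbf{Coarse boundedness.} Since $K_0$ is an anchor surface, $\calV_{K_0}$ is CB by the Mann--Rafi criterion (conditions (i)--(iv), \cite[Theorem 5.7]{MR}). The remaining elements $f_{A,B}$, $f_A$, $h_\sigma$ and $\mathcal{L}$ form a finite set. A finite set is CB, and a finite union of CB sets is CB. So $\mathcal{G}$ is CB, and this part is immediate.

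\textbf{Generation.} Let $H=\langle \mathcal{G}\rangle$ and take $\phi\in\Map(\Sigma)$. We reduce $\phi$ to the identity by multiplying with elements of $H$.

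\emph{Step 1: restore the maximal ends.} $\phi$ permutes $\calM(\End(\Sigma))$ and preserves end types. Composing with a suitable $h_\sigma^{-1}$, we may assume $\phi(x_A)\in A$ for every $A$.

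\emph{Step 2: restore the partition.} Next we arrange $\phi(\partial K_0)$ so that $\phi$ maps each complementary component of $K_0$ to one with the same end set. The end partition $\Pi_{\phi(K_0)}$ differs from $\Pi_{K_0}$ by finitely many clopen pieces migrating between regions. By Lemma~\ref{lem:ends}, any clopen set of ends moving from $A$ to $B$ is, up to homeomorphism inside the stable neighborhoods, a finite union of copies of $W_{A,B}^k$. Likewise, migration between $A$ and its $P$ is a finite union of copies of $W_A^k$. Composing with powers $f_{A,B}^{n}$ and $f_A^{n}$ corrects the number of copies exchanged. Here we use self-similarity and stability of the neighborhoods: a clopen subset of $A-N(x_A)$ can be absorbed into $A$ by a homeomorphism supported in $A$, which is realized by an element of $\calV_{K_0}$.

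\emph{Step 3: finish inside $K$.} After Steps 1 and 2, $\phi(K_0)$ induces the same partition as $K_0$ with matching genus. By the classification of surfaces, there is $v\in\calV_{K_0}$-conjugated data, namely an element supported near $K_0$ and one supported off it, such that $\phi$ equals an element of $\Map(K)$ times an element of $\calV_{K_0}$. The relevant facts are that $\Map(K)$ is generated by $\mathcal{L}$, that the stabilizer of $K$ is generated by $\Map(K)$ together with $\calV_K\subset\calV_{K_0}$, and that $K\supseteq K_0$ gives the inclusion $\calV_K\subset\calV_{K_0}$. The half-twists in $\mathcal{L}$ handle boundary permutations.

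\textbf{Main obstacle.} The hard part is Step 2, the bookkeeping showing that the discrepancy between $\Pi_{\phi(K_0)}$ and $\Pi_{K_0}$ is exactly a finite integer combination of fundamental domains $W_{A,B}$ and $W_A$. We must also rule out residual differences, which would be "limit type" phenomena. This is precisely where finite rank and not limit type enter, via the decomposition of Lemma~\ref{lem:ends}. I would follow the flux-counting argument of \cite[Section 6]{MR}, assigning to $\phi$ an integer vector of net shifts across each track $\mathcal{O}_{A,B}$ and $\mathcal{O}_A$. I would then show that this vector is well defined modulo $\calV_{K_0}$ and is killed by the corresponding product of shifts.
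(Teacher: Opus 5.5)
The paper gives no proof of this statement. It is imported from Mann--Rafi \cite[\S6]{MR} and tacitly needs their hypotheses (locally CB, finite rank, not of limit type), which you correctly flag. Your coarse-boundedness paragraph is fine, and the overall outline (permute maximal ends, cancel migration with shifts, finish in a stabilizer) is reasonable.

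\textbf{The main gap is Step~3.} You claim that once $\phi(K_0)$ induces the same end partition as $K_0$, $\phi$ lies in $\langle\mathcal{L}\rangle\cdot\calV_{K_0}$. This is false.
\begin{itemize}
\item Take the bi-infinite cylinder with punctures $p_n$, $n\in\ZZ$. Here $K_0$ is an annulus and $\calA=\{A,B\}$.
\item Fix $K$, and let $c$ enclose $p_{-N},\dots,p_N$ with $N$ so large that $c$ exits $K$ on both sides of $K_0$.
\item $T_c$ fixes every end, so Steps~1--2 leave it untouched.
\item Every element of $\langle\mathcal{L}\rangle\calV_{K_0}$ preserves $K$ and fixes $\partial_A\subset K_0$, so it sends $\partial_A$ into $K$.
\item But $i(T_c(\partial_A),e)\ge 4$ for a curve $e\subset\Sigma- K$ crossing $c$ twice, so $T_c(\partial_A)$ cannot lie in $K$.
\end{itemize}
So the last step does not follow from the classification of surfaces. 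Two things are missing:
\begin{itemize}
\item[(a)] Once all fluxes vanish, build a compactly supported $\psi$ with $\psi(K_0)=\phi(K_0)$. Then $\psi^{-1}\phi$ stabilizes $K_0$ and can be handled by $\mathcal{L}$, the $h_\sigma$ and $\calV_{K_0}$.
\item[(b)] Above all, prove a lemma that $\langle \calV_{K_0},\mathcal{L}\rangle$ contains every compactly supported mapping class. One route is induction on intersection with $\partial K_0$, using $T_{h(c)}=hT_ch^{-1}$: show each Dehn twist or half-twist is conjugate, inside this subgroup, to one supported in $K$ or in $\Sigma- K_0$.
\end{itemize}
This lemma is where ``$K$ large enough'' is actually used, and your argument never uses it.

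\textbf{Genus is not controlled.} ``With matching genus'' does not follow from Steps~1--2, which only move ends. Suppose two regions have maximal ends accumulated by genus but $W_{A,B}$ carries no genus; for example the ladder surface, where $W_{A,B}=\emptyset$. Then there is a genus-flux homomorphism from the pure mapping class group onto $\ZZ$ that kills $\calV_{K_0}$ and every compactly supported class. You need an explicit genus-correction step, and you must say which generators realize this flux. The natural choice is a handle shift, which is how $f_{A,B}$ has to be read in that situation.

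\textbf{Two smaller points.}
\begin{itemize}
\item In Step~2, migrating clopen sets are not in general finite unions of copies of $W^k_{A,B}$. For instance, a single puncture may migrate when $W_{A,B}$ is a puncture plus a Cantor set. Also, elements of $\calV_{K_0}$ cannot move ends between regions. The mechanism is to apply a suitable power of $f_{A,B}$ or $f_A$, then absorb the remainder using stability and the defining property of $W_{A,B}$. There is no well-defined integer flux vector in general.
\item In Step~1, for a Cantor type a permutation $h_\sigma$ alone cannot always place $\phi(x_A)$ in $A$, since $\phi(x_A)$ and $\phi(x_{A'})$ may both land in $A$. You also need $f_{A,A'}$, which works because $W_{A,A'}$ contains maximal points of that type.
\end{itemize}
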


\section{Non-peripheral subsurfaces and the coarse rank of the mapping class group}\label{sec:nonperipheral-rank}

In this section we introduce the notion of \emph{non-peripheral} compact
subsurfaces of an infinite-type surface $\Sigma$, and use them to produce
quasi-flats in $\Map(\Sigma)$ when $\Map(\Sigma)$ is CB-generated (see
Section~\ref{sec:nondisplaceable}). The main result is
Theorem~\ref{thm:infinite-rank}, which states that, when $\zeta(\Sigma) \geq 4$, $\Map(\Sigma)$ has infinite
coarse rank.

\subsection{Peripheral and non-peripheral subsurfaces}\label{subsec:nonperipheral}

For finite-type surfaces, a curve is called peripheral if it bounds a once-punctured disk (or equivalently, if it can be pushed into a cusp). For infinite-type surfaces there is no canonical finite set of cusps. Our definition is a direct analogue: a compact subsurface is peripheral if it can be pushed away from \emph{every} compact region.

\begin{definition}[Peripheral and non-peripheral subsurfaces]\label{def1}
A compact subsurface $R\subset \Sigma$ is \emph{peripheral} if for every compact
subsurface $K\subset \Sigma$ there exists $g\in \Map(\Sigma)$ such that
\[
g(R)\cap K=\emptyset.
\]
Otherwise, $R$ is \emph{non-peripheral}.
\end{definition}

Recall that in Section~\ref{sec:nondisplaceable} we fixed a minimal anchor surface
$K_0$ (a compact subsurface of finite type) with the properties described there.
The next lemma shows that to test peripheralness it suffices to test disjointness
from $K_0$.

\begin{lemma}\label{lem:intersect-anchor}
A compact subsurface $R\subset \Sigma$ is peripheral if and only if there exists
$g\in \Map(\Sigma)$ such that
\[
g(R)\cap K_0=\emptyset.
\]
\end{lemma}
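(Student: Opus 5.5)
One direction is immediate: if $R$ is peripheral, then applying Definition~\ref{def1} with $K=K_0$ gives some $g\in\Map(\Sigma)$ with $g(R)\cap K_0=\emptyset$. The real content is the converse. Assume $g_0(R)\cap K_0=\emptyset$ for some $g_0$, and let $K\subset\Sigma$ be an arbitrary compact subsurface. We need some $g$ with $g(R)\cap K=\emptyset$. Replacing $R$ by $g_0(R)$, we may assume $R\cap K_0=\emptyset$. Since $R$ is connected, it then lies in a single component $S$ of $\Sigma-K_0$. Enlarging $K$ only makes the task harder, so we may also assume $K\supseteq K_0\cup R$.

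The plan is to use the CB structure of the neighborhood $\calV_{K_0}$ from Section~\ref{sec:nondisplaceable}. This group consists of mapping classes fixing $K_0$ pointwise, so it acts on each complementary component $S$ independently. The key step is the following.

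\begin{quote}
\emph{Claim.} For every compact $K'\subset S$ there is $h\in\calV_{K_0}$ with $h(R)\cap K'=\emptyset$.
\end{quote}

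Granting the Claim, apply it with $K'=K\cap S$ (after shrinking $K$ to a compact subsurface containing $K\cap S$). The resulting $h$ preserves $S$ and moves $R$ off $K\cap S$. Because $h(R)\subset S$, the image $h(R)$ is then disjoint from all of $K$. We may take $g=h\circ g_0$ in the original normalization.

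To prove the Claim, I would use the structure of $S$ coming from the anchor conditions (i)--(iii). The component $S$ has one end or infinitely many ends, and genus zero or infinite. Its end set is some $A\in\calA$, or some $P\in\calP$, which by (iii) is homeomorphic to a clopen subset of some $A$. In the case $\End(S)=A$, stability of $A$ at $x_A$ and self-similarity give the following. For every neighborhood $U\subset A$ of $x_A$, there is a homeomorphism of $S$, fixing $\partial S$, that maps the complement of a neighborhood of $\partial S$ into the end region $U$; this is the stable-neighborhood homeomorphism $A\to U'$ realized on the surface, as in \cite{MR} and \cite[Proposition 3.2]{BDR}. Choosing $U$ so that the corresponding end region misses $K'$, and choosing the collar of $\partial S$ to miss $R$ (which is compact in the interior of $S$), we push $R$ into the end region, past $K'$. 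If $S$ has a single end (isolated maximal end, possibly with genus), this is the standard fact that compact sets can be pushed out toward the end. In the $P$ case, I would use the shift $f_A$ from Section~\ref{generatingset}, or directly the fact that $P$ is a union of infinitely many homeomorphic copies $W_A^k$ for $k>0$. This shows that $S$ is a ``tail'' that self-embeds past any compact set while fixing $\partial S$.

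The main obstacle is making the Claim rigorous in the $P$ case and in the Cantor case. There the stable neighborhood is about a single point $x_A$, while $\End(S)$ is a larger clopen set, so we need a homeomorphism of $S$ fixing $\partial S$ whose image of $R$ avoids $K'$. I would first try the following approach. Write $\End(S)=N(x_A)\sqcup(\text{finitely many } W\text{-pieces})$ as in Lemma~\ref{lem:ends}, and note that $R$ separates only finitely many ends from $\partial S$. Then use the shift maps $f_{A,B}$, $f_A$ together with the stable-neighborhood homeomorphism to move the finite-type piece containing $R$ deep into the sequence $W^k$ or into $N(x_A)$. If this becomes awkward, a fallback is to quote the fact from \cite{MR} that $\calV_{K_0}$ is CB precisely because each complementary component is self-similar relative to its boundary, which is exactly the displacement property needed here.
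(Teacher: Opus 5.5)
Your reduction (normalizing to $R\cap K_0=\emptyset$, placing $R$ in one component $S$ of $\Sigma-K_0$, enlarging $K$) is fine. The Claim it rests on, however, is false: in general $R$ cannot be pushed off large compact sets by elements of $\calV_{K_0}$.

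Every $h\in\calV_{K_0}$ restricts to a homeomorphism of $S$ fixing $\partial S$. So it induces a \emph{bijection} of $\End(S)$, and it maps the component of $S-R$ containing $\partial S$ to the component of $S-h(R)$ containing $\partial S$. In particular, the map you invoke cannot exist: a self-homeomorphism of $S$ fixing $\partial S$ cannot send the complement of a collar, whose end set is all of $A$, into an end region with end set $U\subsetneq A$.

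Here is a concrete counterexample.
\begin{enumerate}
\item Let $\End(\Sigma)$ be five planar copies of $\omega+1$: five isolated maximal ends $x_1,\dots,x_5$, each a limit of punctures. Then $\zeta(\Sigma)=5$, $K_0$ is a five-holed sphere, and $S$ is a disk minus $\{p_1,p_2,\dots\}\cup\{x_1\}$ with $p_i\to x_1$.
\item Let $R$ be an annulus whose core cuts off $\partial S$ together with exactly $p_1,p_2$. For every $h\in\calV_{K_0}$, the core of $h(R)$ still has exactly two punctures on its $\partial S$-side.
\item Take $K'\subset S$ compact, containing $\partial S$, whose complementary regions in $S$ are twice-punctured disks around $\{p_1,p_2\},\dots,\{p_9,p_{10}\}$ and one region containing $p_{11},p_{12},\dots$. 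Any curve in $S-K'$ that is essential in $\Sigma$ has either infinitely many or at least ten punctures on its $\partial S$-side.
\end{enumerate}
So no $h\in\calV_{K_0}$ moves $R$ off $K'$, even though $R\cap K_0=\emptyset$ and $R$ is in fact peripheral. Your aside that $R$ cuts off only finitely many ends from $\partial S$ also fails in this example.

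The missing idea is that ends must be moved \emph{across} $K_0$, out of $S$ and into other complementary components. No element of $\calV_{K_0}$ can do this; it is exactly the role of the shift maps $f_{A,B}$ and $f_A$ built from Lemma~\ref{lem:ends}. Two consequences for your proposal:
\begin{enumerate}
\item Your own appeal to $f_A$ in the $P$-case already leaves $\calV_{K_0}$, so it is inconsistent with the Claim as stated.
\item Your fallback does not help. Coarse boundedness of $\calV_{K_0}$ says nothing about displacing $R$ by elements of $\calV_{K_0}$ themselves, and the example above rules such displacement out.
\end{enumerate}

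The paper's route is to take $s$ a word in the shift maps with $\End(s^m(\Sigma_A))\subset N(x_A)$ for large $m$. Then the entire component $s^m(\Sigma_A)$, and hence $s^m g_0(R)$, misses any given compact $K$. If you replace your Claim by this statement, letting $h$ range over all of $\Map(\Sigma)$ and pushing the whole component $S$ off $K$, your argument is repaired and becomes essentially the paper's.
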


\begin{proof}
If $R$ is peripheral, apply Definition~\ref{def1} with $K=K_0$ to obtain some
$g\in\Map(\Sigma)$ with $g(R)\cap K_0=\emptyset$.

Conversely, suppose there exists $g_0\in\Map(\Sigma)$ such that $g_0(R)\cap K_0=\emptyset$.
Let $K\subset \Sigma$ be any compact subsurface. We will find $h\in\Map(\Sigma)$ such that
$h(R)\cap K=\emptyset$.

Since $K_0$ is an anchor surface (Section~\ref{sec:nondisplaceable}), the components of
$\Sigma-K_0$ correspond to the clopen sets in the partition
\[
\End(\Sigma)=\bigsqcup_{A\in\calA}A\ \sqcup\ \bigsqcup_{P\in\calP}P
\]
from Section~\ref{sec:nondisplaceable}. Let $\Sigma_A$ be the component of $\Sigma-K_0$
containing $g_0(R)$, where $A\in\calA$ is the corresponding end space.
By Lemma~\ref{lem:ends} (Section~\ref{generatingset}) and the definition of the
shift maps, there exists a mapping class $s\in\Map(\Sigma)$
which is a word in the shift maps $\{f_A\}$ and $\{f_{A,B}\}$ and has the following property:
for a fixed stable neighborhood $N(x_A)\subset A$ (as in Lemma~6.10 of \cite{MR} recalled in
Section~\ref{sec:nondisplaceable}), we have
\[
\End\big(s^m(\Sigma_A)\big)\ \subset\ N(x_A)
\quad\text{for all sufficiently large }m,
\]
and hence $s^m(\Sigma_A)$ eventually leaves every compact subset of $\Sigma$.
In particular, there exists $m$ such that $s^m(\Sigma_A)\cap K=\emptyset$.

Set $h=s^m\circ g_0$. Then $h(R)\subset s^m(\Sigma_A)$, so $h(R)\cap K=\emptyset$.
Since $K$ was arbitrary, Definition~\ref{def1} implies that $R$ is peripheral.
\end{proof}

Thus we obtain a convenient equivalent characterization.

\begin{definition}[Non-peripheral subsurfaces]\label{def2}
A compact subsurface $R\subset \Sigma$ is non-peripheral if and only if for every
$g\in \Map(\Sigma)$ we have
\[
g(R)\cap K_0\neq \emptyset.
\]
\end{definition}

\begin{remark}\label{rem:nondisplaceable}
Recall from \cite{MR} that a compact subsurface $R$ is \emph{non-displaceable} if
$R\cap g(R)\neq \emptyset$ for every $g\in\Map(\Sigma)$ (see also
Section~\ref{sec:nondisplaceable}). Non-displaceable subsurfaces are a basic source of
unbounded coarse geometry for big mapping class groups (e.g.\ \cite[Proposition 2.8]{MR}).
Every non-displaceable subsurface is non-peripheral, but non-peripheral subsurfaces
form a larger class and will be more flexible for producing quasi-flats.
\end{remark}

\subsection{Coarse rank and quasi-flats}\label{Sec:rank}

As discussed in Section~\ref{sec:nondisplaceable}, a CB-generated Polish group
admits a well-defined quasi-isometry type of word metrics coming from symmetric
CB generating sets \cite{Rosendal}. Following \cite{GRV}, we use this to define
a rank notion for big mapping class groups.

\begin{definition}[Coarse rank {\cite{GRV}}]\label{def:coarse-rank}
Assume $\Map(\Sigma)$ is CB-generated (Section~\ref{sec:nondisplaceable}). Equip
$\Map(\Sigma)$ with a word metric $d_{\mathcal{G}}$ associated to any symmetric
CB generating set $\mathcal{G}$ (e.g.\ the set from Theorem~\ref{thm:generating-set}).
The \emph{coarse rank} of $\Map(\Sigma)$ is the largest $n\ge 0$ for which there exists a
quasi-isometric embedding $\ZZ^n \hookrightarrow (\Map(\Sigma),d_{\mathcal{G}})$.
If such $n$ is unbounded, we say $\Map(\Sigma)$ has \emph{infinite coarse rank}.
\end{definition}

\subsection{Length functions and subsurface projection distances}\label{lf}

A convenient way to obtain lower bounds on word metrics is via length functions.

\begin{definition}\label{Def:length_function}
Let $G$ be a topological group. A \emph{length function} on $G$ is a function
$\ell\colon G\to [0,\infty)$ such that
\begin{itemize}
    \item $\ell(\id)=0$;
    \item $\ell(g)=\ell(g^{-1})$ for all $g\in G$;
    \item $\ell(gh)\le \ell(g)+\ell(h)$ for all $g,h\in G$.
\end{itemize}
\end{definition}

\begin{remark}\label{rem:Rosendal-bounded}
By \cite[Proposition 2.7(5)]{Rosendal}, every length function on a Polish group is
bounded on every coarsely bounded (CB) subset (see Section~\ref{sec:nondisplaceable}).
In particular, if $\mathcal{G}$ is a CB generating set for $G$, then
$\sup_{s\in \mathcal{G}}\ell(s)<\infty$ and $\ell$ gives a uniform lower bound for the word metric.
\end{remark}

We now recall subsurface projections and the associated projection distances in curve
graphs. These constructions go back to Masur--Minsky and are standard; we follow
\cite{MM2}. Curve graphs and their metrics were defined in Section~\ref{sec:nondisplaceable}.

\subsection*{Subsurface projections and projection distances}
In this subsection we recall the coarse definition of the projection $\pi_S$ to a
non-annular subsurface $S$ and the induced projection distance.

Let $S$ be a finite-type subsurface of $\Sigma$ (possibly an annulus). For a curve
$\alpha$ on $\Sigma$ (Section~\ref{sec:nondisplaceable}), the \emph{subsurface
projection} $\pi_S(\alpha)$ is defined whenever $\alpha$ intersects $S$ essentially.

\begin{definition}[Subsurface projection for non-annular subsurfaces]\label{def:subsurface-projection}
Assume $S$ is a finite-type subsurface which is \emph{not} an annulus.
If a curve $\alpha$ has a representative intersecting $S$ essentially, put $\alpha$
and $\partial S$ in minimal position and consider the collection of essential arcs
of $\alpha\cap S$. For each such arc, perform surgery with $\partial S$ (equivalently:
take the boundary components of a regular neighborhood of the arc together with $\partial S$)
to obtain a finite set of essential curves in $S$. Define $\pi_S(\alpha)\subset \calC(S)$
to be the union of these curves. If $\alpha$ is disjoint from $S$ (or $\alpha\cap S$ has no
essential arc component), we set $\pi_S(\alpha)=\emptyset$.

If $P$ is a multicurve (e.g.\ $P=\partial R$ for some subsurface $R$), define
\[
\pi_S(P)\ :=\ \bigcup_{\alpha\in P}\pi_S(\alpha),
\]
and if $\mu$ is a marking on a finite-type surface (Section~\ref{sec:nondisplaceable}), define
\[
\pi_S(\mu)\ :=\ \bigcup_{\gamma\in \mu}\pi_S(\gamma),
\]
where the union is over all base curves and transversals of $\mu$.
\end{definition}

\begin{remark}\label{rem:projection-coarse}
Different choices of representatives and surgeries change $\pi_S(\alpha)$ only by a uniformly
bounded amount in $\calC(S)$; in particular $\diam_{\calC(S)}(\pi_S(\alpha))$ is uniformly
bounded (depending only on the topological type of $S$). Thus subsurface projections are
\emph{coarsely well-defined}. See \cite{MM2} for details.
\end{remark}

\begin{definition}[Projection distance]\label{def:projection-diameter}
Let $S$ be a finite-type subsurface which is not an annulus, and let $X,Y$ be curves,
multicurves, or markings. If $\pi_S(X)\neq \emptyset$ and $\pi_S(Y)\neq \emptyset$, define
\begin{equation}\label{eq:projection-diam}
d_S(X,Y)\ :=\ \diam_{\calC(S)}\big(\pi_S(X)\cup \pi_S(Y)\big)
 = \sup\big\{d_S(x,y)\mid x\in \pi_S(X),\ y\in \pi_S(Y)\big\}.
\end{equation}
If $\pi_S(X)=\emptyset$ or $\pi_S(Y)=\emptyset$, then the distance is not defined.
\end{definition}

\subsection*{Annular projections and twisting}
In this subsection we recall annular projections and the associated twisting distance.

When $S$ is an annulus with core curve $\gamma$, subsurface projection is taken to the
\emph{annular arc graph} $\calC(\gamma)$, whose vertices are isotopy classes (rel.\ endpoints)
of essential arcs in the annular cover $\widetilde S_\gamma$ corresponding to $\gamma$, with endpoints
on $\partial \widetilde S_\gamma$, and whose edges join disjoint arcs. For a curve $\alpha$ intersecting
$\gamma$, the projection $\pi_\gamma(\alpha)$ is the set of lifts of $\alpha$ to $\widetilde S_\gamma$
that are arcs connecting the two components of $\partial \widetilde S_\gamma$.
For a multicurve $P$ and a marking $\mu$, define $\pi_\gamma(P)$ and $\pi_\gamma(\mu)$ by taking unions
over constituent curves as in Definition~\ref{def:subsurface-projection}. Again, see \cite{MM2} for details.

\begin{remark}\label{rem:annular-coarse}
As in the non-annular case, $\pi_\gamma(\alpha)$ is coarsely well-defined and has uniformly
bounded diameter in $\calC(\gamma)$. In particular, the distance between two projections is
well-defined up to a bounded additive error.
\end{remark}

\begin{definition}[Relative twisting / annular distance]\label{def:Twisting}
Let $\gamma$ be a curve and let $\calC(\gamma)$ be the annular arc graph.
For curves, multicurves, or markings $X,Y$ with $\pi_\gamma(X),\pi_\gamma(Y)\neq \emptyset$,
define the \emph{relative twisting} (annular projection distance) about $\gamma$ by
\[
\mathrm{tw}_\gamma(X,Y)\ :=\ d_{\gamma}\big(\pi_\gamma(X),\pi_\gamma(Y)\big)
\ :=\ \diam_{\gamma}\big(\pi_\gamma(X)\cup \pi_\gamma(Y)\big).
\]
If either projection is empty, we set $\mathrm{tw}_\gamma(X,Y)=0$.
\end{definition}

\begin{remark}\label{rem:twist-translation}
The main fact we will use about relative twisting is that a Dehn twist $T_\gamma$ acts by a translation
on $\calC(\gamma)$, and hence $\mathrm{tw}_\gamma\big(X,T_\gamma^n(X)\big)$ grows coarsely linearly in $|n|$
for any $X$ with $\pi_\gamma(X)\neq \emptyset$.
\end{remark}

\subsection{A subsurface-projection length function}\label{subsec:LcalR}

We now package the above projection distances into a length function on $\Map(\Sigma)$.

Let $\calR=\{R_1,\dots,R_n\}$ be a collection of pairwise disjoint non-peripheral
compact subsurfaces of $\Sigma$. By Lemma~\ref{lem:intersect-anchor} and
Definition~\ref{def2}, every translate $g(R_i)$ intersects $K_0$ essentially.

\begin{definition}\label{Def:lengthfunctionR}
Let $\calR=\{R_1,\dots,R_n\}$ be disjoint non-peripheral compact subsurfaces of $\Sigma$.
Fix a marking $\mu$ on $K_0$ (Section~\ref{sec:nondisplaceable}). Define
$L_{\calR}^{\mu}\colon \Map(\Sigma)\to [0,\infty)$ by
\begin{equation}\label{eq:LcalR-def}
L_{\calR}^{\mu}(f)\ :=\ \sup_{g\in \Map(\Sigma)}\ \sum_{i=1}^n
d_{g(R_i)}\big(\mu,f(\mu)\big),
\end{equation}
when $f$ is not the identity. For the identity element we let $L_{\calR}^{\mu}(\id)=0$.
We suppress $\mu$ from the notation and write $L_{\calR}$ when $\mu$ is fixed.
\end{definition}

\begin{lemma}\label{Lem:islengthfunction}
$L_{\calR}$ is a length function in the sense of Definition~\ref{Def:length_function}.
\end{lemma}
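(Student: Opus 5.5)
The plan is to check the three axioms of Definition~\ref{Def:length_function} one at a time. Along the way we also confirm that $L_{\calR}$ is well defined, meaning finite and nonnegative. For each $g$ and $i$, the subsurface $g(R_i)$ meets $K_0$ essentially by Definition~\ref{def2}, so it meets the marking $\mu$, which fills $K_0$. Hence $\pi_{g(R_i)}(\mu)\neq\emptyset$. Likewise $f^{-1}g(R_i)$ meets $K_0$, so $g(R_i)$ meets $f(K_0)$ and $\pi_{g(R_i)}(f(\mu))\neq\emptyset$. Each term in \eqref{eq:LcalR-def} is therefore defined. Finiteness of the supremum is the one delicate point and is treated last. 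The first axiom, $L_{\calR}(\id)=0$, holds by definition.

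\textbf{Symmetry.} The key is $\Map(\Sigma)$-equivariance of projections: $d_{h(S)}(h(X),h(Y))=d_S(X,Y)$. Applying $f^{-1}$ gives
\[
d_{g(R_i)}(\mu,f(\mu)) = d_{f^{-1}g(R_i)}(f^{-1}(\mu),\mu).
\]
As $g$ ranges over $\Map(\Sigma)$, so does $f^{-1}g$. Since the $R_i$ are translated simultaneously by a single element, the sums over $i$ match term by term. Taking suprema gives $L_{\calR}(f)=L_{\calR}(f^{-1})$.

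\textbf{Triangle inequality.} For each fixed $g$ and $i$, the triangle inequality for diameters in $\calC(g(R_i))$ gives
\[
d_{g(R_i)}(\mu,fh(\mu))\le d_{g(R_i)}(\mu,f(\mu))+d_{g(R_i)}(f(\mu),fh(\mu)).
\]
This uses that all three projections are nonempty, which was shown above. By equivariance, the second term equals $d_{f^{-1}g(R_i)}(\mu,h(\mu))$. Summing over $i$ and taking the supremum over $g$ yields $L_{\calR}(fh)\le L_{\calR}(f)+L_{\calR}(h)$, again using that $f^{-1}g$ ranges over all of $\Map(\Sigma)$. Since we work with diameters of unions, there is no additive error: $\diam(A\cup C)\le\diam(A\cup B)+\diam(B\cup C)$ holds whenever $B\neq\emptyset$.

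\textbf{Finiteness.} I expect this to be the main obstacle. For fixed $f$, the markings $\mu$ and $f(\mu)$ are finite collections of curves in the compact set $K_0\cup f(K_0)$. Each $d_{g(R_i)}$ is bounded by a function of the intersection number $i(\mu,f(\mu))$, via the standard bound $d_S(\alpha,\beta)\le 2\,i(\alpha,\beta)+1$. This bound is uniform in $g$, since $i(\mu,f(\mu))$ does not depend on the subsurface $g(R_i)$ onto which we project. Hence the supremum is finite.

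If the paper's convention allows projection distances in the sense of diameters, the $i=j$ contribution still carries the bounded diameter of a single projection (Remark~\ref{rem:projection-coarse}). This is harmless for nonnegativity and the axioms, and is why $L_{\calR}(\id)$ is set to $0$ by fiat. That convention does not break the triangle inequality, because for $f\neq\id$ all values are at least those diameters, while the cases $f=\id$ or $h=\id$ are trivial.
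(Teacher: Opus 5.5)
Your proof is correct and follows the paper's argument. Symmetry comes from equivariance of projections together with the reparametrization $g\mapsto f^{-1}g$, and subadditivity comes from the triangle inequality in each $\calC(g(R_i))$, summed over $i$ and then supremized. Your additional checks fill in points the paper leaves implicit: non-emptiness of $\pi_{g(R_i)}(f(\mu))$, compatibility with the convention $L_{\calR}(\id)=0$, and finiteness of the supremum. For finiteness, strictly you should first bound the intersection of the surgered projection curves in terms of $i(\mu,f(\mu))$ before invoking the curve-graph distance bound, but the conclusion stands.
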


\begin{proof}
Nonnegativity and $L_{\calR}(\id)=0$ are immediate from the definition. For symmetry, we use $\Map(\Sigma)$--equivariance of projections (and of annular
projections) to observe that for any finite-type subsurface $Y$ and any $f\in \Map(\Sigma)$,
\[
d_{Y}\big(\mu,f^{-1}(\mu)\big)\ =\ d_{f(Y)}\big(\mu,f(\mu)\big).
\]
Therefore
\begin{align*}
L_{\calR}(f^{-1})
&=\sup_{g}\sum_i d_{g(R_i)}\big(\mu,f^{-1}(\mu)\big)
=\sup_{g}\sum_i d_{f g(R_i)}\big(\mu,f(\mu)\big)\\
&=\sup_{h}\sum_i d_{h(R_i)}\big(\mu,f(\mu)\big)
= L_{\calR}(f),
\end{align*}
where we reparametrized $h=fg$.

For subadditivity, fix $f,h\in \Map(\Sigma)$ and apply the triangle inequality in each
$\calC(g(R_i))$ (or annular arc graph):
\[
d_{g(R_i)}\big(\mu,fh(\mu)\big)\ \le\
d_{g(R_i)}\big(\mu,f(\mu)\big)\ +\ d_{g(R_i)}\big(f(\mu),fh(\mu)\big).
\]
Summing over $i$ and taking suprema gives
\[
L_{\calR}(fh)\ \le\ \sup_{g}\sum_i d_{g(R_i)}\big(\mu,f(\mu)\big)
\ +\ \sup_{g}\sum_i d_{g(R_i)}\big(f(\mu),fh(\mu)\big).
\]
Using equivariance again,
$d_{g(R_i)}(f(\mu),fh(\mu))=d_{f^{-1}g(R_i)}(\mu,h(\mu))$, and since
$g$ ranges over all of $\Map(\Sigma)$ so does $f^{-1}g$. Since the subsurfaces $R_i$ are non-peripheral, all the distances exist. Thus the second supremum equals
$L_{\calR}(h)$, proving
\[
L_{\calR}(fh)\le L_{\calR}(f)+L_{\calR}(h). \qedhere
\]
\end{proof}

\subsection{Quasi-isometrically embedded abelian subgroups}\label{subsec:abelian}

We now use $L_{\calR}$ to build quasi-isometric embeddings of $\ZZ^k$ from families of disjoint
non-peripheral subsurfaces.

\begin{proposition}\label{prop:embedding}
Assume that $\Map(\Sigma)$ is CB-generated. If $\calR=\{R_1,\dots,R_k\}$ is a family
of $k$ pairwise disjoint non-peripheral compact subsurfaces of $\Sigma$, then
$\Map(\Sigma)$ contains a quasi-isometrically embedded copy of $\ZZ^k$.
\end{proposition}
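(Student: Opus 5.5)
We realize $\ZZ^k$ by commuting mapping classes supported in the $R_i$, and compare the word metric with the length functions of Section~\ref{subsec:LcalR}. For each $i$, choose an essential non-peripheral curve $\gamma_i\subset R_i$. If $R_i$ is an annulus, take its core; otherwise pick $\gamma_i$ so that the annulus around it is non-peripheral, or shrink $R_i$ to an annular neighborhood of a curve meeting $K_0$ in every translate. Let $T_i$ be the Dehn twist about $\gamma_i$. The $T_i$ commute since the $R_i$ are disjoint. Define
\[
\Phi\colon \ZZ^k\to\Map(\Sigma),\qquad \Phi(n_1,\dots,n_k)=T_1^{n_1}\cdots T_k^{n_k}.
\]
Since $\Phi$ is a homomorphism, it suffices to show that $|\Phi(n)|_{\mathcal G}$ is bi-Lipschitz comparable to $\|n\|_1$.

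\textbf{Upper bound.} Each $T_i$ is a single element of $\Map(\Sigma)$. It therefore has some finite word length $c_i$ with respect to the CB generating set $\mathcal G$ of Theorem~\ref{thm:generating-set}. This gives $|\Phi(n)|_{\mathcal G}\le(\max_i c_i)\,\|n\|_1$.

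\textbf{Lower bound.} Use the length function $L_{\{A_i\}}$ for the single-element family $\{A_i\}$, where $A_i$ is the annulus around $\gamma_i$; this is a length function by Lemma~\ref{Lem:islengthfunction}. By Remark~\ref{rem:Rosendal-bounded}, $L_{\{A_i\}}$ is bounded by a constant $M$ on $\mathcal G$. Subadditivity then gives $L_{\{A_i\}}(f)\le M\,|f|_{\mathcal G}$. Taking $g=\id$ in the supremum yields
\[
L_{\{A_i\}}(\Phi(n))\ \ge\ \mathrm{tw}_{\gamma_i}\bigl(\mu,\Phi(n)\mu\bigr).
\]
Because $\gamma_i$ is non-peripheral, $\pi_{\gamma_i}(\mu)$ is nonempty (this needs a short argument; see below). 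The twists $T_j$ with $j\ne i$ are supported off $\gamma_i$, so they change this projection only by bounded amounts, and Remark~\ref{rem:twist-translation} gives
\[
\mathrm{tw}_{\gamma_i}\bigl(\mu,\Phi(n)\mu\bigr)\ \ge\ |n_i|-C.
\]
Summing over $i$ gives $\|n\|_1-kC\le kM\,|\Phi(n)|_{\mathcal G}$. Alternatively, one can use $L_{\calR}$ directly for the annuli family $\calR=\{A_1,\dots,A_k\}$ and get the sum in one step. Together with the upper bound, $\Phi$ is a quasi-isometric embedding.

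\textbf{Main obstacle.} The delicate point is finiteness of $L$: the definition requires $d_{g(A_i)}(\mu,f\mu)$ to be defined and controlled for all $g$, not just $g=\id$. Two things must be checked.

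First, $\mu$ must actually intersect every translate $g(\gamma_i)$. Non-peripherality gives only $g(\gamma_i)\cap K_0\ne\emptyset$, and since $\mu$ fills $K_0$, I would argue that any curve meeting $K_0$ essentially meets some curve of $\mu$. The remaining case is a curve parallel to $\partial K_0$. I would exclude it by choosing $\gamma_i$ so that no translate is isotopic to a boundary curve of $K_0$, or by enlarging $\mu$ to include transversals to $\partial K_0$.

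Second, the sup over $g$ must be finite on the generators, which is where Remark~\ref{rem:Rosendal-bounded} is used. That remark requires $L$ to be a genuine finite-valued length function, so I would verify finiteness first: for any fixed $f$, only finitely many twisting curves see a large value of $\mathrm{tw}(\mu,f\mu)$, by the Masur--Minsky bounded-projection phenomenon applied inside a compact surface containing $\mu\cup f\mu$.
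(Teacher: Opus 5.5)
When every $R_i$ is an annulus, or merely contains a non-peripheral curve, your argument is essentially the paper's: the paper also evaluates $L_{\calR}$ at $g=\id$ and uses the translation action of twists, and annuli are the only case used later, in Theorem~\ref{thm:infinite-rank}.

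The gap is your reduction of a general $R_i$ to an annulus. You assume that each non-peripheral $R_i$ contains a curve whose annulus is non-peripheral, and this can fail. Let $\Sigma$ be the sphere minus three points, each accumulated by genus. It is stable and $\Map(\Sigma)$ is CB-generated. Here $K_0$ is a pair of pants, and each component of $\Sigma-K_0$ is one-ended. Every curve in $\Sigma$ is peripheral:
\begin{itemize}
\item non-separating curves are peripheral by Lemma~\ref{lem:np-char}, since $\Sigma$ has infinite genus;
\item a separating curve has a side containing at most one end, and that side can be pushed deep into an end neighborhood, because by the classification of surfaces the complements match.
\end{itemize}
On the other hand, a genus-one subsurface $R$ whose three boundary curves each cut off one end is non-peripheral. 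Every translate of $R$ still separates the three ends, so it cannot lie in a one-ended component of $\Sigma-K_0$. Thus $\{R\}$ satisfies the hypotheses with $k=1$, yet there is no non-peripheral curve to twist about.

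The missing idea is to treat non-annular $R_i$ directly, as the paper does. Take $g_i$ pseudo-Anosov on $R_i$ and put the non-annular projections $d_{g(R_i)}$ into $L_{\calR}$. Then:
\begin{itemize}
\item non-peripherality of $R_i$ itself is what makes $\pi_{g(R_i)}(\mu)$ nonempty for every $g$;
\item positive translation length of $g_i$ on $\calC(R_i)$ gives linear growth of $d_{R_i}(\mu,g_i^n\mu)$;
\item maps supported on the other $R_j$ do not change $\pi_{R_i}$.
\end{itemize}
The paper's route, for its part, needs each $R_i$ to be an annulus or to support a pseudo-Anosov.

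Separately, finiteness of $L$ is not what Remark~\ref{rem:Rosendal-bounded} needs, since Rosendal's criterion concerns continuous length functions. What suffices, and does hold, is that $L$ is bounded on the open subgroup $\calV_{K_0}$, which fixes $\mu$. Every CB set lies in some $(F\calV_{K_0})^m$ with $F$ finite, so subadditivity then bounds $L$ on it.
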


\begin{proof}
Fix a marking $\mu$ on $K_0$ and consider the length function $L_{\calR}=L_{\calR}^{\mu}$
from Definition~\ref{Def:lengthfunctionR}. Let $d_A$ be the word metric coming from any
symmetric CB generating set $A$; for instance one may take the CB generating set
$\mathcal{G}$ from Theorem~\ref{thm:generating-set} (Section~\ref{generatingset}).
By Remark~\ref{rem:Rosendal-bounded}, there exists $M<\infty$ such that
$L_{\calR}(a)\le M$ for all $a\in A$. Hence for all $g\in \Map(\Sigma)$ we have
\begin{equation}\label{eq:length-to-word}
d_A(e,g)\ \ge\ \frac{1}{M}\,L_{\calR}(g).
\end{equation}

For each $i$, choose a mapping class $g_i$ supported on $R_i$ such that $g_i$
acts loxodromically on $\calC(R_i)$. Concretely: if $\zeta(R_i)\ge 2$, take $g_i$ pseudo-Anosov on $R_i$;
if $R_i$ is an annulus with core curve $\alpha_i$, take $g_i=T_{\alpha_i}$.
Since $R_i$ is non-peripheral,
\[d_{R_i}\big(\mu,g_i(\mu)\big)\ > 0. \]
Replacing $g_i$ by a power if necessary, we may assume
\begin{equation}\label{eq:linear-progress}
d_{R_i}\big(\mu,g_i^n(\mu)\big)\ \ge\ |n|
\qquad\text{for all }n\in\ZZ,
\end{equation}
using positive translation length for pseudo-Anosov elements and the translation
action of Dehn twists on annular arc graphs (see, e.g., \cite{MM2}).

Since the $R_i$ are pairwise disjoint, the mapping classes $g_i$ commute and generate
a subgroup isomorphic to $\ZZ^k$. Define
\[
\Phi\colon \ZZ^k\to \Map(\Sigma),\qquad (a_1,\dots,a_k)\mapsto g_1^{a_1}\cdots g_k^{a_k}.
\]
The map $\Phi$ is Lipschitz (with respect to the $\ell^1$ metric on $\ZZ^k$) since
$d_A(e,g_i^{n})\le |n|\,d_A(e,g_i)$.

For the lower bound, apply \eqref{eq:length-to-word} and then evaluate $L_{\calR}$
using $g=\mathrm{id}$ in the supremum:
\begin{align*}
d_A\big(\Phi(\mathbf{a}),\Phi(\mathbf{b})\big)
&=d_A\big(e,\Phi(\mathbf{a}-\mathbf{b})\big)
\ \ge\ \frac{1}{M}\,L_{\calR}\big(\Phi(\mathbf{a}-\mathbf{b})\big)\\
&\ge \frac{1}{M}\sum_{i=1}^k d_{R_i}\left(\mu,g_i^{a_i-b_i}(\mu)\right)
\ \ge\ \frac{1}{M}\sum_{i=1}^k |a_i-b_i|,
\end{align*}
where the last inequality is \eqref{eq:linear-progress}. Hence $\Phi$ is a quasi-isometric
embedding of $\ZZ^k$ into $\Map(\Sigma)$.
\end{proof}

\subsection{Infinite coarse rank}\label{subsec:infinite-rank}
We now show that, under a mild  hypothesis on the end-complexity, $\Sigma$ contains arbitrarily large families
of pairwise disjoint non-peripheral curves.

\begin{lemma}\label{lem:np-curve-K0}
Assume $\zeta(\Sigma)\ge 4$. Then there exists an essential separating curve
$\gamma\subset K_0$ such that $\gamma$ is non-peripheral. More precisely, one may choose $\gamma$
so that each component of $\Sigma-\gamma$ contains ends from at least two distinct members of
$\calA\cup\calP$ (in the sense of Section~\ref{sec:nondisplaceable}).
\end{lemma}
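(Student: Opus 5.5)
Since $\zeta(\Sigma)\ge 4$, the minimal anchor surface $K_0$ is a genus-zero (or genus $g(\Sigma)$) compact surface with at least four boundary components, one for each member of $\calA\cup\calP$. List them as $\partial_1,\dots,\partial_m$, $m=\zeta(\Sigma)\ge 4$, with complementary regions $\Sigma_1,\dots,\Sigma_m$ whose end spaces are the sets $X_1,\dots,X_m\in\calA\cup\calP$. I would choose $\gamma\subset K_0$ to be a separating curve cutting off $\partial_1\cup\partial_2$ from $\partial_3\cup\dots\cup\partial_m$ (together with all the genus of $K_0$ on the second side, say). Then $\gamma$ is essential in $\Sigma$, since each side contains at least two complementary regions, each with ends. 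The sides of $\gamma$ have end sets $X_1\sqcup X_2$ and $X_3\sqcup\dots\sqcup X_m$. For non-peripherality I want each side to be ``large'': I will arrange the labelling so that $X_1,X_2$ are not both $P$-sets belonging to the same $A$, and similarly on the other side. For example, put at least one $A\in\calA$ on each side. This is possible because $|\calA|\ge 2$ whenever $\zeta\ge 4$: each $P$ needs its own isolated $A$, so $|\calP|\le|\calA|$.

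By Lemma~\ref{lem:intersect-anchor} (Definition~\ref{def2}) it suffices to show that no $g\in\Map(\Sigma)$ has $g(\gamma)\cap K_0=\emptyset$. Suppose instead that $g(\gamma)$ lies in a single component $\Sigma_j$ of $\Sigma-K_0$. Then one side of $g(\gamma)$, call it $D$, is contained in $\Sigma_j$, so $\End(D)\subset X_j$ is a clopen subset of a single member of the partition. The other side contains everything else, including all the other $X_i$. Since $g$ maps the sides of $\gamma$ to the sides of $g(\gamma)$, one of $X_1\sqcup X_2$ or $X_3\sqcup\dots\sqcup X_m$ is homeomorphic (via $g$, preserving types of ends) to a clopen subset of $X_j$.

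The key step, and the main obstacle, is to rule this out by counting maximal ends. A side containing some $A\in\calA$ with $x_A$ isolated maximal, plus another set, contains either two distinct maximal-end points, or a maximal point together with ends that, by minimality, $A$'s region alone cannot host. The latter ends are accumulation points of types that uniquely accumulate to $x_A$, which were forced out of $A$ into $P$. I would argue case by case:
\begin{itemize}
\item If $X_j=A_x$ for isolated maximal $x$, its only maximal end is $x$. So the side must contain at most one maximal end, and it must not contain the $P$-type accumulation points separated from $A$. This contradicts the choice of sides.
\item If $X_j$ is one half of a Cantor pair, a side containing an isolated maximal end cannot embed in it, because maximal ends are preserved by homeomorphisms and $X_j$ contains no isolated maximal ones. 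A side containing both Cantor halves is also impossible, since those two halves carry $A^1_x$ and $A^2_x$; on the other hand a single half does embed, so the sides must avoid consisting of only one Cantor half plus sets that embed into it.
\item If $X_j$ is a $P$-set, it contains no maximal ends at all, while each side contains an $A$.
\end{itemize}
Making the combinatorial choice of the split uniform across these cases is the delicate part. I would handle it by choosing the partition of the $m\ge 4$ boundary components so that each side contains two members of $\calA\cup\calP$ whose union is not homeomorphic to a clopen subset of any single member. Minimality of $K_0$ gives exactly this property: if two members could be merged into a clopen piece of one member, then $K_0$ could be replaced by an anchor surface with fewer boundary components. This is the ``more precisely'' clause of the lemma.
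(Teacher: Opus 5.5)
Your reduction is sound. If $g(\gamma)\cap K_0=\emptyset$, then $g$ carries one side of $\gamma$ into a single member $X_j$ while preserving every orbit $E(z)$. So it would suffice that each side contain two members whose union has no such embedding into one member. The gap is your justification that a split with this property exists. You claim that minimality of $K_0$ forbids two members from being merged into a clopen piece of one member, but this fails for exactly the pairs that matter. If $x_A$ is an isolated maximal end and $P\in\calP$ is its associated set, then stability gives $A\cong N(x_A)$ and (iii) embeds $P$ in $A-N(x_A)$. Hence $A\sqcup P$ embeds type-preservingly as a clopen subset of $A$. Likewise, nothing in the homeomorphism type prevents $A^1_x\sqcup A^2_x$ from embedding in $A^1_x$. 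Such pairs are kept apart in $K_0$ because merging them violates the accumulation condition (iv), not because their union fails to embed. For the same reason your first bullet is wrong: $A_x$ itself contains points of every type that uniquely accumulates to $x$. So a side equal to $A_x\sqcup P_x$ \emph{can} be mapped into $A_x$, as far as that side alone is concerned.

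The missing idea is to look at the complementary side. Write $\Sigma',\Sigma''$ for the sides of $\gamma$. If $g(\End(\Sigma'))\subset C$, then $g(\End(\Sigma''))\supset \End(\Sigma)-C$. Since $g(E(z))=E(z)$, every orbit meeting $\End(\Sigma)-C$ must then meet $\End(\Sigma'')$. This fails in each case:
\begin{itemize}
\item If $\Sigma'$ contains both halves of $E(x)$, it fails for $E(x)$: the half other than $C$ lies in $g(\End(\Sigma''))$, while $E(x)\cap\End(\Sigma'')=\emptyset$.
\item If $\Sigma'$ contains $A\sqcup P$, it fails for $E(y)$, using $E(y)\subset A\cup P$.
\item If $\Sigma'$ contains two $P$'s, they carry distinct types that no single member meets simultaneously.
\end{itemize}
This is the paper's argument. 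It shows that \emph{every} curve in $K_0$ with at least two boundary components of $K_0$ on each side is non-peripheral, so no delicate choice of split is needed. If you want to keep your route instead, you must prove that the only pairs whose union embeds in a single member are the twin Cantor halves and an isolated $A$ with its own $P$. This uses $E(y)\subset A\cup P$ and the fact that distinct $P$'s carry distinct types. These bad pairs form a matching, so with at least four members a split with a good pair on each side exists.
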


\begin{proof}
Since $\zeta(\Sigma)\ge 4$, the minimal anchor surface $K_0$ has at least four boundary components
(Section~\ref{sec:nondisplaceable}). As $K_0$ is finite type, there exists an essential separating
curve $\gamma\subset K_0$ so that each component of $K_0-\gamma$ contains at least two boundary
components of $K_0$. Equivalently, each component of $\Sigma-\gamma$ contains ends coming from at least
two distinct complementary components of $\Sigma-K_0$, hence from at least two distinct sets in
$\calA\cup\calP$. Denote the components of $\Sigma - \gamma$ by $\Sigma'$ and $\Sigma''$.

We claim that such a $\gamma$ is non-peripheral. Suppose for contradiction that $\gamma$ is peripheral.
Then by Lemma~\ref{lem:intersect-anchor} there exists $g\in\Map(\Sigma)$ such that
$g(\gamma)\cap K_0=\emptyset$, hence $g(\gamma)$ is contained in a single component $\Sigma_C$ of
$\Sigma-K_0$, where $C\in\calA\cup\calP$ and $\End(\Sigma_C)=C$ (notation as in
Section~\ref{sec:nondisplaceable}). That means the ends in one component of $\Sigma-\gamma$
(say $\Sigma'$) are entirely mapped into $C$. We show this contradicts the minimality of $K_0$.

We start by recalling the structure of $\calA \cup \calP$ for a minimal $K_0$.
The maximal ends $\calM(\Sigma)$ are distributed among sets in $\calA$, with each maximal end of
Cantor type appearing in exactly two such sets and each isolated maximal end appearing in
a separate set. Also, for every $P \in \calP$, there is a point $y \in P$ and $A \in \calA$
such that $\calM(A) = x_A$ is an isolated point and $x_A$ is the only maximal end that is
an accumulation point of $E(y)$ (i.e., $E(y)$ uniquely accumulates to $x_A$). In particular
$E(y) \subset A \cup P$. Also, recall that for any $z \in \End(\Sigma)$ and $g \in \Map(\Sigma)$,
we have
\[
g(E(z)) = E(z).
\]
We argue in several cases.

Assume there are $A, B \in \calA$ such that $A\sqcup B \subset \End(\Sigma')$ and hence
$g(A \sqcup B) \subset C$. From the above discussion, we see that $g(A \sqcup B) \subset C$ is possible
only if $\calM(A)$ and $\calM(B)$ are both of the same Cantor type $E(x)$ and
$E(x) = \calM(A) \cup \calM(B)$. This means $C$ must be either $A$ or $B$ (say $A$).
But then $g(\End(\Sigma''))$ contains $B$, which is a contradiction since
$E(x) \cap \End(\Sigma'') = \emptyset$ (recall that $A$ and $B$ are the only components containing $E(x)$).

Next assume there is $A \in \calA$ and $P \in \calP$ such that $A\sqcup P \subset \End(\Sigma')$.
Then $C$ has to be homeomorphic to $A$. After composing $g$ with some finite order element $h_\sigma \in \calG$,
we can assume $C=A$. Let $y \in P$ be the end such that $E(y)$ uniquely accumulates to
some maximal end $x_B$. Since $P$ can be mapped inside $A$, we must have $B=A$.
Which means
\[
E(y) \cap \End(\Sigma'') = \emptyset.
\]
This is a contradiction since $g(A \cup P ) \subset A$
implies
\[
g(\End(\Sigma'')) \supset P \supset E(y) = g(E(y)).
\]

Finally assume there are $P_1, P_2 \in \calP$ such that $P_1\sqcup P_2 \subset \End(\Sigma')$.
Then, for $i=1,2$, there is a point $y_i \in P_i$ such that $E(y_i)$
uniquely accumulates to $x_{A_i}$ for $A_i \in \calA$. By minimality of $K_0$, $y_1$ and $y_2$
are different types, otherwise, we could replace them with $P = P_1 \cup P_2$ and reduce the number of
elements in $\calP$. Therefore, there does not exist a set in $\calA \cup \calP$ that intersects both
$E(y_1)$ and $E(y_2)$. Hence, $g(P_1 \sqcup P_2) \subset C$ is not possible.

Since we arrived at a contradiction in all 3 cases, no such $g$ exists, and
$\gamma$ is non-peripheral.
\end{proof}

\begin{theorem}\label{thm:infinite-rank}
Assume $\Sigma$ is a stable infinite-type surface such that $\Map(\Sigma)$ is
CB-generated and $\zeta(\Sigma)\ge 4$.
Then for every integer $k\ge 1$ there exist $k$ pairwise disjoint non-peripheral
curves $\gamma_1,\dots,\gamma_k$ in $\Sigma$. Consequently, $\Map(\Sigma)$ has infinite coarse rank.
\end{theorem}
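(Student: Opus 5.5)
The plan is to construct the curves by translating the non-peripheral curve $\gamma$ of Lemma~\ref{lem:np-curve-K0} with shift maps. Once we have $k$ pairwise disjoint non-peripheral curves, Proposition~\ref{prop:embedding} applied to their annular neighborhoods $R_i$ (compact, pairwise disjoint, non-peripheral) gives a quasi-isometric copy of $\ZZ^k$ in $\Map(\Sigma)$. Since $k$ is arbitrary, $\Map(\Sigma)$ has infinite coarse rank.

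The key structural fact is a criterion for non-peripherality. A separating curve $\delta$ is non-peripheral whenever each component of $\Sigma-\delta$ contains a full ``unmovable'' configuration, i.e.\ ends that cannot be mapped into a single set $C\in\calA\cup\calP$. This is exactly what the case analysis in the proof of Lemma~\ref{lem:np-curve-K0} shows: it only used that $\End(\Sigma')$ and $\End(\Sigma'')$ each contain two entire members of $\calA\cup\calP$, or more precisely enough of them, e.g.\ maximal ends and points of the relevant types $E(y)$. I will first isolate this as a criterion. The argument only uses invariant data: which maximal ends, and which types $E(y)$ with unique maximal accumulation, lie on each side. So it applies to any curve whose complementary end partition has the same ``type content'' as that of $\gamma$.

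Next, fix $\gamma\subset K_0$ with sides $\Sigma',\Sigma''$. Let $s$ be a product of shift maps $f_{A,B}$ and $f_A$ whose supports cross $\gamma$. Then $\gamma_j=s^{jN}(\gamma)$ for $j=1,\dots,k$ are images of $\gamma$ under homeomorphisms, so each is non-peripheral, since non-peripherality is $\Map(\Sigma)$-invariant by Definition~\ref{def2}. The only issue is disjointness. Images of a single curve under powers of a shift generally intersect, so instead I will build the $\gamma_j$ directly as nested separating curves. Choose an exhaustion by compact surfaces $K_0\subset K_1\subset\cdots$ adapted to the decomposition of Lemma~\ref{lem:ends}, where $K_j$ includes the levels $W^{-j},\dots,W^{j}$ of every track. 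Let $\gamma_j$ be a curve in $K_j - K_{j-1}$ separating the same sets of ``tails'' as $\gamma$ does: $\gamma_j$ puts $N(x_A)$ shrunk by $j$ levels on the same side as $\gamma$ puts $A$. The curves are nested, hence pairwise disjoint after pushing them into disjoint collar regions.

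Each $\gamma_j$ should be homeomorphic to $\gamma$ via the product of shift powers $f^{j}$ moving the level structure. Even if it is not, its complementary end sets contain, on each side, a stable neighborhood of each maximal end that $\gamma$ placed there, together with tails of each relevant type $E(y)$. So the criterion from the first step applies, and $\gamma_j$ is non-peripheral. A subtle point is that the number of boundary components of $K_0$ may not let both sides contain two members unless $\zeta\ge4$; this is where the hypothesis enters, just as in Lemma~\ref{lem:np-curve-K0}.

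The main obstacle is making the first step rigorous: checking that the three-case contradiction in Lemma~\ref{lem:np-curve-K0} depends only on ends near maximal points and on types, not on the exact sets $A,P$. In the $P$-cases the argument used $E(y)\subset A\cup P$ and $g(E(y))=E(y)$. For $\gamma_j$, the finite portion of $E(y)$ that migrated across must be handled. I would handle it by noting that accumulation at $x_A$ forces infinitely many points of $E(y)$ on the side of $x_A$, while $P$'s tail keeps points of $E(y)$ accumulating to a non-maximal point on the other side. The contradiction then persists.
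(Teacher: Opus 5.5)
There is a genuine gap, located exactly where you abandon the translation idea. As literally stated, a curve $\gamma_j\subset K_j-K_{j-1}$ with $j\ge 1$ is disjoint from $K_0$, so taking $g=\id$ in Definition~\ref{def2} shows it is \emph{peripheral}. Equivalently: $K_0$ is connected, so one side of such a $\gamma_j$ lies in a single component of $\Sigma-K_0$. Hence $\gamma_j$ cannot split the tails of the members of $\calA\cup\calP$ two-and-two as $\gamma$ does. Any curve with the end partition you want must cross $K_0$.

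Once you allow that, ``nested, hence pairwise disjoint'' is no longer automatic. Prescribing only which side the shrunken neighborhoods of the $x_A$ lie on does not force nesting. Two disjoint separating curves have nested sides, so the intermediate levels must migrate consistently in one direction. If one side of $\gamma_{j+1}$ loses levels of $A$ relative to $\gamma_j$ while gaining levels of $B$, the two curves must intersect. Finally, the non-peripherality of your $\gamma_j$ rests on a ``type content'' criterion that you yourself flag as unproven, including the bookkeeping for points of $E(y)$ that migrate across.

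The missing idea is that the translation approach you discarded works for a well-chosen $\gamma$; then neither the exhaustion nor a new criterion is needed. The paper fixes one nontrivial shift $f$ ($f_{A,B}$ or $f_A$). It uses $\zeta(\Sigma)\ge 4$ to take the curve of Lemma~\ref{lem:np-curve-K0} so that it separates $A$ from $B$ (resp.\ $A$ from $P$) while each side still carries two boundary components of $K_0$. It also takes this curve to cross the support $X$ of $f$ once, between $X^{\le 0}$ and $X^{\ge 1}$.

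Since $f$ is the identity off $X$, the side $\Sigma'$ of $\gamma$ containing $X^{\le 0}$ satisfies $\Sigma'\subsetneq f(\Sigma')$ up to isotopy, with $X^1\subset f(\Sigma')-\Sigma'$. Hence $\gamma$ and $f(\gamma)$ are disjoint and distinct. The iterates $f^j(\gamma)$ bound an increasing chain of subsurfaces, so they are pairwise disjoint. They are non-peripheral simply because non-peripherality is $\Map(\Sigma)$-invariant.

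Your observation that shift images ``generally intersect'' is true for a careless $\gamma$, or for a product of shifts moving levels in incompatible directions, but not for this choice. Your final step via Proposition~\ref{prop:embedding} is the same as the paper's.
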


\begin{proof}
Let $K_0$ be the fixed minimal anchor surface from Section~\ref{sec:nondisplaceable}, and let
$\mathcal{G}$ be the CB generating set from Theorem~\ref{thm:generating-set}
(Section~\ref{generatingset}). Since $\Sigma$ has infinite type, the end space contains infinitely many ends, which implies (by the self-similarity structure of stable surfaces) that at least one of the sets $W_{A,B}$ or $W_{A}$ in the decomposition from Lemma~\ref{lem:ends} is non-empty. Hence, at least one of
the shift maps in $\mathcal{G}$ is nontrivial. Fix such a shift map $f$ (either $f=f_{A,B}$
for some $A\neq B$ or $f=f_A$ for some $A$ and $P$ as in Section~\ref{generatingset}).

By Lemma~\ref{lem:np-curve-K0}, choose a non-peripheral essential separating curve
$\gamma\subset K_0$ that separates $A$ from $B$ in the first case and separates $A$ from $P$ in
the second case. The curve $\gamma$ intersects the support of $f$ so that $\gamma$ and
$f(\gamma)$ are distinct. More precisely, there is a connected subsurface
$X\subset \Sigma$ supporting $f$ and a bi-infinite family of pairwise disjoint finite-type subsurfaces
$\{X^j\}_{j\in\ZZ}\subset X$ with $f(X^j)=X^{j+1}$; choosing $\gamma$ to separate $X^{\le 0}$ from
$X^{\ge 1}$ inside $X$, we have that $f(\gamma)$ is disjoint from $\gamma$, and hence the curves
\[
\gamma_j\ :=\ f^j(\gamma)\qquad (j\in\ZZ)
\]
are pairwise disjoint.

Finally, non-peripheralness is invariant under the action of $\Map(\Sigma)$; therefore,
the $\gamma_j$ are pairwise disjoint non-peripheral curves. Applying Proposition~\ref{prop:embedding}
to the family $\calR=\{R_1,\dots,R_k\}$ (viewing each $R_i$ as an annular subsurface with core
curve $\gamma_i$) yields a quasi-isometric embedding $\ZZ^k\hookrightarrow \Map(\Sigma)$. Since $k$
is arbitrary, the coarse rank is infinite.
\end{proof}

\begin{figure}[ht]
\setlength{\unitlength}{0.01\linewidth}
\begin{picture}(100, 34) 
\put(25,-2){
    \includegraphics[width=0.5\linewidth]{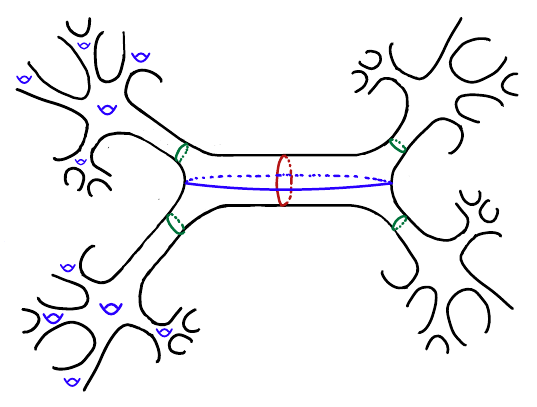}
}
\put(22,  7) {$A'$}
\put(22,  29) {$A$}
\put(77,  7) {$B'$}
\put(77,  29) {$B$}
\put(51.5,  14.5) {$\alpha$}
\put(39,  18) {$\beta$}
\end{picture}
    \caption{When $\zeta(\Sigma) = 4$ the graph $\Cnp$ may not be connected.
    But we can still find arbitrarily large number of disjoint non-peripheral curves.}
    \label{fig:4}   
\end{figure}

\begin{example} \label{exam:zeta-4}
The surface $\Sigma$ depicted in \figref{fig:4} has two types of maximal ends, both are cantor types. 
Hence $\zeta(\Sigma)=4$. The curve $\alpha$ is non-peripheral, but it intersects any other
non-peripheral curve (curves disjoint from $\alpha$ can always be pushed away from every compact set. 
Hence, $\Cnp$ is not connected. The curve $\beta$ is another example of a non-peripheral curve. 
There is a shift map between $A$ and $A'$ that shifts some part of the cantor set of ends
in $A$ to $A'$. The support of this shift map intersects $\beta$ and send it to a curve that is
disjoint from $\beta$. Therefore, $\Sigma$ still infinity many disjoint non-peripheral curves
and thus it has infinite coarse rank. 
\end{example}

\section{The non-peripheral curve graph}\label{sec:cnp}

In this section, we restrict our attention to non-peripheral curves and we define a 
curve graph built only from non-peripheral curves.
We will prove a basic connectivity result when $\zeta(\Sigma) \geq 5$.

\subsection{Definition and first examples}\label{subsec:cnp-definition}

Recall that a \emph{curve} on $\Sigma$ means an essential simple
closed curve, i.e.\ it does not bound a disk or once--punctured disk
(Section~\ref{sec:nondisplaceable}). A curve is \emph{non-peripheral} if its
annular neighborhood is non-peripheral in the sense of
Definition~\ref{def2}.

\begin{definition}[Non-peripheral curve graph]\label{def:Cnp}
The \emph{non-peripheral curve graph} $\Cnp(\Sigma)$ is the graph whose vertex
set is the set of isotopy classes of non-peripheral curves in $\Sigma$, and
with an edge between distinct vertices if they admit disjoint representatives.
We equip $\Cnp(\Sigma)$ with the path metric in which each edge has length $1$.
\end{definition}

When $\Sigma$ is finite-type, $\Cnp$ is equal to the curve graph. For small 
values of $\zeta(\Sigma)$ the graph $\Cnp(\Sigma)$ can be empty or
highly disconnected. For instance, if $\Sigma$ is the ladder surface 
(a two ended infinite genus surface), then $\zeta(\Sigma) =2$ and 
there are no non-peripheral curves and $\Cnp(\Sigma)=\emptyset$.
In the example of Figure \ref{fig:4}
where $\zeta(\Sigma) =4$, every curve disjoint from 
$\alpha$ is peripheral. That is, $\alpha$ intersects every other non-peripheral 
curve and hence $\Cnp$ is not connected. 

\subsection{Connectivity when $\zeta(\Sigma)\ge 5$}\label{subsec:cnp-connected}

We now prove that $\Cnp(\Sigma)$ is connected provided $\zeta(\Sigma)$ is large
enough. 

\begin{theorem}\label{thm:Cnp-connected}
Assume $\Sigma$ is stable, $\Map(\Sigma)$ is CB-generated, and $\zeta(\Sigma)\ge 5$.
Then the graph $\Cnp(\Sigma)$ is connected.
\end{theorem}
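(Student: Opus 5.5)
Fix once and for all a non-peripheral separating curve $\gamma_0\subset K_0$ as in Lemma~\ref{lem:np-curve-K0}. Since $\zeta(\Sigma)\ge 5$, $K_0$ has at least five boundary components. We will show that every non-peripheral curve $\alpha$ is joined to $\gamma_0$ by a path in $\Cnp(\Sigma)$.

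The argument has three steps.

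\emph{Step 1: a criterion for non-peripherality.} We first isolate the mechanism in the proof of Lemma~\ref{lem:np-curve-K0}. Suppose a separating curve $\delta$ has complementary components $\Sigma',\Sigma''$, and suppose that for each of them the end set $\End(\Sigma')$, resp. $\End(\Sigma'')$, contains (up to the homeomorphisms $h_\sigma$) the ends of two distinct members of $\calA\cup\calP$. Then $\delta$ is non-peripheral. The case analysis of Lemma~\ref{lem:np-curve-K0} applies verbatim: it only used which elements of $\calA\cup\calP$ appear on each side, not that $\delta\subset K_0$. The same reasoning shows that any curve $\delta$ (separating or not) is non-peripheral once some component of $\Sigma-\delta$ cannot be mapped into a single complementary piece $\Sigma_C$ of $K_0$.

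\emph{Step 2: curves meeting $K_0$.} Consider the subgraph spanned by non-peripheral curves contained in $K_0$, or more generally in a compact enlargement $K\supset K_0$. Because $K_0$ has at least five boundary components, the boundary-labelled surface $K_0$ has enough room for the following. For any two curves $\gamma,\gamma'\subset K_0$ each cutting off at least two boundary components on both sides, we can find a sequence of such curves, consecutive ones disjoint. Concretely, use curves that cut off a pair $\{b_i,b_j\}$ of boundary components from the remaining $\ge 3$. Two such curves with disjoint pairs can be realized disjointly, since $5\ge 4$ leaves room. By Step 1, curves cutting off any two boundary components are non-peripheral. Connectivity then reduces to a standard connectivity argument for the curve graph of $K$ relative to these ``pair'' curves, with unicorn or surgery paths inside $K$: surgery of a curve along another one, keeping the two-and-two condition. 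Here $\zeta\ge5$ is exactly what rules out the Figure~\ref{fig:4} phenomenon, where the complement of a pair curve on a four-holed sphere is again a pair curve.

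\emph{Step 3: reducing an arbitrary $\alpha$.} An arbitrary non-peripheral $\alpha$ intersects $K_0$ by Definition~\ref{def2}, but may wind far out. Choose a compact $K\supset K_0\cup\alpha$ whose boundary components each cut off an end set from a single piece of $\calA\cup\calP$. Every element of $\calA\cup\calP$ then contributes at least one boundary component of $K$, and there are at least five such pieces. Let $P$ be the subset of boundary curves of $K$ lying in the same component of $K-\alpha$, or all of them if $\alpha$ is nonseparating in $K$. We try to find a curve $\beta\subset K-\alpha$ that surrounds boundary components of $K$ coming from two distinct pieces of $\calA\cup\calP$, and whose complement also sees two distinct pieces. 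Such a $\beta$ is non-peripheral by Step 1 and is adjacent to $\alpha$. Then $\beta$ can be isotoped, through the curves of Step 2 in $K$, to a pair curve in $K_0$ and hence joined to $\gamma_0$.

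\emph{Main obstacle.} The hardest step is the existence of $\beta$ disjoint from $\alpha$. If $\alpha$ is non-peripheral, some side $\Sigma'$ of $\alpha$ must contain ends of two distinct pieces. If the other side contains only a single piece, then $\Sigma'$ contains at least $\zeta-1\ge4$ pieces. I will take $\beta\subset\Sigma'$ enclosing exactly two of these pieces, so that the complement of $\beta$ contains the other $\ge2$ pieces together with the far side of $\alpha$. The count $\zeta\ge5$ is precisely what guarantees this. Care is needed to phrase ``piece'' modulo the orbit identifications used in Step 1, namely Cantor types split between two sets $A_x^1,A_x^2$ and the $P$'s attached to a given $A$; that bookkeeping is where I expect the real work to lie.
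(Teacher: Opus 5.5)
There is a genuine gap: the step that carries the theorem is asserted rather than proved. In Step~2 you say non-peripheral curves in a compact $K\supset K_0$ can be joined by ``surgery paths \dots\ keeping the two-and-two condition'' and call this standard. But any two non-peripheral curves lie in a common compact $K$, so connectivity of the non-peripheral curves inside $K$ \emph{is} the theorem. Steps~2--3 therefore reduce nothing, and finding $\beta$ disjoint from $\alpha$ (your ``main obstacle'') is not where the difficulty lies.

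The paper needs no base curve in $K_0$ and no enlargement. It fixes any $\gamma\in\Cnp(\Sigma)$ and inducts on $i(\alpha,\gamma)$, choosing a subarc $\omega$ of $\alpha$ that crosses from one side of $\gamma$ to the other and whose interior meets $\gamma$ at most once. Suppose the interior of $\omega$ misses $\gamma$, or $\alpha$ or $\gamma$ is non-separating, or $i=1$ (handled via the one-holed torus containing $\gamma$). Then a non-separating surgery curve is produced, and it is non-peripheral because non-separating curves are all peripheral or all not (Lemma~\ref{lem:np-char}(2)). Otherwise $\gamma\cup\omega$ fills a four-holed sphere splitting $\End(\Sigma)=X_1\sqcup X_2\sqcup X_3\sqcup X_4$, with $\gamma$ inducing $(X_1\sqcup X_2)\mid(X_3\sqcup X_4)$ and both sides non-small. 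Lemma~\ref{lem:small-ends} gives an $i$ with $X_i$ and $\End(\Sigma)-X_i$ both non-small. The boundary curve around $X_i$ is then non-peripheral, disjoint from $\gamma$, and meets $\alpha$ fewer times.

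That lemma is exactly the ``piece bookkeeping'' you defer. It uses a subadditive end-complexity $\zeta(X)$ on clopen sets (isolated maximal ends, Cantor maximal types weighted $1$ or $2$, unique-accumulation types), with $\zeta(X)\ge 2\Rightarrow X$ not small. So $\sum_i\zeta(X_i)\ge\zeta(\Sigma)\ge 5$ forces some $\zeta(X_i)\ge 2$, while $\End(\Sigma)-X_i$ contains $X_1\sqcup X_2$ or $X_3\sqcup X_4$. This is a subadditivity argument, not a partition count, because one member of $\calA\cup\calP$ may contribute ends to several $X_i$. It is where $\zeta\ge 5$ is really used.

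Step~1 is also misstated in ways that break Steps~2--3.
\begin{itemize}
\item The condition must be that \emph{no} component of $\Sigma-\delta$ can be moved into a single $\Sigma_C$, not ``some''. A curve bounding a planar region that contains only a small neighborhood of one end has a large other side and is still peripheral.
\item ``Separating or not'' is false. A non-separating $\delta$ has connected complement containing every end, yet it is peripheral whenever $\Sigma$ has infinite genus (Lemma~\ref{lem:np-char}(2)).
\item The case analysis of Lemma~\ref{lem:np-curve-K0} does not transfer verbatim off $K_0$. It uses that each side of $\gamma\subset K_0$ contains \emph{entire} members of $\calA\cup\calP$. Under the reading you need for curves in $K$ (each side containing \emph{some} ends of two distinct members), the criterion fails. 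For example, in genus zero with $W_{A,B}\neq\emptyset$, the curve cutting off a small neighborhood of $x_A$ together with the block $W^1_{A,B}\subset B$ is peripheral, since $f_{A,B}^{-1}$ carries that end set into $A$.
\end{itemize}
So the pair curves of $K$ in Steps~2--3 need not lie in $\Cnp(\Sigma)$ at all. The invariant to maintain under surgery is that neither complementary end set is small (the contrapositive of Lemma~\ref{lem:np-char}(1)), and that is precisely what Lemma~\ref{lem:small-ends} provides.
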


We first proceed to charaterize non-peripheral curves by the following technical lemma.
\begin{definition}\label{def:small}
We say a clopen subset $X \subset \End(\Sigma)$
is \emph{small} if there exist $A \in \calA$ and $g \in \MapSig$ such that $g(X) \subset A$.
Since every $P \in \calP$ fits inside some $A \in \calA$, if $g(X) \subset P$
then $X$ is also small.
\end{definition}

\begin{lemma}\label{lem:np-char}
Let $\Sigma$ be a connected, orientable surface of infinite type.
\begin{enumerate}
\item  If $\alpha$ is a \emph{separating} peripheral curve then 
for some component $\Sigma'$ of $\Sigma-\alpha$ the clopen set $\End(\Sigma')\subset \End(\Sigma)$ is small
(in the sense of Definition~\ref{def:small}). 
\item If $\alpha$ is \emph{non-separating}, then $\alpha$ is peripheral if and only if $\Sigma$ has infinite genus.
\end{enumerate}
\end{lemma}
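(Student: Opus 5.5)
Both parts rest on Lemma~\ref{lem:intersect-anchor}: $\alpha$ is peripheral if and only if some $g\in\Map(\Sigma)$ has $g(\alpha)\cap K_0=\emptyset$, that is, $g(\alpha)$ lies in a single component $\Sigma_C$ of $\Sigma-K_0$ with $C\in\calA\cup\calP$. I will push this disjointness through the component structure of $\Sigma-g(\alpha)$.

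\emph{Part (1).} Let $\alpha$ be separating and peripheral, and pick $g$ with $g(\alpha)\subset \Sigma_C$. Since $\Sigma-K_0$ has at least one other component and $K_0$ is connected, $K_0$ lies in exactly one component of $\Sigma-g(\alpha)$. Call the other component $\Sigma'_g$. It is disjoint from $K_0$ and bounded by $g(\alpha)\subset\Sigma_C$, so $\Sigma'_g\subset \Sigma_C$ and hence $\End(\Sigma'_g)\subset C$. Set $\Sigma'=g^{-1}(\Sigma'_g)$, a component of $\Sigma-\alpha$; then $g(\End(\Sigma'))\subset C$. If $C\in\calA$, then $\End(\Sigma')$ is small by Definition~\ref{def:small}. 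If $C=P\in\calP$, property (iii) of anchor surfaces gives a homeomorphism from $P$ onto a clopen subset of some $A$. I will upgrade this to a mapping class $h$ with $h(P)\subset A$ by realizing the end-space homeomorphism through Richards' classification, or more simply by using the shift $f_A$ from Section~\ref{generatingset}, whose negative powers move $W_A^k$ ($k>0$) into $A$. Then $h g$ witnesses smallness. The remark in Definition~\ref{def:small} records exactly this fact, so I may cite it. The clopen-ness of $\End(\Sigma')$ is automatic.

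\emph{Part (2).} Now let $\alpha$ be non-separating. If $\Sigma$ has finite genus, every component of $\Sigma-K_0$ has genus zero by property (i), because the genus of $K_0$ equals the genus of $\Sigma$. A non-separating curve cannot lie in a planar surface, so no translate of $\alpha$ avoids $K_0$, and $\alpha$ is non-peripheral. If $\Sigma$ has infinite genus, some component $\Sigma_C$ of $\Sigma-K_0$ has infinite genus by (i). Choose a non-separating curve $\beta\subset\Sigma_C$ that is non-separating in $\Sigma$, for example a curve meeting a handle of $\Sigma_C$ once in the dual sense. By the classification of surfaces, all non-separating curves in $\Sigma$ are $\Map(\Sigma)$-equivalent: their complements are homeomorphic, since cutting along such a curve lowers the genus by one (infinite genus stays infinite) and leaves the end space unchanged. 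So some $g$ has $g(\alpha)=\beta$, which is disjoint from $K_0$, and $\alpha$ is peripheral by Lemma~\ref{lem:intersect-anchor}.

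\emph{Main obstacle.} The step I expect to be most delicate is the $\calP$ case of (1): producing an actual mapping class that carries $P$ into $A$, rather than only an abstract homeomorphism of end spaces. My first approach is to realize the homeomorphism by a global homeomorphism of $\Sigma$ that swaps $P$ with its image in $A-N(x_A)$, which is possible because both are clopen with matching genus data. If that proves awkward, I will fall back on the shift $f_A$.
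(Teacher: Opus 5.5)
Your proposal is correct and follows the paper's proof essentially step for step. In (1) you push $\alpha$ off $K_0$ and observe that the component of the complement missing $K_0$ has its ends inside a single $C\in\calA\cup\calP$; you handle the $\calP$ case explicitly, which the paper leaves to the remark in Definition~\ref{def:small}. In (2) you use the change-of-coordinates principle for non-separating curves and invoke Lemma~\ref{lem:intersect-anchor}, where the paper instead maps $\alpha$ to a sequence of curves exiting a non-planar end. One phrase needs tightening: ``a non-separating curve cannot lie in a planar surface'' is false as stated. What you actually need, and what $\genus(K_0)=\genus(\Sigma)$ gives, is that each component of $\Sigma-K_0$ is planar with a single boundary curve, so every curve in it separates $\Sigma$.
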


\begin{proof}
If $\alpha$ is separating and peripheral then there is $g\in\Map(\Sigma)$ with 
$g(\alpha)\cap K_0=\emptyset$. Then $g(\alpha)$ lies in a single
component $\Sigma_C$ of $\Sigma-K_0$, where $C=\End(\Sigma_C)\in\calA\cup\calP$. 
Since $g(\alpha)$ is separating, one complementary component of $\Sigma-g(\alpha)$ is 
contained in $\Sigma_C$. Hence, $g(\End(\Sigma_i))\subset C$ for some $i\in\{1,2\}$,
and thus $\End(\Sigma_i)$ is small.

To see the second assertion, assume $\Sigma$ has infinite genus. 
Then there is sequence $\alpha_i$ of non-separating curves exiting a non-planar end. 
For any other non-separating $\alpha$, $\Sigma- \alpha$ is homeomorphic to $\Sigma-{\alpha_i}$ 
since the two surfaces have the same genus, end space and the same number of boundary components. 
Hence, $\alpha$ can be mapped to $\alpha_i$ and hence it can be moved away from $K_0$. 

If $\Sigma$ has finite genus, then $K_0$ carries all the genus. Any curve disjoint from $K$ lies in 
a planar neighborhood of the ends, hence is separating. Therefore, a non-separating curve can never be mapped to a curve disjoint from $K_0$. This finishes the proof. 
\end{proof}

\begin{lemma}\label{lem:small-ends}
Assume $\zeta(\Sigma) \ge 5$ and let
\[
\End(\Sigma) \ =\ X_1 \sqcup X_2 \sqcup X_3 \sqcup X_4
\]
be a decomposition of the space of ends into four clopen subsets such that
$(X_1 \sqcup X_2)$ is not small and $(X_3 \sqcup X_4)$ is not small.
Then there is $1 \le i \le 4$ such that both
\[
X_i \qquad\text{and}\qquad \End(\Sigma) - X_i
\]
are not small.
\end{lemma}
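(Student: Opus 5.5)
We argue by contradiction: suppose that for every $i$, either $X_i$ or its complement $X_i^c := \End(\Sigma)-X_i$ is small. The first step is to record the basic properties of smallness used throughout, all read off from the structure of the minimal anchor surface $K_0$ in Section~\ref{sec:nondisplaceable}.
\begin{enumerate}
\item[(a)] A subset of a small clopen set is small.
\item[(b)] A small set contains maximal ends of at most one type. If $x$ is of Cantor type, a small set cannot contain all of $E(x)$, since $g(E(x))=E(x)$ while each $A\in\calA$ meets $E(x)$ in only part of it.
\item[(c)] A small set cannot contain two isolated maximal ends.
\item[(d)] A small set cannot contain ends of two different types $y_1,y_2$ that uniquely accumulate to different isolated maximal ends. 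This is the same mechanism as the third case in the proof of Lemma~\ref{lem:np-curve-K0}.
\end{enumerate}

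The key counting step is to show that if $Y$ is small, then $\End(\Sigma)-Y$ meets every ``essential feature'' of $\End(\Sigma)$. Concretely, call the $\zeta(\Sigma)$ features the two halves $A_x^1,A_x^2$ of each Cantor class, the isolated maximal ends, and the uniquely accumulating classes $E(y)$ giving rise to $P$'s. A small set can absorb at most one ``feature slot'' in the sense of (b)--(d). Refining this, I expect a small set can essentially account for at most one of the $\zeta(\Sigma)$ boundary components of $K_0$. Here one must be careful that a small set might carry an entire Cantor half or a $P$-feature together with a portion of $A$.

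Now run the case analysis. Since $X_1\sqcup X_2$ is not small, by (a) the complements $X_3^c$ and $X_4^c$, which contain it, are not small. So if the conclusion fails for $i=3,4$, then $X_3$ and $X_4$ are both small. Symmetrically, if it fails for $i=1,2$, then $X_1$ and $X_2$ are both small. Thus we reduce to the situation where all four $X_i$ are small, while $X_1\sqcup X_2$ and $X_3\sqcup X_4$ are not. The task is then to derive a contradiction with $\zeta(\Sigma)\ge 5$.

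This reduction is the real content, and I expect the main obstacle to be here. Each $X_i$ is small and hence ``uses up'' at most one feature in the sense above. Then four small pieces can only cover the maximal-end data of a surface with $\zeta\le 4$. Making this precise requires the following accounting:
\begin{enumerate}
\item[(i)] Each isolated maximal end, and each uniquely accumulating type, must lie in some $X_i$.
\item[(ii)] Each Cantor class $E(x)$ must be split among at least two $X_i$ by (b).
\item[(iii)] No $X_i$ can host two features, by (b)--(d).
\end{enumerate}
For (iii) I would also check that a single small $X_i$ cannot simultaneously contain part of a Cantor class $E(x)$ and an isolated maximal end, or a Cantor part together with a $P$-type class. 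This follows because $g(X_i)\subset A$ forces all maximal ends in $X_i$ to be of the single type $\calM(A)$. For the $P$-type, the point is that $E(y)$ accumulating only at $x_A$ with $x_A$ isolated prevents mixing with other maximal types.

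With this in place, counting features gives $\zeta(\Sigma)\le 4$, a contradiction. The delicate subcase, which I would treat first, is a uniquely accumulating class $E(y)$ sitting in an $X_i$ together with its isolated maximal end $x_A$. Such a set can be small, as $A$ itself shows, so it absorbs two of the $\zeta$ counts. To handle it, I would show that this costs nothing: the remaining three small pieces must cover at least three further features. Then I would use non-smallness of $X_1\sqcup X_2$ and $X_3\sqcup X_4$ to rule out the configurations where the count comes to only four.
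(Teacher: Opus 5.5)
Your reduction is correct and matches how the paper finishes. Since $\End(\Sigma)-X_i$ always contains $X_1\sqcup X_2$ or $X_3\sqcup X_4$, the lemma amounts to showing that $\End(\Sigma)$ cannot be cut into four small clopen pieces. Your plan, a subadditive feature count in which each small set is worth at most one, is also the paper's plan. But the proof stops exactly at the point you flag. Item (iii) is false as stated. If $x_A$ is an isolated maximal end that is a unique maximal accumulation point, then $A$ itself is small, contains $x_A$, and meets every class $E(y)$ uniquely accumulating to $x_A$. The paper's weight $\zeta(X)$ has the same defect: $\zeta(A)=2$ although $A$ is small, so its claim that $\zeta(X)\ge 2$ forces non-smallness is not correct as written. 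Your proposed repair does not close this. The count ``the other three pieces cover three further features'' only gives $\zeta(\Sigma)\le 5$, which is no contradiction. The non-smallness of $X_1\sqcup X_2$ and $X_3\sqcup X_4$ plays no role in the counting; it is used only for the complement.

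The missing idea is that this feature, like a Cantor class, must be split between at least two pieces.
\begin{itemize}
\item Let $Y_A$ be the union of all classes $E(y)$ uniquely accumulating to $x_A$. You need to group them, since several types may accumulate to the same $x_A$.
\item Unique accumulation forces $E(x_A)=\{x_A\}$. Indeed, any $\phi$ moving $x_A$ would make $E(y)=\phi(E(y))$ accumulate at the maximal end $\phi(x_A)\neq x_A$.
\item Now suppose $X$ is small, $g(X)\subset A'$, and $x_A\in X$. Then $g(x_A)=x_A$, so $A'=A$ and $g(\End(\Sigma)-X)\supseteq \End(\Sigma)-A\supseteq P$.
\item Since $P$ meets $Y_A=g(Y_A)$, the set $X$ cannot contain $Y_A$.
\end{itemize}
So define a weight on clopen sets as follows. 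For each Cantor class $E(x)$ and each set $F=\{x_A\}\cup Y_A$, count $2$ if $X\supseteq F$, $1$ if $X$ meets but does not contain $F$, and $0$ otherwise. Add $1$ for each remaining isolated maximal end in $X$. This weight is subadditive over disjoint unions and equals $\zeta(\Sigma)$ on $\End(\Sigma)$. It is at most $1$ on every small set, by your (b)--(d), the fact that $E(y)\subset A\cup P$, and the observation above. Four small pieces would then have total weight at most $4<5$, which is the contradiction you need.
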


\begin{proof}
Define the end-complexity $\zeta(X)$ of a clopen set $X \subset \End(\Sigma)$ as follows.
\begin{itemize}
    \item Add $1$ for every isolated maximal end $x_A \in X$.
    \item Add $1$ if $X$ contains a point in $E(x)$ where $x$ is a Cantor-type maximal end,
    but add $2$ if $E(x)$ is entirely contained in $X$.
    \item Add $1$ for every $A \in \calA$ such that there exists $y\in \End(\Sigma)$ with
    $E(y)$ uniquely accumulating to $x_A$ and $E(y)\cap X \neq \emptyset$.
\end{itemize}
Note that if $\zeta(X) \ge 2$, then $X$ is not small. We claim that, for disjoint clopen
sets $X, X' \subset \End(\Sigma)$, we have
\[
\zeta(X) \ \le\ \zeta(X \sqcup X') \ \le\ \zeta(X) + \zeta(X').
\]
The first inequality is immediate. To see the second one, note that any contribution to
$\zeta(X \sqcup X')$ is also a contribution to $\zeta(X)$ or to $\zeta(X')$.

To finish the proof, we notice that 
\[
\sum_{i=1}^4 \zeta(X_i) \ge \zeta(\Sigma) \geq 5.
\]
Hence,
for some $i$, $\zeta(X_i) \ge 2$. Also, $(\End(\Sigma) - X_i)$ contains either
$(X_1 \sqcup X_2)$ or $(X_3 \sqcup X_4)$ and hence it is not small. This finishes
the proof.
\end{proof}

\begin{proof}[Proof of Theorem~\ref{thm:Cnp-connected}]
Choose a curve $\gamma \in \CnpS$. We show that for every $\alpha \in \CnpS$,
there is a path in $\CnpS$ obtained from surgery between
$\alpha$ and $\gamma$ connecting $\alpha$ to $\gamma$ in $\CnpS$.

The argument is similar to \cite[Section~4]{SchleimerNotes} and it proceeds by induction on 
the intersection number between $\alpha$ and $\gamma$. As a base case, assume $i(\alpha,\gamma)=1$.
Then the curve $\beta$ obtained from surgery between $\alpha$
and $\gamma$ bounds a once-punctured torus $T$ with one boundary component that contains $\gamma$.
If $\beta$ can be moved out of every compact set, so can $T$ and hence $\gamma$, which
is not possible (since $\gamma$ is non-peripheral). Hence $\beta \in \CnpS$ and therefore
$d_{\Cnp}(\alpha,\gamma)=2$.

If the intersection number between $\alpha$ and $\gamma$ is larger than $1$, we follow
$\alpha$ starting from an intersection point with $\gamma$ until it hits $\gamma$ again.
If $\alpha$ comes back to $\gamma$ on the same side of $\gamma$ as when we started, we
continue following $\alpha$ until we have one more intersection point. That is,
we find a sub-arc $\omega$ of $\alpha$ that starts on one side of $\gamma$,
ends on the other side of $\gamma$, and whose interior is either disjoint from $\gamma$ or
intersects $\gamma$ once.

If the interior of $\omega$ is disjoint from $\gamma$, then $\omega$ together with a sub-arc of
$\gamma$ forms a curve $\beta$ that intersects $\gamma$ once (after isotopy). This means
$\gamma$ and $\beta$ are both non-separating and hence $\beta$ is non-peripheral 
(by Lemma \ref{lem:np-char}, if one non-separating curve is non-peripheral, then they are all 
non-peripheral). The curve $\beta$ intersects $\alpha$ fewer times than $\gamma$ does
and $d_{\Cnp}(\gamma,\beta)=2$. Therefore, by induction, there is a path in $\CnpS$ connecting $\beta$ to 
$\alpha$ and thus to $\gamma$.

Assume the interior of $\omega$ intersects $\gamma$ once. Then $\gamma \cup \omega$
fill a four--punctured sphere $R$. We argue in two cases. If $\gamma$ is non-separating, 
then some boundary component $\beta$ of $R$ is also non-separating and hence
non-peripheral, and $\beta$ intersects $\alpha$ less than $\gamma$ does. Similarly, 
if $\alpha$ is non-separating we argue the same way, switching the roles of 
$\alpha$ and $\gamma$. 

Finally assume that the interior of $\omega$ intersects $\gamma$ once and that both 
$\alpha$ and $\gamma$ are separating. 
Suppose in the first case that $\zeta(\Sigma) \geq 5$, then $\omega \cup \gamma$ decomposes the ends of $\Sigma$ 
into four clopen sets $X_1,\dots,X_4$. We choose the indices such that $\gamma$ gives the decomposition
\[
\End(\Sigma)=(X_1 \sqcup X_2)\bigsqcup (X_3 \sqcup X_4).
\]
Since $\gamma$ is separating and non-peripheral, by Lemma~\ref{lem:np-char} the sets $(X_1 \sqcup X_2)$
and $(X_3 \sqcup X_4)$ are not small, and the hypotheses of Lemma~\ref{lem:small-ends} hold.
Now Lemma~\ref{lem:small-ends} implies that one of the curves $\beta$ obtained by surgery between
$\gamma$ and $\omega$ also decomposes $\End(\Sigma)$ into two clopen pieces that are not small,
and hence $\beta$ is non-peripheral by Lemma~\ref{lem:np-char}. Then $d_{\Cnp}(\beta,\gamma)=1$ and 
$\beta$ intersects $\alpha$ fewer times than $\gamma$ does. Again, we are done by induction.
\end{proof}

\begin{remark} \label{rem:uniform}
The proof above uses induction on the intersection number.
At each step, we find a curve $\beta$ where $d_{\Cnp}(\beta,\gamma)\leq 2$ and $\beta$ intersects $\alpha$ fewer times than $\gamma$ does.
Hence, if $\zeta(\Sigma)\geq 5$, for every $\alpha, \beta \in \Cnp$, we have 
$$
d_{\Cnp}(\alpha, \beta) \leq 2 i(\alpha, \beta) + 2.
$$
\end{remark}

\section{Hyperbolicity of the non-peripheral curve graph}\label{sec:cnp-hyp}

In this section we prove that, when $\zeta(\Sigma)\ge 5$, the non-peripheral curve graph
$\Cnp(\Sigma)$ is Gromov hyperbolic. The argument compares $\Cnp(\Sigma)$ to the grand arc
graph constructed in \cite{BNV}, and uses a general electrification criterion
(\cite[Proposition~2.6]{KR14}).

\subsection{Grand arcs and their graph}\label{subsec:grand-arcs}

Recall that $\calM(\Sigma)$ denotes the space of maximal ends of $\Sigma$, and that the
\emph{grand splitting} partitions $\calM(\Sigma)$ into \emph{maximal types}, namely the
self-similarity equivalence classes considered in \cite{BNV}. In particular, $\calM(\Sigma)$ is the disjoint union of these maximal types.

A \emph{grand arc} is a properly embedded essential arc in $\Sigma$ whose two ideal endpoints
lie in distinct maximal types, taken up to isotopy rel.\ endpoints. The \emph{grand arc
graph} $\mathcal{GA}(\Sigma)$ is the graph whose vertices are grand arcs and whose edges join
vertices that admit disjoint representatives.

\begin{lemma}\label{lem:zeta-implies-grandarc-nontrivial}
Assume $\Sigma$ is stable and $\zeta(\Sigma)\ge 5$. Then there are at least three distinct maximal types.
In particular, $\mathcal{GA}(\Sigma)$ is nonempty. Moreover, $\mathcal{GA}(\Sigma)$ is Gromov hyperbolic.
\end{lemma}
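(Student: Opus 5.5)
The plan is to count maximal types via the formula for $\zeta(\Sigma)$ in Definition~\ref{countcomplexity}, produce a grand arc by joining ends of two distinct types, and import hyperbolicity of $\mathcal{GA}(\Sigma)$ from \cite{BNV}.

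\textbf{Counting maximal types.} Let $c$ be the number of Cantor-type maximal types, $i$ the number of isolated maximal ends, and $u\le i$ the number of isolated maximal ends that are unique maximal accumulation points. Then
\[
\zeta(\Sigma)=2c+i+u .
\]
Each isolated maximal end lies in a finite orbit $E(x)$. I will take a maximal type in the sense of \cite{BNV} to be either a Cantor orbit or a finite orbit of isolated maximal ends. If the finite orbits are to be refined further into single points, the count below only improves.

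Hence the number $t$ of maximal types satisfies $t\ge c+\#\{\text{finite orbits}\}$. In the finest reading, $t\ge c+i$.

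Suppose $t\le 2$. A first attempt would bound
\[
\zeta(\Sigma)\le 2c+2i\le 2t\le 4,
\]
contradicting $\zeta(\Sigma)\ge 5$. This step needs care: it requires each maximal type to contribute at most $2$ to $\zeta$. That holds for a Cantor type, which contributes $2$. It also holds for an isolated maximal end, which contributes $1$, plus $1$ if it is a unique maximal accumulation point.

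If types are taken as whole finite orbits, an orbit of $k$ isolated points contributes up to $2k$. This is the main obstacle. I would handle it by reading maximal types in \cite{BNV} as the classes of the grand splitting, in which distinct isolated maximal ends are separated. The paper's partition into $\calA$ already puts each isolated maximal end into its own $A\in\calA$, with its own anchor boundary, and I would align the definition with that. Under that convention each type contributes at most $2$, so $t\ge \lceil 5/2\rceil=3$.

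\textbf{Nonemptiness.} Pick two distinct maximal types and ends $x,y$ in them. Since $\Sigma$ is connected, there is a properly embedded arc from $x$ to $y$. It is essential because its endpoints are distinct ends, so it cannot be homotoped into a neighbourhood of a single end. This arc is a grand arc, so $\mathcal{GA}(\Sigma)\neq\emptyset$.

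\textbf{Hyperbolicity.} This is cited from \cite{BNV}. Their theorem states that for stable surfaces with at least three maximal types (the ``telescoping'' or non-exceptional case), the grand arc graph is connected, of infinite diameter, and Gromov hyperbolic, with uniform constants via unicorn paths in the style of \cite{HPW}. I will verify that their standing hypotheses, stability and at least three maximal types, are exactly what we established in the counting step, and invoke it.
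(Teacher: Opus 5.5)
Your proposal follows the paper's proof: you extract at least three maximal types from $\zeta(\Sigma)=2c+i+u$ with $u\le i$, join two of them by an arc, and cite \cite[Theorem~1.1]{BNV} for hyperbolicity. Your concern about conventions is substantive, not cosmetic. If types were whole $\Map(\Sigma)$-orbits, the first claim would fail: an infinite-genus surface with exactly five ends has $\zeta(\Sigma)=5$ but only one maximal orbit. So the bound $\zeta(\Sigma)\le 2t$ really does require each isolated maximal end to count as its own type, and the appeal to \cite{BNV} must use that same convention---a point the paper's one-line counting argument leaves implicit.
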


\begin{proof}
By the definition of $\zeta(\Sigma)$ (Section~\ref{sec:nondisplaceable}) and the stability hypothesis,
$\zeta(\Sigma)\ge 5$ implies that $\calM(\Sigma)$ contains at least three distinct maximal types.
Hence grand arcs exist.

Since $\Sigma$ is stable, there are only finitely many maximal types. Therefore
\cite[Theorem~1.1]{BNV} applies, and $\mathcal{GA}(\Sigma)$ is Gromov hyperbolic.
\end{proof}

\subsection{From grand arcs to non-peripheral curves}\label{subsec:arc-to-curve}

Fix $\zeta(\Sigma)\ge 5$ for the rest of this section. Let $\omega$ be a grand arc with endpoints in two
distinct maximal types. Choose disjoint stable neighborhoods $U_A,U_B$ of its two endpoints in the
sense of Section~\ref{sec:nondisplaceable}. Let $\alpha=\alpha(\omega;U_A,U_B)$ be a boundary component
of a regular neighborhood of
\[
\omega\cup \partial U_A\cup \partial U_B
\]
which separates $U_A\cup U_B$ from the complement. We call such an $\alpha$ a
\emph{small neighborhood curve} of $\omega$.

\begin{lemma}\label{lem:small-neighborhood-np}
For $U_A,U_B$ chosen sufficiently small, every small neighborhood curve
$\alpha(\omega;U_A,U_B)$ is non-peripheral.
\end{lemma}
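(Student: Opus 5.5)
The plan is to reduce non-peripherality of $\alpha=\alpha(\omega;U_A,U_B)$ to a statement about clopen sets of ends, and then apply Lemma~\ref{lem:np-char}(1). The curve $\alpha$ is separating. One side $\Sigma'$ of $\alpha$ is a regular neighborhood of $\omega\cup U_A\cup U_B$, with end space $\End(\Sigma')=U_A\sqcup U_B$. The other side $\Sigma''$ has end space $\End(\Sigma)-(U_A\sqcup U_B)$. By the contrapositive of Lemma~\ref{lem:np-char}(1), it suffices to show that neither $U_A\sqcup U_B$ nor its complement is small in the sense of Definition~\ref{def:small}.

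For $U_A\sqcup U_B$, I would use the end-complexity $\zeta(X)$ of clopen sets from the proof of Lemma~\ref{lem:small-ends}, together with the fact noted there that $\zeta(X)\ge 2$ implies $X$ is not small. Since the endpoints $a\in U_A$ and $b\in U_B$ are maximal ends of distinct maximal types, each contributes at least $1$: either as an isolated maximal end, or as a point of a Cantor-type $E(x)$. The two contributions are distinct because the types differ, so $\zeta(U_A\sqcup U_B)\ge 2$. A more direct argument also works: if $g(U_A\sqcup U_B)\subset A'$ for some $A'\in\calA$, then $\calM(A')$ would contain maximal ends of two different types, contradicting property (ii) of anchor surfaces. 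Either way, this step needs no smallness of $U_A,U_B$.

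For the complement, this is where "sufficiently small" enters. I would show $\zeta(\End(\Sigma)-(U_A\sqcup U_B))\ge 2$. By Lemma~\ref{lem:zeta-implies-grandarc-nontrivial} there are at least three maximal types, so there is a third type $c$ whose ends lie entirely outside $U_A\sqcup U_B$, provided $U_A$ and $U_B$ are chosen inside small stable neighborhoods meeting only ends of their own types. That gives contribution at least $1$. For the second unit I would use the fact that the types of $a$ and $b$ still contribute. If $a$ is of Cantor type, then $E(a)$ is not contained in a small $U_A$, so points of $E(a)$ remain in the complement. If $a$ is isolated, I would use $\zeta(\Sigma)\ge5$ to count leftover contributions from the other members of $\calA\cup\calP$. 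Concretely, $\zeta(U_A\sqcup U_B)\le 4$ once $U_A,U_B$ are shrunk to avoid all $P$-type ends, and subadditivity $\zeta(\Sigma)\le\zeta(U_A\sqcup U_B)+\zeta(\text{complement})$ then forces the complement to have complexity at least $1$. This bound is too weak on its own, so I would refine it by a case analysis: isolated versus Cantor for $a$ and $b$, and whether they are unique accumulation points.

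I expect the main obstacle to be exactly this complement estimate. The crude subadditivity count only gives $\zeta\ge1$. One must argue more carefully that shrinking $U_A,U_B$ makes their complexity at most about $2$, contributing $1$ for each endpoint type and nothing for $P$-sets, so that the complement retains $\zeta\ge 3\ge2$. For this, I would choose $U_A\subset N(x_A)$-type neighborhoods disjoint from the sets $W_A^k$ feeding into $\calP$. Once both sides are not small, Lemma~\ref{lem:np-char}(1) yields that $\alpha$ is non-peripheral.
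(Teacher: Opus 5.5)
Your reduction through Lemma~\ref{lem:np-char}(1) matches the paper. So does your treatment of $U_A\sqcup U_B$, but keep the direct argument via property (ii), not the $\zeta\ge 2$ one. You are also right that the complement needs more than the paper's one-line reason, namely that it contains a maximal end of a third type: each $A\in\calA$ contains a maximal end and is small, so $\zeta(\Sigma)\ge 5$ has to enter. The gap is in how you bring it in.

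First, the criterion you import from the proof of Lemma~\ref{lem:small-ends}, that $\zeta(X)\ge 2$ forces $X$ not to be small, is false. Suppose $x_A$ is an isolated maximal end that is the unique maximal accumulation point of some $E(y)$. Then $\zeta(A)=2$: one unit for $x_A$, and one because $E(y)$ meets every neighborhood of $x_A$. Yet $A$ is small. What is true is a bound in the other direction. $\zeta$ is $\Map(\Sigma)$-invariant and monotone, and $\zeta(A')\le 2$ for every $A'\in\calA$, because an orbit uniquely accumulating to $x_{A''}$ lies in $A''\cup P''$ with $P''\in\calP$. So a small set has $\zeta\le 2$, and you genuinely need the complement to have $\zeta\ge 3$.

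Second, your route to $3$ fails in exactly the case you flag. Suppose $a$ is an isolated maximal end that is a unique maximal accumulation point. Then every neighborhood of $a$ meets the accumulating orbit. The sets $W_A^k$ with $k\le 0$ accumulate onto $x_A$, so no $U_A$ can be chosen disjoint from them. Hence $\zeta(U_A)=2$ however small $U_A$ is. If $a$ and $b$ are both of this kind, $\zeta(U_A\sqcup U_B)=4$, and subadditivity yields only that the complement has $\zeta\ge 1$.

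The repair is to count the complement directly rather than through subadditivity, since the unique-accumulation units are seen on both sides. Choose $U_A$ so that:
\begin{itemize}
\item it contains no maximal end of another type;
\item it does not contain all of $E(a)$ when $E(a)$ is a Cantor set;
\item it misses some point of an orbit uniquely accumulating to $a$, if such an orbit exists.
\end{itemize}
Choose $U_B$ likewise. Then the complement still has the following properties.
\begin{itemize}
\item It contains every other isolated maximal end and every other Cantor orbit.
\item It meets $E(a)$ and $E(b)$ when these are Cantor sets, or contains the remaining points of a finite orbit.
\item It is a neighborhood of every other isolated maximal end, and so meets the orbits accumulating there.
\item It meets the orbits accumulating to $a$ or $b$, by the choice of $U_A$ and $U_B$.
\end{itemize}
Exactly one unit is lost for the type of $a$ and one for the type of $b$. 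So the complement has $\zeta$ equal to $\zeta(\Sigma)-2\ge 3$, and it is not small.
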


\begin{proof}
Let $\alpha=\alpha(\omega;U_A,U_B)$ and write
\[
\Sigma\setminus \alpha=Y_1\sqcup Y_2,
\qquad
U_A\cup U_B\subset Y_1.
\]
By construction, $Y_1$ contains ends from two distinct maximal types, so for $U_A$ and $U_B$
sufficiently small the end space $\End(Y_1)$ is not small in the sense of
Definition~\ref{def:small}. Since there are at least three maximal types
(Lemma~\ref{lem:zeta-implies-grandarc-nontrivial}), the complementary region $Y_2$ contains a maximal end
of a third type, and hence $\End(Y_2)$ is also not small. Therefore neither complementary component
of $\Sigma\setminus \alpha$ has small end space, so $\alpha$ is non-peripheral.
\end{proof}

Although the curve $\alpha(\omega;U_A,U_B)$ depends on the auxiliary choices, the ambiguity is
uniformly bounded in $\Cnp(\Sigma)$.

\begin{lemma}\label{lem:small-neighborhood-diam2}
For every grand arc $\omega$, the set of all small neighborhood curves associated to $\omega$
has diameter at most $2$ in $\Cnp(\Sigma)$.
\end{lemma}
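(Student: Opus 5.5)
Given two small neighborhood curves $\alpha=\alpha(\omega;U_A,U_B)$ and $\alpha'=\alpha(\omega;U'_A,U'_B)$ of the same grand arc $\omega$, we will find a non-peripheral curve $\delta$ disjoint from both, so that $d_{\Cnp}(\alpha,\alpha')\le 2$. It suffices to treat the case in which both choices are ``sufficiently small'' in the sense of Lemma~\ref{lem:small-neighborhood-np}, since only then are $\alpha,\alpha'$ vertices of $\Cnp(\Sigma)$.

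\emph{Choice of $\delta$.} Let $x_A,x_B$ be the endpoints of $\omega$. Since stable neighborhoods of $x_A$ and $x_B$ form nested neighborhood bases, we choose stable neighborhoods $U''_A\subset U_A\cap U'_A$ and $U''_B\subset U_B\cap U'_B$. We then isotope so that $\partial U''_A$ and $\partial U''_B$ lie deep in the ends, outside a compact set containing $\alpha\cup\alpha'$. Set $\delta=\alpha(\omega;U''_A,U''_B)$, a boundary component of a regular neighborhood of $\omega\cup\partial U''_A\cup\partial U''_B$.

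\emph{Disjointness.} The curve $\alpha$ bounds a region $Y_1$ that consists of the neighborhoods of $U_A$ and $U_B$ joined by a band along $\omega$. The curve $\delta$ is built from the same arc $\omega$ with smaller caps. Taking the regular neighborhood for $\delta$ thinner than the one for $\alpha$, $\delta$ can be placed inside $Y_1$, parallel to $\alpha$ along the band and nested inside near the ends. Hence $\delta\cap\alpha=\emptyset$, and symmetrically $\delta\cap\alpha'=\emptyset$.

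\emph{Non-peripherality.} Lemma~\ref{lem:small-neighborhood-np} gives that $\delta$ is non-peripheral. One side of $\delta$ contains ends of two distinct maximal types. The other side contains a maximal end of a third type, which exists by Lemma~\ref{lem:zeta-implies-grandarc-nontrivial}. Hence both sides have end spaces that are not small, and Lemma~\ref{lem:np-char} applies.

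\emph{Main obstacle.} The hardest step is the disjointness claim, because the two regular neighborhoods (and the isotopy representatives of $\omega$) for $\alpha$ and $\alpha'$ may interact with each other in complicated ways. To avoid this, we fix a single representative of $\omega$ and a single nested family of tubular neighborhoods, so that every small neighborhood curve is determined, up to isotopy, by the pair of cap neighborhoods. Nestedness of the caps then makes the nesting of $\delta$ inside both $Y_1$ and $Y'_1$ automatic.

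If there is concern that $\delta$ might coincide with $\alpha$, this is harmless, since the distance bound still holds. If instead the caps cannot be nested because $U_A$ and $U'_A$ are incomparable, we still pass to a common refinement inside $U_A\cap U'_A$, which is a clopen neighborhood of $x_A$. Stability then lets us shrink it to a stable neighborhood.
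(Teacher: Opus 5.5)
Your proposal is correct and follows the paper's proof: choose stable neighborhoods $U''_A\subset U_A\cap U'_A$ and $U''_B\subset U_B\cap U'_B$. The resulting small neighborhood curve is then a common neighbor of $\alpha$ and $\alpha'$ in $\Cnp(\Sigma)$, which gives $d_{\Cnp}(\alpha,\alpha')\le 2$. You are more explicit than the paper in two places: you check directly that the intermediate curve is non-peripheral, and you fix a single representative of $\omega$ so that the nesting of the new region inside both $Y_1$ and $Y'_1$ is well defined.
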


\begin{proof}
Let $\alpha=\alpha(\omega;U_A,U_B)$ and $\alpha'=\alpha(\omega;U_A',U_B')$ be two such curves.
Choose smaller stable neighborhoods
\[
U_A^\ast\subset U_A\cap U_A',
\qquad
U_B^\ast\subset U_B\cap U_B',
\]
and let $\beta=\alpha(\omega;U_A^\ast,U_B^\ast)$.
By construction, $\beta$ can be realized disjointly from both $\alpha$ and $\alpha'$.
Hence
\[
d_{\Cnp}(\alpha,\alpha')\le d_{\Cnp}(\alpha,\beta)+d_{\Cnp}(\beta,\alpha')\le 2. \qedhere
\]
\end{proof}

\subsection{A hybrid graph and quasi-isometry to $\Cnp(\Sigma)$}\label{subsec:hybrid-graph}

Let $\mathcal Y$ be the graph with vertex set
\[
V(\mathcal Y)=V(\Cnp(\Sigma))\sqcup V(\mathcal{GA}(\Sigma)),
\]
where two vertices are joined by an edge whenever the corresponding representatives can be realized
disjointly. In particular, $\Cnp(\Sigma)$ is an induced subgraph of $\mathcal Y$.

\begin{lemma}\label{lem:Cnp-qis-Y}
The inclusion $\Cnp(\Sigma)\hookrightarrow \mathcal Y$ is a quasi-isometry. More precisely:
\begin{itemize}
\item every vertex of $\mathcal Y$ lies at distance at most $1$ from $\Cnp(\Sigma)$; and
\item there exists $A\ge 1$ such that for any $\alpha,\beta\in V(\Cnp(\Sigma))$,
\[
d_{\mathcal Y}(\alpha,\beta)\le d_{\Cnp}(\alpha,\beta)\le A\,d_{\mathcal Y}(\alpha,\beta)+A.
\]
\end{itemize}
\end{lemma}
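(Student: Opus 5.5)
The first bullet asks for a neighbor of each grand arc inside $\Cnp(\Sigma)$. Let $\omega\in V(\mathcal{GA}(\Sigma))$. I will take the small neighborhood curve $\alpha=\alpha(\omega;U_A,U_B)$, with $U_A,U_B$ small enough that Lemma~\ref{lem:small-neighborhood-np} applies, so $\alpha$ is non-peripheral. By construction $\alpha$ is a boundary component of a regular neighborhood of $\omega\cup\partial U_A\cup\partial U_B$, so $\omega$ can be realized disjointly from $\alpha$. Hence $d_{\mathcal Y}(\omega,\alpha)=1$. Vertices of $\Cnp(\Sigma)$ are at distance $0$ from $\Cnp(\Sigma)$. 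This settles the first bullet.

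The inequality $d_{\mathcal Y}\le d_{\Cnp}$ holds because $\Cnp(\Sigma)$ is a subgraph of $\mathcal Y$. For the reverse inequality, take a geodesic $\alpha=v_0,v_1,\dots,v_n=\beta$ in $\mathcal Y$ and replace each grand-arc vertex by a non-peripheral curve at bounded $\Cnp$-distance from its neighbors. Consecutive grand arcs are disjoint, so it suffices to prove two claims.

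\begin{enumerate}
\item[(a)] If $\omega,\omega'$ are disjoint grand arcs, then some small neighborhood curves of $\omega$ and of $\omega'$ are at uniformly bounded $\Cnp$-distance.
\item[(b)] If $\gamma\in\Cnp(\Sigma)$ is disjoint from a grand arc $\omega$, then $\gamma$ is at uniformly bounded $\Cnp$-distance from a small neighborhood curve of $\omega$.
\end{enumerate}

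By Lemma~\ref{lem:small-neighborhood-diam2}, the choice of neighborhood curve costs at most $2$. So each vertex of the path contributes a bounded amount, and we get $d_{\Cnp}(\alpha,\beta)\le A\,n+A$.

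For (b), $\gamma$ is a compact curve and $\omega$ is disjoint from it. Shrink $U_A,U_B$ so that $\partial U_A,\partial U_B$ and the regular neighborhood of $\omega\cup\partial U_A\cup\partial U_B$ avoid $\gamma$. This is possible since $\gamma$ misses a neighborhood of the ends and misses $\omega$. Then $\gamma$ is disjoint from the resulting small neighborhood curve, giving distance $1$.

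For (a), do the same: choose the $U$'s small enough that the neighborhoods of $\omega$ and $\omega'$ are disjoint away from shared endpoints. If the arcs share an ideal endpoint, use nested stable neighborhoods of that common end. If the two neighborhoods can be made disjoint, the two curves are disjoint and the distance is $1$.

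When $\omega$ and $\omega'$ share an endpoint, their neighborhood curves may intersect. In that case I would use the curve $\delta$ bounding a neighborhood of $\omega\cup\omega'\cup\partial U$'s. I would check that $\delta$ is non-peripheral using Lemma~\ref{lem:np-char} and the argument of Lemma~\ref{lem:small-neighborhood-np}. One side of $\delta$ contains two distinct maximal types, so it is not small. For the other side I would appeal to the third maximal type given by Lemma~\ref{lem:zeta-implies-grandarc-nontrivial}.

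The main obstacle is this last configuration. The arcs $\omega\cup\omega'$ may use all three maximal types, and then the complement of $\delta$ might have small end space. There I would first try to pick a different boundary component of the neighborhood, or to use $\zeta(\Sigma)\ge5$ through Lemma~\ref{lem:small-ends} applied to the four-part end decomposition cut out by the two neighborhood curves. The goal is to find an intermediate non-peripheral curve disjoint from both neighborhood curves, or at distance $1$ from each. Failing that, I would bound intersection numbers and apply Remark~\ref{rem:uniform}: the two small neighborhood curves meet a uniformly bounded number of times near the common end, so their $\Cnp$-distance is uniformly bounded.
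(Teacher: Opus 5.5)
Your outline follows the paper's proof. It has the same three steps: $1$-density via a small neighborhood curve disjoint from $\omega$, the trivial inequality from $\Cnp(\Sigma)\subset\mathcal Y$, and edge-by-edge replacement of grand-arc vertices. You also use Lemma~\ref{lem:small-neighborhood-diam2} to reconcile the different neighborhood curves chosen at a single arc vertex, a step the paper leaves implicit.

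The one real difference is claim (a). The paper handles an arc--arc edge in one sentence: after shrinking the stable neighborhoods, the two small neighborhood curves can be realized disjointly. You are right that this fails when the arcs share an ideal endpoint $x$ and have different second endpoints. The side of each curve containing $x$ also contains a different second end, so the curves can be disjoint only if the chosen neighborhoods together contain every end. Shrinking makes this worse, not better. The case cannot be avoided: if the only maximal ends are three isolated ends of distinct types, any two grand arcs share an endpoint. So the paper's justification does not cover exactly the case you flagged.

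Of your proposed repairs, commit to the last one. Unlike the curve $\delta$ or an appeal to Lemma~\ref{lem:small-ends}, it needs no case analysis of the end space. What remains is to write it out:
\begin{itemize}
\item Use a single stable neighborhood of each shared end for both arcs. Choose it, after isotoping the arcs while keeping them disjoint, so that each arc crosses its boundary curve exactly once. Choose the neighborhoods of unshared ends small enough to miss the other arc.
\item The two small neighborhood curves are then band sums of the same boundary curve along disjoint bands. So they meet at most twice per shared end, giving $i\le 4$.
\item Both curves are non-peripheral by Lemma~\ref{lem:small-neighborhood-np}. Remark~\ref{rem:uniform} then bounds their $\Cnp$-distance by $2\cdot 4+2=10$.
\end{itemize}
The paper's proof never invokes this intersection-count ingredient, yet it is what actually delivers the uniform per-edge bound the paper asserts. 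Once you drop the hedged alternatives and make the choice of a common neighborhood explicit, your proof is complete and covers a case the paper's does not.
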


\begin{proof}
Let $\omega\in V(\mathcal{GA}(\Sigma))$. By Lemma~\ref{lem:small-neighborhood-np}, there exists a
small neighborhood curve $\alpha\in V(\Cnp(\Sigma))$ disjoint from $\omega$, so
$d_{\mathcal Y}(\omega,\alpha)=1$. Thus $\Cnp(\Sigma)$ is $1$-dense in $\mathcal Y$.

The inequality
\[
d_{\mathcal Y}(\alpha,\beta)\le d_{\Cnp}(\alpha,\beta)
\]
is immediate, since any path in $\Cnp(\Sigma)$ is also a path in $\mathcal Y$.

For the reverse inequality, let
\[
\gamma=(v_0,\dots,v_m)
\]
be a path in $\mathcal Y$ from $\alpha=v_0$ to $\beta=v_m$, where each $v_i$ is either a non-peripheral
curve or a grand arc. Replace each grand arc vertex $v_i=\omega_i$ by a choice of small neighborhood
curve $\widehat\omega_i\in V(\Cnp(\Sigma))$.

If two consecutive vertices of $\gamma$ are both grand arcs and are disjoint, then by choosing their
stable neighborhoods sufficiently small, the corresponding small neighborhood curves can be realized
disjointly as well. If one vertex is a grand arc and the other is a curve disjoint from it, then the
small neighborhood curve may be chosen disjoint from that curve. Therefore each edge of $\gamma$ is
replaced by a path in $\Cnp(\Sigma)$ of uniformly bounded length. It follows that there exists a constant
$A\ge 1$ such that $\gamma$ gives rise to a path in $\Cnp(\Sigma)$ from $\alpha$ to $\beta$ of length
at most $Am+A$, and hence
\[
d_{\Cnp}(\alpha,\beta)\le A\,d_{\mathcal Y}(\alpha,\beta)+A.
\]
\end{proof}

Consequently, in order to prove hyperbolicity of $\Cnp(\Sigma)$, it suffices to prove hyperbolicity
of $\mathcal Y$.

\subsection{Electrifying the grand arc graph}\label{subsec:electrify}

For each $\alpha\in V(\Cnp(\Sigma))$, let $\mathcal{GA}_\alpha\subset \mathcal{GA}(\Sigma)$
denote the induced subgraph spanned by all grand arcs disjoint from $\alpha$.

\begin{lemma}\label{lem:GAalpha-quasiconvex}
There exists $C\ge 0$ such that for every $\alpha\in V(\Cnp(\Sigma))$, the subgraph
$\mathcal{GA}_\alpha$ is $C$-quasiconvex in $\mathcal{GA}(\Sigma)$.
\end{lemma}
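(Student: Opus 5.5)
The plan is to show that $\mathcal{GA}_\alpha$ is in fact \emph{uniformly connected with bounded detours}: any two grand arcs disjoint from $\alpha$ can be joined inside $\mathcal{GA}_\alpha$ by a path that stays uniformly close to any $\mathcal{GA}(\Sigma)$-geodesic between them. The natural tool is the unicorn-path technology used in \cite{BNV} to prove hyperbolicity of $\mathcal{GA}(\Sigma)$. There, for two grand arcs $\omega,\omega'$, unicorn arcs are built from an initial subarc of $\omega$ and a terminal subarc of $\omega'$. The resulting unicorn paths are uniform quasi-geodesics: every geodesic between $\omega$ and $\omega'$ lies within bounded Hausdorff distance of the unicorn path. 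This Hausdorff control is part of the output of the guessing-geodesics criterion used in \cite{BNV}. So quasiconvexity of $\mathcal{GA}_\alpha$ reduces to the following claim: if $\omega,\omega'$ are both disjoint from $\alpha$, then every unicorn arc between them is either disjoint from $\alpha$ or within uniformly bounded distance of an arc disjoint from $\alpha$.

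The key step is elementary. Both $\omega$ and $\omega'$ are disjoint from $\alpha$, so every unicorn arc, being a union of a subarc of $\omega$ and a subarc of $\omega'$, is also disjoint from $\alpha$ (after isotopy into minimal position with $\alpha$, which can be done simultaneously). One subtlety is that a unicorn arc must have endpoints in distinct maximal types. In \cite{BNV} the unicorn arcs start at the initial endpoint of $\omega$ and end at the terminal endpoint of $\omega'$, so we orient $\omega,\omega'$ so that these endpoints lie in distinct maximal types. This is always possible because each arc joins two distinct types: if the two arcs have the same pair of types, flip one orientation; otherwise some choice of orientations works. Hence the entire unicorn path between $\omega$ and $\omega'$ lies in $\mathcal{GA}_\alpha$.

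Next we combine this with the Hausdorff closeness. Let $g$ be a geodesic in $\mathcal{GA}(\Sigma)$ between $\omega,\omega'\in\mathcal{GA}_\alpha$, and let $P$ be the unicorn path between them. The Hausdorff distance between $g$ and $P$ is at most a uniform constant $C$, and $P\subset\mathcal{GA}_\alpha$. Therefore $g$ lies in the $C$-neighborhood of $\mathcal{GA}_\alpha$, which is the desired quasiconvexity, with $C$ independent of $\alpha$.

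The main obstacle is the bookkeeping that the \cite{BNV} machinery really does give uniform Hausdorff control between unicorn paths and geodesics, rather than only slim unicorn triangles. If only slimness is available, I would invoke the standard lemma (e.g.\ Bowditch's criterion, as in Hensel--Przytycki--Webb \cite{HPW}) that a family of paths with uniformly slim triangles and bounded-length paths between adjacent vertices is uniformly Hausdorff-close to geodesics. A secondary technical point is that the minimal-position representatives of $\omega$ and $\omega'$ relative to each other must be chosen in the complement of $\alpha$. This follows by performing the bigon-removal isotopies inside the subsurface $\Sigma\setminus\alpha$, since bigons between the two arcs cannot contain $\alpha$.
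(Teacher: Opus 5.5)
Your proposal is correct and matches the paper's proof. The unicorn path of \cite{BNV} between two grand arcs disjoint from $\alpha$ is built from subarcs of those arcs, so it lies in $\mathcal{GA}_\alpha$, and uniform Hausdorff closeness of unicorn paths to geodesics in the hyperbolic graph $\mathcal{GA}(\Sigma)$ then gives uniform quasiconvexity; your added care (orienting the arcs so the unicorn endpoints lie in distinct maximal types, doing the bigon removals inside $\Sigma\setminus\alpha$, and stating the closeness in the needed direction, geodesics near the unicorn path) fills in details the paper leaves implicit.
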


\begin{proof}
Let $\omega_1,\omega_2\in V(\mathcal{GA}_\alpha)$. By construction, both arcs are disjoint from $\alpha$.
The unicorn-path construction of \cite{BNV} produces a uniform unparameterized quasi-geodesic in
$\mathcal{GA}(\Sigma)$ joining $\omega_1$ to $\omega_2$.

Since $\alpha$ is disjoint from both endpoints, the unicorn surgeries may be performed in the
complement of $\alpha$, so every intermediate grand arc remains disjoint from $\alpha$. Thus the
entire unicorn path lies in $\mathcal{GA}_\alpha$.

By Lemma~\ref{lem:zeta-implies-grandarc-nontrivial}, $\mathcal{GA}(\Sigma)$ is hyperbolic.
Uniform unparameterized quasi-geodesics in a hyperbolic graph stay in a bounded neighborhood of a
geodesic with the same endpoints. Hence every geodesic joining $\omega_1$ to $\omega_2$ lies in a
uniform neighborhood of $\mathcal{GA}_\alpha$. This proves uniform quasiconvexity.
\end{proof}

Let $\widehat{\mathcal{GA}}(\Sigma)$ be the graph obtained from $\mathcal{GA}(\Sigma)$ by
electrifying the family $\{\mathcal{GA}_\alpha\}_{\alpha\in V(\Cnp(\Sigma))}$, namely by adding an edge
between any two vertices lying in a common $\mathcal{GA}_\alpha$.

\begin{lemma}\label{lem:GAhat-hyp}
The graph $\widehat{\mathcal{GA}}(\Sigma)$ is Gromov hyperbolic.
\end{lemma}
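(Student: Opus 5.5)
The plan is to invoke the standard electrification criterion for hyperbolic graphs, namely \cite[Proposition~2.6]{KR14}: if $X$ is a $\delta$-hyperbolic graph and $\{Y_i\}$ is a family of uniformly $C$-quasiconvex subgraphs, then the graph $\widehat X$ obtained by coning off (equivalently, adding edges between any two vertices of a common $Y_i$) is Gromov hyperbolic, with constant depending only on $\delta$ and $C$. The ingredients are already in place. By Lemma~\ref{lem:zeta-implies-grandarc-nontrivial}, $\mathcal{GA}(\Sigma)$ is $\delta$-hyperbolic for some $\delta$. By Lemma~\ref{lem:GAalpha-quasiconvex}, every $\mathcal{GA}_\alpha$ is $C$-quasiconvex with $C$ independent of $\alpha$. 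So the proof is essentially a verification that the hypotheses of the cited proposition are met, in the form the proposition requires.

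The first step is to check that each $\mathcal{GA}_\alpha$ is nonempty and, if the criterion demands it, connected. Nonemptiness holds because $\alpha$ is non-peripheral and separating (or non-separating in the finite-genus case). Hence at least one complementary component of $\alpha$ contains maximal ends of two distinct maximal types, which is where non-smallness from Lemma~\ref{lem:np-char} enters. A grand arc can then be drawn in that component, disjoint from $\alpha$.

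If connectivity is needed, I will use the unicorn-path argument from the proof of Lemma~\ref{lem:GAalpha-quasiconvex}. That argument shows the unicorn path between two arcs of $\mathcal{GA}_\alpha$ stays inside $\mathcal{GA}_\alpha$. Consecutive unicorn arcs are disjoint, so $\mathcal{GA}_\alpha$ is connected. The same observation gives quasiconvexity in its strong form: geodesics between points of $\mathcal{GA}_\alpha$ stay $C$-close to it.

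The second step is to identify the graph $\widehat{\mathcal{GA}}(\Sigma)$ with the coned-off graph of \cite{KR14}, up to quasi-isometry. Adding an edge between every pair of vertices of $\mathcal{GA}_\alpha$ is $1$-quasi-isometric to attaching a cone point of edge length $1/2$ over $\mathcal{GA}_\alpha$. Hyperbolicity is a quasi-isometry invariant among geodesic spaces, so it suffices to prove hyperbolicity of the coned-off version. Applying \cite[Proposition~2.6]{KR14} then finishes the proof.

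The main obstacle I expect is matching the exact hypotheses of \cite[Proposition~2.6]{KR14}. Some versions of that criterion require more than uniform quasiconvexity: for example, a bounded-coarse-intersection or bounded-penetration condition, or that the subsets be given as a family indexed over a coarsely well-defined set. If such an extra condition is needed, I would first check whether the formulation used there needs only uniform quasiconvexity, as in Kapovich--Rafi's version. If it needs more, I would fall back on the guessing-geodesics criterion (Masur--Schleimer / Bowditch) applied directly to $\widehat{\mathcal{GA}}(\Sigma)$. The candidate paths would be the images of unicorn paths. Their thin-triangle property in $\mathcal{GA}(\Sigma)$ persists after electrification, because electrification only shortens distances and the slim-triangles constants for unicorn paths are uniform, as in \cite{BNV}.
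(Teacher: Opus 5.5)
Your proposal is correct and is the paper's proof: the paper combines Lemma~\ref{lem:zeta-implies-grandarc-nontrivial} and Lemma~\ref{lem:GAalpha-quasiconvex} with \cite[Proposition~2.6]{KR14} directly. Your preliminary checks (nonemptiness, connectivity, the cone-off comparison) are not needed: the criterion only uses that $\mathcal{GA}(\Sigma)$-geodesics between two vertices of a common $\mathcal{GA}_\alpha$ have bounded diameter after electrification, which follows at once from uniform quasiconvexity; and if you keep the nonemptiness check, it comes from pigeonholing the at least three maximal types into the at most two components of $\Sigma-\alpha$, not from non-smallness, since a non-small component may contain ends of only one maximal type.
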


\begin{proof}
By Lemma~\ref{lem:zeta-implies-grandarc-nontrivial}, $\mathcal{GA}(\Sigma)$ is hyperbolic, and by
Lemma~\ref{lem:GAalpha-quasiconvex}, the family $\{\mathcal{GA}_\alpha\}$ is uniformly quasiconvex.
Therefore \cite[Proposition~2.6]{KR14} implies that $\widehat{\mathcal{GA}}(\Sigma)$ is hyperbolic.
\end{proof}

Now let $\mathcal Y_0$ be the graph whose vertex set is
\[
V(\mathcal Y_0)=V(\mathcal{GA}(\Sigma))\sqcup V(\Cnp(\Sigma)),
\]
which contains all edges of $\mathcal{GA}(\Sigma)$, and in which for each
$\alpha\in V(\Cnp(\Sigma))$ and each grand arc $\omega\in V(\mathcal{GA}_\alpha)$ we add an edge
between $\alpha$ and $\omega$.

\begin{lemma}\label{lem:Y0-qis-GAhat}
The graph $\mathcal Y_0$ is quasi-isometric to $\widehat{\mathcal{GA}}(\Sigma)$. In particular,
$\mathcal Y_0$ is Gromov hyperbolic.
\end{lemma}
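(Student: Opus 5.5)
We will show that the map $\Phi\colon \widehat{\mathcal{GA}}(\Sigma)\to \mathcal Y_0$ induced by the identity on $V(\mathcal{GA}(\Sigma))$ is a quasi-isometry. Hyperbolicity of $\mathcal Y_0$ then follows from Lemma~\ref{lem:GAhat-hyp}, since hyperbolicity is a quasi-isometry invariant for geodesic spaces. The argument has three steps: coarse surjectivity, an upper bound, and a lower bound.

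\emph{Coarse surjectivity.} We show that $V(\mathcal{GA}(\Sigma))$ is $1$-dense in $\mathcal Y_0$. Let $\alpha\in V(\Cnp(\Sigma))$. We need a grand arc disjoint from $\alpha$, i.e.\ $\mathcal{GA}_\alpha\neq\emptyset$.
\begin{itemize}
\item If $\alpha$ is separating and non-peripheral, Lemma~\ref{lem:np-char} shows that both sides of $\alpha$ have non-small end sets. A side whose end set is non-small must contain maximal ends; otherwise it could be pushed into a single $A$. We will argue that some side contains maximal ends of two distinct maximal types. If this fails, the side lies in the complement of $\alpha$ and gives an arc joining two maximal ends of distinct types. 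If one side contains only a single type, the other side must contain the remaining types, of which there are at least two by Lemma~\ref{lem:zeta-implies-grandarc-nontrivial}. An arc connecting two of these types inside that side is a grand arc disjoint from $\alpha$.
\item If $\alpha$ is non-separating, $\Sigma-\alpha$ is connected and contains all ends, so any grand arc can be isotoped off $\alpha$.
\end{itemize}
This step, the nonemptiness of $\mathcal{GA}_\alpha$, is the main subtle point, and the counting via Lemma~\ref{lem:zeta-implies-grandarc-nontrivial} is what handles it.

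\emph{Upper bound.} For grand arcs $\omega,\omega'$ we show $d_{\mathcal Y_0}(\omega,\omega')\le 2\,d_{\widehat{\mathcal{GA}}}(\omega,\omega')$. An original edge of $\mathcal{GA}(\Sigma)$ is still an edge in $\mathcal Y_0$. An electrified edge joins $\omega,\omega'\in\mathcal{GA}_\alpha$, so in $\mathcal Y_0$ we have the path $\omega,\alpha,\omega'$ of length $2$.

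\emph{Lower bound.} We show $d_{\widehat{\mathcal{GA}}}(\omega,\omega')\le d_{\mathcal Y_0}(\omega,\omega')$. Take a geodesic in $\mathcal Y_0$. Its curve vertices are never adjacent to each other, because $\mathcal Y_0$ has no curve–curve edges. So each curve vertex $\alpha$ appears in a segment $\omega_{i-1},\alpha,\omega_{i+1}$ with $\omega_{i\pm1}\in\mathcal{GA}_\alpha$. Replace each such segment by the single electrified edge $\omega_{i-1}\omega_{i+1}$. This gives a path in $\widehat{\mathcal{GA}}(\Sigma)$ of length at most the original length.

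Together these give a $(2,1)$-quasi-isometry, which proves the lemma.
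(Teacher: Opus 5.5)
Your proof is correct and is essentially the paper's argument. An electrified edge corresponds to a length-two path $\omega,\alpha,\omega'$ in $\mathcal Y_0$ and conversely (using that $\mathcal Y_0$ has no curve--curve edges), and curve vertices are $1$-close to arc vertices because $\mathcal{GA}_\alpha\neq\emptyset$.

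In the density step, simply delete the detour through non-smallness, which contains three problems:
\begin{itemize}
\item Lemma~\ref{lem:np-char} only gives ``peripheral $\Rightarrow$ some side small''. The converse you use fails, e.g.\ for a curve cutting off a one-holed torus when $\Sigma$ has finite genus.
\item A non-small end set need not contain maximal ends.
\item Not every grand arc can be isotoped off a non-separating $\alpha$; only some can.
\end{itemize}
The pigeonhole observation you end with is all that is needed: at least three maximal types spread over at most two sides of $\alpha$ force two distinct types into one complementary component, and an arc joining them there is a grand arc disjoint from $\alpha$. This also makes explicit a point the paper's one-line justification leaves implicit, namely that the two chosen endpoints must lie on the same side of $\alpha$.
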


\begin{proof}
If $\omega,\omega'\in \mathcal{GA}_\alpha$, then
\[
d_{\widehat{\mathcal{GA}}}(\omega,\omega')=1,
\]
while in $\mathcal Y_0$ there is a path
\[
\omega-\alpha-\omega'
\]
of length $2$. Conversely, every subpath of the form
\[
\omega-\alpha-\omega'
\]
in $\mathcal Y_0$ may be replaced by the corresponding electrified edge in
$\widehat{\mathcal{GA}}(\Sigma)$. Hence the arc-vertex subgraph of $\mathcal Y_0$ is quasi-isometric
to $\widehat{\mathcal{GA}}(\Sigma)$.

It remains to note that every curve vertex $\alpha\in V(\Cnp(\Sigma))$ lies at uniformly bounded
distance from the arc-vertex subgraph. Indeed, since $\alpha$ is compact and there are at least
three maximal types, one can choose a grand arc disjoint from $\alpha$ by taking endpoints in two
distinct maximal types lying outside a sufficiently large compact neighborhood of $\alpha$.
Thus $\mathcal{GA}_\alpha\neq\emptyset$, and every curve vertex is adjacent to some grand arc.

Therefore $\mathcal Y_0$ is quasi-isometric to $\widehat{\mathcal{GA}}(\Sigma)$, and hence
$\mathcal Y_0$ is hyperbolic.
\end{proof}

\subsection{Hyperbolicity of $\mathcal Y$ and of $\Cnp(\Sigma)$}\label{subsec:Cnp-hyp}

Recall that $\mathcal Y$ has the same vertex set as $\mathcal Y_0$, but also contains the
curve--curve disjointness edges of $\Cnp(\Sigma)$.

\begin{lemma}\label{lem:Y-qis-Y0}
The identity map on vertices induces a quasi-isometry $\mathcal Y_0\to \mathcal Y$.
In particular, $\mathcal Y$ is Gromov hyperbolic.
\end{lemma}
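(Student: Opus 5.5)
The plan is to show that adding the curve--curve edges of $\Cnp(\Sigma)$ to $\mathcal Y_0$ changes distances only by a bounded multiplicative factor. $\mathcal Y_0$ is a subgraph of $\mathcal Y$ on the same vertex set, so the identity map $\mathcal Y_0\to\mathcal Y$ is $1$-Lipschitz. For the reverse inequality, it suffices to show that each edge of $\mathcal Y$ not already in $\mathcal Y_0$ can be replaced by a path in $\mathcal Y_0$ of uniformly bounded length. Once that holds, the identity is a quasi-isometry, and hyperbolicity of $\mathcal Y$ follows from Lemma~\ref{lem:Y0-qis-GAhat}, since hyperbolicity is a quasi-isometry invariant for geodesic graphs.

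We first check which edges of $\mathcal Y$ are missing from $\mathcal Y_0$. Arc--arc edges are the edges of $\mathcal{GA}(\Sigma)$, and these are present in $\mathcal Y_0$. A curve--arc edge joins $\alpha$ to an arc $\omega$ disjoint from $\alpha$, which means $\omega\in\mathcal{GA}_\alpha$, so it is also present. The only missing edges are therefore curve--curve edges $\alpha - \beta$ with $\alpha,\beta\in V(\Cnp(\Sigma))$ disjoint.

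For such a pair, the key step is to find a single grand arc $\omega$ disjoint from both $\alpha$ and $\beta$. Then $\omega\in\mathcal{GA}_\alpha\cap\mathcal{GA}_\beta$, and $\alpha-\omega-\beta$ is a path of length $2$ in $\mathcal Y_0$. To build $\omega$, I would argue as in the last paragraph of the proof of Lemma~\ref{lem:Y0-qis-GAhat}, applied to the compact set $\alpha\cup\beta$:
\begin{enumerate}
\item Take a compact connected subsurface $Z$ containing $\alpha\cup\beta$.
\item The complementary components of $Z$ partition $\End(\Sigma)$ into finitely many clopen pieces.
\item There are at least three maximal types by Lemma~\ref{lem:zeta-implies-grandarc-nontrivial}.
\item We need one complementary component $W$ of $Z$ containing maximal ends of two distinct maximal types; a proper arc in $W$ joining them is then a grand arc disjoint from $\alpha\cup\beta$.
\end{enumerate}

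The main obstacle is step (4): $\alpha\cup\beta$ can separate the maximal types from one another. For instance, if $\alpha$ and $\beta$ are separating, disjoint, and jointly cut off each maximal type into its own region, no arc avoiding both curves joins distinct types. I see two ways to handle this.

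The first is to weaken the requirement to a path of bounded length in $\mathcal Y_0$ instead of a common arc. Choose $\omega_\alpha\in\mathcal{GA}_\alpha$ and $\omega_\beta\in\mathcal{GA}_\beta$ (nonempty by Lemma~\ref{lem:Y0-qis-GAhat}), each chosen to meet the other curve minimally. One would then bound $d_{\mathcal{GA}}(\omega_\alpha,\omega_\beta)$ uniformly by a surgery or unicorn argument in the complement of $\alpha\cup\beta$. For example, take a grand arc disjoint from $\alpha$ whose intersection with $\beta$ is a single essential arc, and then perform one unicorn surgery along $\beta$.

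If that fails, the second approach is a case analysis on the partition of the maximal types induced by $\alpha$ and $\beta$. Since both curves are non-peripheral, Lemma~\ref{lem:np-char} says that neither side of either curve is small. Using $\zeta(\Sigma)\ge 5$, as in Lemma~\ref{lem:small-ends}, one would then show that some region of $\Sigma\setminus(\alpha\cup\beta)$ contains two maximal types, or else that a chain $\alpha-\omega_1-\omega_2-\beta$ of bounded length exists.

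Either way, every edge of $\mathcal Y$ becomes a path in $\mathcal Y_0$ of length at most some uniform constant $c$. Hence $d_{\mathcal Y_0}\le c\,d_{\mathcal Y}$, which completes the quasi-isometry.
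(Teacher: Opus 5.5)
Your reduction is correct. $\mathcal Y_0$ and $\mathcal Y$ have the same vertices, and the arc--arc and curve--arc edges of $\mathcal Y$ already lie in $\mathcal Y_0$. So the identity is $1$-Lipschitz, and everything reduces to bounding $d_{\mathcal Y_0}(\alpha,\beta)$ for disjoint $\alpha,\beta\in V(\Cnp(\Sigma))$.

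You are also right to distrust step (4), and that step is exactly the paper's own proof. The paper picks two maximal ends of distinct types outside a large compact neighborhood of $\alpha\cup\beta$ and joins them by a grand arc missing $\alpha\cup\beta$. This overlooks that $\Sigma\setminus(\alpha\cup\beta)$ can be disconnected, and your scenario really occurs with $\zeta(\Sigma)=5$:
\begin{itemize}
\item Let the maximal ends be two Cantor types $E(x),E(x')$ and one isolated end $x''$.
\item Let $\alpha$ cut off all of $E(x)$ and $\beta$ all of $E(x')$, with $x''$ between them.
\item Every side of either curve contains all of $E(x)$ or all of $E(x')$. No single $A\in\calA$ contains such an orbit, since each is split between two members of $\calA$. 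So no side is small, and the contrapositive of Lemma~\ref{lem:np-char}(1) makes $\alpha,\beta$ non-peripheral.
\item Yet every grand arc meets $\alpha\cup\beta$, so no path $\alpha-\omega-\beta$ exists.
\end{itemize}

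The gap is that your proposal never supplies the replacement. Both routes are only sketched, and ``Either way'' asserts exactly the claim that needs proof. In the first route, neither the existence of the special arc nor the outcome of the surgery is justified. The second invokes ``non-peripheral $\Rightarrow$ neither side small'', which is the converse of Lemma~\ref{lem:np-char}(1) and is not proved.

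What is missing is a short count.
\begin{itemize}
\item Two disjoint distinct curves have at most three complementary regions. There are exactly three only when both separate, and then the regions are arranged linearly: $Y_\alpha$ (bounded by $\alpha$), $Y_{\mathrm{mid}}$ (bounded by both), and $Y_\beta$ (bounded by $\beta$).
\item By Lemma~\ref{lem:zeta-implies-grandarc-nontrivial} there are at least three maximal types.
\item If some region contains ends of two types, a grand arc inside it gives $d_{\mathcal Y_0}(\alpha,\beta)\le 2$.
\item Otherwise there are exactly three regions, each containing exactly one type, say $t_\alpha,t_{\mathrm{mid}},t_\beta$. Let $\omega_1$ join a $t_{\mathrm{mid}}$-end to a $t_\beta$-end crossing $\beta$ once, so it misses $\alpha$. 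Let $\omega_2$ join a $t_{\mathrm{mid}}$-end to a $t_\alpha$-end crossing $\alpha$ once, so it misses $\beta$.
\item These two arcs can be realized disjointly in $\Sigma$, at most sharing an ideal endpoint. Hence $\alpha-\omega_1-\omega_2-\beta$ is a path in $\mathcal Y_0$.
\end{itemize}
Thus $d_{\mathcal Y_0}\le 3\,d_{\mathcal Y}$, which gives the quasi-isometry. This argument uses neither non-peripherality nor Lemma~\ref{lem:small-ends}.
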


\begin{proof}
The identity map $\mathcal Y_0\to \mathcal Y$ is $1$-Lipschitz, since $\mathcal Y$ is obtained from
$\mathcal Y_0$ by adding edges. It therefore suffices to show that every curve--curve edge of
$\mathcal Y$ is realized by a path of uniformly bounded length in $\mathcal Y_0$.

Let $\alpha,\beta\in V(\Cnp(\Sigma))$ be disjoint. Since $\alpha\cup\beta$ is compact and there are at
least three maximal types, we may choose two maximal ends of distinct types that lie in the complement
of a sufficiently large compact neighborhood of $\alpha\cup\beta$. Joining these ends by a grand arc
$\omega$ disjoint from $\alpha\cup\beta$, we obtain
\[
\omega\in \mathcal{GA}_\alpha\cap \mathcal{GA}_\beta.
\]
Hence in $\mathcal Y_0$ there is a path
\[
\alpha-\omega-\beta
\]
of length $2$. Thus every curve--curve edge in $\mathcal Y$ has length at most $2$ in $\mathcal Y_0$,
which proves the quasi-isometry claim.

Since $\mathcal Y_0$ is hyperbolic by Lemma~\ref{lem:Y0-qis-GAhat}, so is $\mathcal Y$.
\end{proof}

We can now state the main theorem of this section.

\begin{theorem}\label{thm:Cnp-hyperbolic}
Assume $\Sigma$ is stable and $\zeta(\Sigma)\ge 5$. Then the non-peripheral curve graph
$\Cnp(\Sigma)$ is Gromov hyperbolic.
\end{theorem}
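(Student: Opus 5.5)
The plan is to assemble the chain of comparisons built in this section; almost no new argument is needed. The first step is Lemma~\ref{lem:zeta-implies-grandarc-nontrivial}. Under stability and $\zeta(\Sigma)\ge 5$ it gives at least three maximal types, so $\mathcal{GA}(\Sigma)$ is nonempty and Gromov hyperbolic by \cite[Theorem~1.1]{BNV}. By Lemma~\ref{lem:small-neighborhood-np}, every grand arc has a non-peripheral small neighborhood curve disjoint from it, so $V(\Cnp(\Sigma))$ is nonempty. Lemma~\ref{lem:GAalpha-quasiconvex} then says the family $\{\mathcal{GA}_\alpha\}_{\alpha\in V(\Cnp(\Sigma))}$ is uniformly quasiconvex. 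With these inputs, \cite[Proposition~2.6]{KR14} applies and Lemma~\ref{lem:GAhat-hyp} gives hyperbolicity of the electrification $\widehat{\mathcal{GA}}(\Sigma)$.

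Next I transfer hyperbolicity along quasi-isometries, which preserve Gromov hyperbolicity for geodesic graphs. Lemma~\ref{lem:Y0-qis-GAhat} identifies $\mathcal Y_0$ with $\widehat{\mathcal{GA}}(\Sigma)$ up to quasi-isometry. The idea is that each cone vertex $\alpha$ realizes the electrified edges of $\mathcal{GA}_\alpha$ with length $2$. Each such cone vertex also lies within distance $1$ of an arc vertex, since we can choose a grand arc far from $\alpha$. Lemma~\ref{lem:Y-qis-Y0} then shows that adding the curve--curve disjointness edges changes distances by at most a factor of $2$. This works because disjoint $\alpha,\beta$ have a common disjoint grand arc. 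Hence $\mathcal Y$ is hyperbolic.

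Finally, Lemma~\ref{lem:Cnp-qis-Y} shows that the inclusion $\Cnp(\Sigma)\hookrightarrow\mathcal Y$ is a quasi-isometry: it is $1$-dense, and it satisfies $d_{\mathcal Y}\le d_{\Cnp}\le A\,d_{\mathcal Y}+A$. A subtlety is that $\Cnp(\Sigma)$ must be a geodesic (connected) graph for hyperbolicity to make sense. This follows already from the right-hand inequality, since $\mathcal Y$ is connected. Connectivity of $\mathcal Y$ comes from connectivity of $\mathcal Y_0$, which follows from the quasi-isometry with $\widehat{\mathcal{GA}}(\Sigma)$ and the connectivity of $\mathcal{GA}(\Sigma)$ from \cite{BNV}. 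It is also consistent with Theorem~\ref{thm:Cnp-connected}, although that theorem assumes CB-generation. Since hyperbolicity is a quasi-isometry invariant among geodesic spaces, $\Cnp(\Sigma)$ is Gromov hyperbolic.

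The main obstacle is not the assembly but the robustness of two inputs. The first is Lemma~\ref{lem:GAalpha-quasiconvex}: I need unicorn surgeries between arcs disjoint from $\alpha$ to stay disjoint from $\alpha$. This holds because unicorn arcs are unions of subarcs of the two given arcs. The second is the bounded-length replacement in Lemma~\ref{lem:Cnp-qis-Y}. If I had to recheck one step, it would be that adjacent grand arcs admit simultaneously disjoint small neighborhood curves. I would verify it by choosing stable neighborhoods of all four endpoints small and pairwise disjoint, using Lemma~\ref{lem:small-neighborhood-diam2} to absorb the choice.
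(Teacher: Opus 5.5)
Your proposal is correct and follows the paper's own proof. Hyperbolicity of $\mathcal{GA}(\Sigma)$ and uniform quasiconvexity of the subgraphs $\mathcal{GA}_\alpha$ give hyperbolicity of $\widehat{\mathcal{GA}}(\Sigma)$ via \cite[Proposition~2.6]{KR14}. The quasi-isometries $\widehat{\mathcal{GA}}(\Sigma)\simeq\mathcal Y_0\simeq\mathcal Y\simeq\Cnp(\Sigma)$ of Lemmas~\ref{lem:Y0-qis-GAhat}, \ref{lem:Y-qis-Y0} and~\ref{lem:Cnp-qis-Y} then transfer this to $\Cnp(\Sigma)$, and your connectivity remark is a correct addition.

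One caution on the check you propose for Lemma~\ref{lem:Cnp-qis-Y}. Adjacent grand arcs may share an ideal endpoint; if $\calM(\End(\Sigma))$ consists of exactly three points, this happens for every edge. In that case four pairwise disjoint neighborhoods do not exist, and the two small neighborhood curves need not be realizable disjointly. That case therefore needs a separate bounded-distance argument in $\Cnp(\Sigma)$ rather than disjointness.
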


\begin{proof}
By Lemma~\ref{lem:Y-qis-Y0}, the graph $\mathcal Y$ is hyperbolic.
By Lemma~\ref{lem:Cnp-qis-Y}, $\Cnp(\Sigma)$ is quasi-isometric to $\mathcal Y$.
Since hyperbolicity is a quasi-isometry invariant, $\Cnp(\Sigma)$ is hyperbolic.
\end{proof}

\begin{remark}\label{rem:GA-vs-Cnp}
The grand arc graph and $\Cnp(\Sigma)$ need not be quasi-isometric. 
For example, suppose $|\calP|=2$ and $|\calA|\ge 3$ in the notation of Section~\ref{sec:nondisplaceable}.
Then there is a non-peripheral curve $\alpha\subset K_0$ which separates the sets in $\calP$ from the
sets in $\calA$. Let $W$ be the component of $K_0\setminus \alpha$ on the side containing the sets in
$\calA$. Then every grand arc joining two distinct maximal types must intersect $W$, so $W$ is a witness
for $\mathcal{GA}(\Sigma)$. A pseudo-Anosov supported on $W$ therefore acts loxodromically on
$\mathcal{GA}(\Sigma)$, and in particular the set of grand arcs disjoint from $\alpha$ has infinite
diameter in $\mathcal{GA}(\Sigma)$.

This is compatible with the argument above: the electrification step uses uniform quasiconvexity of the
subgraphs $\mathcal{GA}_\alpha$, not bounded diameter.
\end{remark}

\section{Divergence of big mapping class groups}\label{sec:divergence}

As before, assume that $\Sigma$ is stable and that $\MapSig$ is CB-generated. In this section we prove
a quadratic upper bound on the divergence of $\MapSig$. Divergence is a quasi-isometry invariant of
geodesic metric spaces which measures the length of the shortest detours avoiding large balls.
Although divergence is most often studied for finitely generated groups, the definition applies in the
present setting because $\Map(\Sigma)$ is CB-generated and hence carries a well-defined quasi-isometry
type of word metrics (Section~\ref{sec:nondisplaceable}).

\subsection{Divergence}\label{subsec:div-def}

We recall the definition from \cite{DMS}.  (In \cite{DMS} this is stated for proper
geodesic spaces; the definition below makes sense for any geodesic metric space, and
this is the level at which we use it.)

\begin{definition}[Divergence]\label{def:Div}
Let $(X,d_X)$ be a geodesic metric space. Fix constants $0<\rho<1$ and $T>0$.
For a pair of points $a,b\in X$ and a point $c\in X-\{a,b\}$, define the
\emph{divergence of $(a,b)$ relative to $c$} to be the length of the shortest path from
$a$ to $b$ in $X$ avoiding the open ball $B(c,r)$ of radius
\[
r=\rho\cdot \min\big(d_X(c,a),d_X(c,b)\big)-T.
\]
If no such path exists, define this divergence to be $\infty$.
The \emph{divergence of the pair $(a,b)$} is the supremum of these quantities over all
$c\in X-\{a,b\}$.  Finally, define
\[
\Div_X(R;\rho,T)=\sup\Big\{\Div_X(a,b;\rho,T)\ \Big|\ d_X(a,b)\le R\Big\},
\]
where $\Div_X(a,b;\rho,T)$ denotes the divergence of the pair $(a,b)$ (i.e.\ the supremum over $c$).
As usual, we consider divergence functions up to the equivalence relation
\[
f\preceq g\ \Longleftrightarrow\ \exists C>1\ \forall R\ge 0:\ 
f(R)\le C\,g(CR)+CR+C,
\]
and
\[
f\equiv g\ \Longleftrightarrow\ f\preceq g\quad \text{and}\quad g\preceq f.
\]
\end{definition}

\subsection{Quadratic upper bound}\label{subsec:div-main}

Our goal is to prove that if $\zeta(\Sigma)\ge 5$ then $\Map(\Sigma)$ has at most
quadratic divergence.

\begin{theorem}\label{thm:Div}
Assume $\Sigma$ is stable, $\Map(\Sigma)$ is CB-generated, and $\zeta(\Sigma)\ge 5$.
Equip $\Map(\Sigma)$ with the word metric $d_{\calS}$ associated to any CB generating set.
Then for every $0<\rho<1$ and $T>0$
\[
\Div_{(\Map(\Sigma),d_{\mathcal{S}})}(R;\rho,T) \preceq R^2. 
\]
In other words, the divergence of $\Map(\Sigma)$ is at most quadratic.
\end{theorem}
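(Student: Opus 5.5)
By left-invariance and the definition of divergence, it suffices to consider $a=\id$, $b=g$ with $d_{\calS}(\id,g)\le R$, and a point $c$ with $d_{\calS}(c,\id),d_{\calS}(c,g)\ge r/\rho$. The goal is to connect $\id$ to $g$ by a path of length $O(R^2)$ that avoids $B(c,r)$. I will follow the Duchin--Rafi detour scheme. The first step is to write $g=s_1\cdots s_n$ with $n\le R$ and $s_j\in\calS$. Using Theorem~\ref{thm:generating-set} (and the quasi-isometry invariance from Theorem~\ref{thm:Rosendall}), I may assume each $s_j$ is either an element of $\calV_{K_0}$ or one of finitely many fixed elements (shifts $f_{A,B},f_A$, permutations $h_\sigma$, twists in $\mathcal L$). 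Each of these is supported on, or fixes, some explicit region. The key point is that every generator commutes with a Dehn twist $T_\delta$ about some non-peripheral curve $\delta$ disjoint from its support, or at least with some element of a uniformly bounded family of such twists.

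The second step is to build a \emph{persistent twist} chain. By Theorem~\ref{thm:Cnp-connected} and Remark~\ref{rem:uniform}, non-peripheral curves adapted to consecutive generators can be joined by paths in $\Cnp(\Sigma)$ of uniformly bounded length. This produces a sequence of non-peripheral curves $\delta_0,\delta_1,\dots,\delta_N$ with $N=O(R)$ and $\delta_{k}\cap\delta_{k+1}=\emptyset$. Each generator $s_j$ commutes with $T_{\delta_{k(j)}}$ for an appropriate index, and consecutive twists commute. I then fix $m\approx C R$ and form the path
\[
\id \to T_{\delta_0}^m \to T_{\delta_0}^mT_{\delta_1}^m T_{\delta_0}^{-m}\to\cdots
\]
and, more concretely, the standard ``translate the word by a large twist'' path. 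Set $h_j=s_1\cdots s_j$. Walk from $h_j$ to $h_jT_{\delta}^m$, then apply $s_{j+1}$, which is possible because $s_{j+1}$ commutes with the twist, and when moving to the next curve replace $T_{\delta_k}^m$ by $T_{\delta_{k+1}}^m$ via $T_{\delta_k}^mT_{\delta_{k+1}}^m$, which commute. Each segment has length $O(m)$ or $O(1)$, giving $O(R)$ segments of length $O(R)$, so total length $O(R^2)$. The same construction with $g^{-1}$ reversed handles the final return to $g$.

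The third step, which I expect to be the main obstacle, is showing that this path avoids $B(c,r)$. Here I plan to use the length function $L_{\calR}$ from Definition~\ref{Def:lengthfunctionR}, with $\calR$ a single annulus, together with Remark~\ref{rem:Rosendal-bounded}. Every point on the detour except near the endpoints has the form $h\,T_{\delta_k}^{m'}T_{\delta_{k+1}}^{m''}$ with $m'$ or $m''$ comparable to $m$. The twist projection to $h(\delta_k)$ or $h(\delta_{k+1})$ is then about $m$ larger than that of anything within $R$ of $\id$ or $g$, because $\delta_k$ is non-peripheral, so the annular projection of $h^{-1}\mu$ exists and twisting is linear by Remark~\ref{rem:twist-translation}. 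The difficulty is that $c$ is arbitrary and may itself carry large twisting. To deal with this, I would first try a case split. If $d(c,\id)\gg R$, then $c$ is far from the entire path by the triangle inequality, since the path has diameter $O(R)$ around... This fails, because the path has length $O(R^2)$. So instead I would choose the twist power $m$ and the sign of twisting (two alternative detours, with $T^{+m}$ and $T^{-m}$) so that at least one of them avoids $c$. For a point $p$ on the detour, the length-function bound $d(p,c)\ge L(p^{-1}c)/M$ shows that one of the two sign choices keeps $d(p,c)\ge m/M-O(R)$. Choosing $m=CR$ with $C$ large relative to $\rho$ makes this exceed $r\le\rho R'$ in the relevant regime. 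Where $r$ is much larger than $R$, I would instead concatenate with the coarse-geodesic endpoints and rescale $m$ proportionally to $r$. This still gives a quadratic bound because only pairs with $d(a,b)\le R$ matter, and divergence is defined up to $\preceq$.
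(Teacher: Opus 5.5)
Your first two steps are the paper's construction (Lemma~\ref{lem:commuting-twist}, Proposition~\ref{prop:link}, and the block/switch detour in the proof of Theorem~\ref{thm:Div2}). You normalize $a=\id$ where the paper normalizes $c=\id$, which is harmless. The gap is in your third step.

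First, the far regime is not an obstacle, and your fix for it is wrong. If $d(c,a)\ge R/(1-\rho)$, then every point $p$ of a geodesic $[a,b]$ satisfies $d(p,c)\ge d(a,c)-R\ge \rho\, d(a,c)-T\ge r$, so the geodesic already works. Taking $m\propto r$ instead produces a path of length $\asymp Rr$, which is not bounded by any function of $R$. Your objection to the triangle inequality also confuses length with diameter: every vertex of your detour is $hT^{m'}_{\delta_k}T^{m''}_{\delta_{k+1}}$ with $\Norm{h}_{\calS}\le R$ and $|m'|,|m''|\le m$, so the detour lies in a ball of radius $O(R)$ about $\id$.

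Second, in the remaining regime $d(c,\id)=O(R)$, the ``large twisting of $c$'' you worry about cannot occur. Applying Lemma~\ref{lem:extending-branch}(2),(3) to $c^{-1}h$ gives $d\bigl(c,hT^{\pm m}_{\delta}\bigr)\ge \bigl(m-M\Norm{c^{-1}h}_{\calS}-A\bigr)/M$, and likewise when a second disjoint twist is present. Since $\Norm{c^{-1}h}_{\calS}=O(R)$ is beaten by $m=CR$ for either sign, no sign choice is needed wherever the full power $m$ is present.

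What is genuinely missing is the part you set aside with ``except near the endpoints'': the twist rays $\{T^{\varepsilon m'}_{\delta_0}\}_{0\le m'\le m}$ leaving $\id$ and $\{gT^{\varepsilon' m'}_{\delta_N}\}_{0\le m'\le m}$ leaving $g$. There the exponent is small, the bound $m/M-O(R)$ is false, and a ray really can run through $B(c,r)$ (take $c=T^{k}_{\delta_0}$ with $0<k<m$). This is where the sign must be chosen, and it needs the paper's extending-branch argument, Lemma~\ref{lem:extending-branch}(1). Since $m'\mapsto xT^{m'}_{\delta}$ is a uniform $(q,Q)$-quasi-geodesic, suppose both rays from $\id$ met $B\bigl(c,\eta\,d(c,\id)\bigr)$. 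The two meeting points would have parameters of opposite sign and size $\gtrsim(1-\eta)d(c,\id)/q$, yet lie within $2\eta\, d(c,\id)$ of each other. This is impossible for $\eta<1/(1+q^2)$ and $d(c,\id)$ large.

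That argument only controls balls whose radius is a small fraction of $d(c,\id)$. So you also need the reduction to small $\rho$, i.e.\ the independence of the divergence class from $\rho$ that the paper takes from \cite{DMS} (Remark~\ref{rem:Div-QI}). For $\rho$ close to $1$ the point $\id$ is barely outside $B(c,r)$, and neither quasi-geodesic ray need escape.

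Finally, the two end signs must be chosen independently. The good sign at $\id$ is governed by the twisting of $c$ about $\delta_0$, the one at $g$ by the twisting of $g^{-1}c$ about $\delta_N$, and these are unrelated. So ``one of the two global detours $T^{\pm m}$ avoids $c$'' is not justified. Independent choices are harmless because every switch-move vertex carries the full power of one of the two twists; this is exactly how the paper uses $\varepsilon_1,\varepsilon_r$.
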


\begin{remark}\label{rem:Div-QI}
By \cite[Section~3]{DMS}, divergence is a quasi-isometry invariant for geodesic metric spaces
(note that properness is not needed). Also, for CB-generated Polish groups the quasi-isometry type
of a word metric does not depend on the particular symmetric CB generating set
(Section~\ref{sec:nondisplaceable}, \cite{Rosendal}).
Thus it is enough to work with $d_{\mathcal{S}}$ for the specific generating set
$\mathcal{S}$ from Theorem~\ref{thm:generating-set}.
Furthermore, the equivalence class of divergence does not depend on $\rho$ and $T$
(\cite[Section~3]{DMS}). Hence we can use any pair $(\rho,T)$.
\end{remark}

The left action of the group on itself is by isometries of $d_{\mathcal{S}}$, so any triple
$(a,b,c)$ can be translated so that $c=\id$. Since we are taking the supremum over all triples
$(a,b,c)$, it suffices to take the supremum over triples with $c=\id$. Also, following
\cite[Section~3]{DMS}, we may assume that $d_{\mathcal{S}}(\id,a)$,
$d_{\mathcal{S}}(\id,b)$, and $d_{\mathcal{S}}(a,b)$ are all comparable to $R$ up to a fixed
multiplicative constant. Therefore, Theorem~\ref{thm:Div} follows from Theorem~\ref{thm:Div2} below.

\begin{theorem}\label{thm:Div2}
Assume $\Sigma$ is stable, $\Map(\Sigma)$ is CB-generated, and $\zeta(\Sigma)\ge 5$.
Then there exist constants $\eta>0$, $M_0>0$, and $T_0>0$, depending only on $\Sigma$
and the chosen CB generating set $\mathcal{S}$, such that the following holds for all $R>0$.

If $g_1,g_2\in \Map(\Sigma)$ satisfy
\[
R \le \Norm{g_i}_{\mathcal{S}} \le 2R \qquad (i=1,2),
\]
then there exists a path in $(\Map(\Sigma),d_{\mathcal{S}})$ from $g_1$ to $g_2$ which is disjoint
from the ball
\[
B(\id,\eta R-T_0)
\]
and has length at most $M_0R^2$.
\end{theorem}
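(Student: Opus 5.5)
The plan follows the persistent-twist detour of \cite{DR}. We choose a finite chain of non-peripheral curves whose Dehn twists are uniformly costly to undo, walk from $g_i$ out along such a twist, connect the two far endpoints through a connected sequence of commuting twists, and then walk back.

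\textbf{Lower bound for twist words.} Fix a non-peripheral curve $\gamma_0\subset K_0$; it exists by Lemma~\ref{lem:np-curve-K0}, since $\zeta(\Sigma)\ge 4$. By Theorem~\ref{thm:Cnp-connected} and Remark~\ref{rem:uniform}, the $\Map(\Sigma)$-orbit of $\gamma_0$ together with $\Cnp(\Sigma)$-edges gives coarse connectivity: each generator $s\in\calS$ moves $\gamma_0$ a uniformly bounded distance $D$ in $\Cnp(\Sigma)$. To see this, apply Lemma~\ref{lem:np-char}. The CB part $\calV_{K_0}$ fixes $\gamma_0$, and the finitely many other generators give finitely many values of $i(\gamma_0,s\gamma_0)$.

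The key length estimate uses the length function $L_{\{\gamma\}}$ of Definition~\ref{Def:lengthfunctionR}, with $\calR$ the annulus about a non-peripheral $\gamma$. By Lemma~\ref{Lem:islengthfunction} and Remark~\ref{rem:Rosendal-bounded},
\[
\Norm{h\,T_{\gamma}^{n}}_{\calS}\ \ge\ \tfrac1M\,\mathrm{tw}_{\gamma}\big(\mu,hT_\gamma^n\mu\big)\ \ge\ \tfrac1M\big(|n|-\mathrm{tw}_\gamma(\mu,h\mu)\big)-O(1),
\]
and $\mathrm{tw}_\gamma(\mu,h\mu)\le M\Norm{h}_{\calS}$. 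Hence, for $|n|\ge (1+\eta)M\cdot 2R$, the path $t\mapsto h T_\gamma^t$, $0\le t\le n$, started at $h=g_i$, has word length at least $\eta R$ once $t$ exceeds a linear threshold. I will need a slightly stronger form, in which the twist is not undone by later moves in disjoint supports. That form follows because twists about disjoint curves commute and leave the $\gamma$-annular projection essentially unchanged.

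\textbf{Construction of the detour.}
\begin{enumerate}
\item Choose $N\approx CR$. From $g_1$, apply $T_{g_1\gamma_0}^{N}=g_1T_{\gamma_0}^Ng_1^{-1}$, i.e.\ move along $g_1T_{\gamma_0}^t$. Each step costs $\Norm{T_{\gamma_0}}_{\calS}$, so this leg has length $O(R)$. It stays outside $B(\id,\eta R-T_0)$ by the estimate above. Near $t=0$ the point $g_1T_{\gamma_0}^t$ is close to $g_1$, which already has norm $\ge R$, so the ball is avoided there as well.
\item Write $g_1^{-1}g_2$ as a word of length $\le 4R$ in $\calS$. This produces a path in $\Cnp(\Sigma)$ from $\gamma_0$ to $g_1^{-1}g_2\gamma_0$ of length $\le 4DR$, say $\gamma_0=c_0,c_1,\dots,c_m$ with consecutive curves disjoint. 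Each $c_j$ is a translate $k_j\gamma_0$ for suitable partial products $k_j$.
\item Do the ``relay''. At stage $j$ we sit at $g_1k_j T_{\gamma_0}^{N}$, which equals $g_1 T_{c_j}^N k_j$. Add the twist $T_{c_{j+1}}^{N}$, which commutes with $T_{c_j}^N$. Then remove $T_{c_j}^N$ while $T_{c_{j+1}}^N$ persists. Then move $k_j\to k_{j+1}$ using the generator between them, which costs $O(1)$. Throughout, at least one full twist $T^N$ about a non-peripheral curve is present.
\item Finally, untwist $T_{\gamma_0}^N$ at $g_2$.
\end{enumerate}
Each of the $m=O(R)$ stages costs $O(N)=O(R)$, so the total length is $O(R^2)$. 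This gives the constant $M_0$.

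\textbf{Main obstacle.} The hard step is showing that every point of the relay stays outside $B(\id,\eta R-T_0)$. A point in stage $j$ has the form $g_1T_{c_j}^{a}T_{c_{j+1}}^{b}k$ with $\max(a,b)=N$. I would bound its norm from below using $L_{\{\gamma\}}$ at whichever curve carries the full twist. The difficulty is that the ``background'' $g_1k_j$ may have already twisted about that curve by an amount comparable to $N$, and could partially cancel it.

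To handle this, I would first try choosing $N\ge 10M\cdot 6R$. Since $\Norm{g_1k_j}_{\calS}\le 6R$, we get $\mathrm{tw}_{c}(\mu,g_1k_j\mu)\le 6MR$, so the residual twist is at least $N/2$. The remaining care is making the generator steps between $k_j$ and $k_{j+1}$ compatible with the twisting curve. This requires the pair to be disjoint and the curve to be preserved up to bounded change, which is what the $\Cnp$-path with edges of bounded intersection (Remark~\ref{rem:uniform}) is meant to supply. If disjointness of consecutive $c_j$ fails to persist under these generator moves, the fallback is to insert intermediate non-peripheral curves, using Lemma~\ref{lem:small-ends}, so that each relay step is between disjoint curves.
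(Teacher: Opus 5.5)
Your overall architecture is the paper's: a Duchin--Rafi persistent-twist detour, with lower bounds from the twist length function and $N\gtrsim 6MR$. However, the first and last legs fail as written. Your bound $\Norm{g_1T_{\gamma_0}^t}_{\calS}\ge \frac1M(t-2MR)-O(1)$ is vacuous for $t\lesssim 2MR$, and closeness to $g_1$ only protects $t\lesssim (1-\eta)R/\Norm{T_{\gamma_0}}_{\calS}$. In between, the path can enter the ball and can even pass through its center. Take $g_1=T_{\gamma_0}^{-k}$ with $k$ chosen so that $R\le \Norm{g_1}_{\calS}\le 2R$; this is possible because $k\mapsto T_{\gamma_0}^k$ is a quasi-geodesic (Lemma~\ref{lem:twist-quasiline}). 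Then $k-O(1)\le \tw_{\gamma_0}(\mu,g_1\mu)\le 2MR$, so $k<N$ for large $R$, and $g_1T_{\gamma_0}^{k}=\id$ lies on your path.

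The missing idea is to \emph{choose the direction} of the twist. Since $t\mapsto gT_{\gamma_0}^t$ is a $(q,Q)$-quasi-geodesic line, its two rays from $g$ cannot both meet $B(\id,\eta\Norm{g}_{\calS})$ when $\eta<1/(1+q^2)$ and $\Norm{g}_{\calS}$ is large. Otherwise, points on opposite rays at parameter at least $((1-\eta)\Norm{g}_{\calS}-Q)/q$ would lie within $2\eta\Norm{g}_{\calS}$ of each other. This is Lemma~\ref{lem:extending-branch}\textup{(1)}. You then twist by $T^{\varepsilon_1 t}$ at $g_1$ and by $T^{\varepsilon_2 t}$ at $g_2$, with independently chosen signs. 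Change sign at some relay step, inserting one relay through a disjoint curve if needed. This is harmless because during the relay the lower bound comes from whichever full twist is present, whatever its sign.

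Second, Steps 2 and 3 are inconsistent as stated. The curves $k_j\gamma_0$, for partial products $k_j$, are only within $\Cnp$-distance $D$ of one another, not disjoint; for example, this fails when $s$ is a Dehn twist about a curve crossing $\gamma_0$. Once intermediate curves are inserted, they are no longer of the form $k_j\gamma_0$, so ``move $k_j\to k_{j+1}$ using the generator between them'' loses its meaning.

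Your fallback is the right idea, and here is how to make it precise. For each of the finitely many generators $s$ with $s\gamma_0\neq\gamma_0$ (all of $\calV_{K_0}$ fixes $\gamma_0$), fix a $\Cnp$-path $\gamma_0=\delta_0,\dots,\delta_\ell=s\gamma_0$. Relay in the frame of $k_j$ through the points $g_1k_jT_{\delta_i}^{N}$. Each twist step is right multiplication by $T_{\delta_i}^{\pm1}$, which has uniformly bounded norm. The lower bound uses $L_{\delta_i}$, with constants uniform over the finite set of $\delta$'s. A single right multiplication by $s$ then gives $g_1k_jT^N_{s\gamma_0}\,s=g_1k_{j+1}T^N_{\gamma_0}$. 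This step leaves the twist about $g_1k_{j+1}\gamma_0$ untouched, so no commutation is needed, and your worry about the curve being ``preserved'' disappears. This variant in fact avoids the paper's Lemma~\ref{lem:commuting-twist}, which needs a non-peripheral curve fixed by each generator.
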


\subsection{A twist length function}\label{subsec:div-step1}

We use the annular specialization of the length functions from
Section~\ref{subsec:LcalR} (Definition~\ref{Def:lengthfunctionR}). Fix once and for all a marking $\mu$ on $K_0$ (Section~\ref{sec:nondisplaceable}).
For a non-peripheral curve $\alpha$, let $A_\alpha$ denote the annulus with core $\alpha$,
and define
\begin{equation}\label{eq:Lalpha-def}
L_\alpha(g):=\sup_{h\in \Map(\Sigma)} \tw_{h(\alpha)}\big(\mu,g(\mu)\big),
\end{equation}
where $\tw_{h(\alpha)}$ is the annular distance from Definition~\ref{def:Twisting}
(Section~\ref{lf}).  This is the special case of $L_{\calR}$ with $\calR=\{A_\alpha\}$.

\begin{lemma}\label{lem:Lalpha-length}
For every non-peripheral curve $\alpha$, the function $L_\alpha$ is a length function on $\Map(\Sigma)$.
Moreover, there exists $M_\alpha<\infty$ such that
\begin{equation}\label{eq:Lalpha-wordlower}
\Norm{g}_{\mathcal{S}}\ge \frac{1}{M_\alpha}\,L_\alpha(g)\qquad\text{for all }g\in\Map(\Sigma).
\end{equation}
\end{lemma}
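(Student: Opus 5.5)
The plan is to obtain both assertions from results already established in Section~\ref{sec:nonperipheral-rank}. The first assertion is that $L_\alpha$ is a length function. Let $A_\alpha$ be the annulus with core $\alpha$; it is a compact non-peripheral subsurface, since $\alpha$ is non-peripheral by Definition~\ref{def:Cnp}. With $\calR=\{A_\alpha\}$, equation \eqref{eq:Lalpha-def} is exactly Definition~\ref{Def:lengthfunctionR}, with $d_{h(A_\alpha)}$ read as the annular distance $\tw_{h(\alpha)}$ of Definition~\ref{def:Twisting}. Hence Lemma~\ref{Lem:islengthfunction} applies directly. The only points to check are the ones that proof relies on:
\begin{itemize}
\item equivariance of annular projections, $\tw_{f(\gamma)}(f(X),f(Y))=\tw_\gamma(X,Y)$, which holds because $f$ lifts to an isomorphism of annular covers;
\item the triangle inequality in the annular arc graph $\calC(h(\alpha))$;
\item nonemptiness of the projections $\pi_{h(\alpha)}(\mu)$ for all $h$.
\end{itemize}
The last point is where non-peripherality enters. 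By Definition~\ref{def2}, $h(\alpha)$ meets $K_0$. Since $\mu$ fills $K_0$, either some curve of $\mu$ crosses $h(\alpha)$ essentially, or $h(\alpha)$ is parallel to a component of $\partial K_0$. In the second case $\tw$ is $0$ by convention, and this is harmless for the inequalities. With that convention, symmetry and subadditivity go through exactly as in Lemma~\ref{Lem:islengthfunction}.

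The second assertion is finiteness of $L_\alpha$. Before invoking Remark~\ref{rem:Rosendal-bounded}, I would check that $L_\alpha(g)<\infty$ for each $g$, since a length function must take values in $[0,\infty)$. For fixed $g$, both $\mu$ and $g(\mu)$ are finite collections of curves. Standard annular projection estimates give $\tw_\gamma(X,Y)\le 2\,i(X,Y)+O(1)$ uniformly over all curves $\gamma$, where $i$ is the total geometric intersection number. Therefore $\sup_h \tw_{h(\alpha)}(\mu,g(\mu))\le 2\,i(\mu,g(\mu))+O(1)<\infty$.

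With finiteness in hand, Remark~\ref{rem:Rosendal-bounded} (Rosendal, Proposition 2.7(5)) shows that $L_\alpha$ is bounded on the CB generating set $\mathcal{S}$. Set $M_\alpha:=\max\big(1,\sup_{s\in\mathcal{S}}L_\alpha(s)\big)<\infty$. Now write $g=s_1\cdots s_n$ with $n=\Norm{g}_{\mathcal{S}}$. Subadditivity gives $L_\alpha(g)\le \sum_j L_\alpha(s_j)\le M_\alpha n$, which is \eqref{eq:Lalpha-wordlower}.

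The main obstacle I expect is the finiteness step, together with the implicit continuity and measurability hypotheses in Rosendal's result. The sup over the uncountable family $h\in\Map(\Sigma)$ must be controlled. The uniform intersection-number bound is what I would try first. If Rosendal's statement needs more than finiteness, I would avoid it and bound $L_\alpha$ directly on $\mathcal{S}$, generator by generator:
\begin{itemize}
\item on $\calV_{K_0}$, elements fix $\mu$, so $L_\alpha=0$;
\item on the finitely many shifts, $h_\sigma$'s and elements of $\mathcal{L}$, apply the intersection bound to each.
\end{itemize}
This gives $\sup_{\mathcal{S}}L_\alpha<\infty$ directly.
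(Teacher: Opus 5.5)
Your proposal is correct and follows the paper's proof: Lemma~\ref{Lem:islengthfunction} with $\calR=\{A_\alpha\}$, then $\sup_{s\in\mathcal{S}}L_\alpha(s)<\infty$, then subadditivity along a minimal word. Your finiteness check and generator-by-generator bound are worth keeping, because $L_\alpha$ is not continuous at $\id$: for $\id\neq v\in\calV_{K_0}$ one gets $L_\alpha(v)=\sup_h\diam\pi_{h(\alpha)}(\mu)$, which is uniformly bounded but in general positive, not $0$ as you wrote. So the appeal to Rosendal really needs $L_\alpha$ to be bounded on the identity neighbourhood $\calV_{K_0}$, and that is exactly what your fallback supplies.

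The only other slip is harmless. The boundary-parallel case you allow for never occurs, since such a curve can be isotoped off $K_0$ and is therefore peripheral. This matters: with the convention $\tw=0$, the inequality $\tw_\gamma(\mu,fh(\mu))\le\tw_\gamma(\mu,f(\mu))+\tw_\gamma(f(\mu),fh(\mu))$ would fail if only $\pi_\gamma(f(\mu))$ were empty.
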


\begin{proof}
The fact that $L_\alpha$ is a length function is exactly Lemma~\ref{Lem:islengthfunction}
applied to $\calR=\{A_\alpha\}$ (Section~\ref{subsec:LcalR}).  Since $\mathcal{S}$ is CB, by
\cite[Proposition~2.7(5)]{Rosendal} and Remark~\ref{rem:Rosendal-bounded},
$L_\alpha$ is bounded on $\mathcal{S}$: set
\[
M_\alpha:=\sup_{s\in\mathcal{S}} L_\alpha(s)<\infty.
\]
Then \eqref{eq:Lalpha-wordlower} follows from subadditivity:
if $g=s_1\cdots s_n$ with $s_i\in\mathcal{S}$, then
\[
L_\alpha(g)\le \sum_{i=1}^n L_\alpha(s_i)\le n M_\alpha,
\]
hence
$\Norm{g}_{\mathcal{S}}=n\ge L_\alpha(g)/M_\alpha$.
\end{proof}

\begin{lemma}\label{lem:twist-linear}
Let $\alpha$ be a non-peripheral curve and let $D_\alpha$ be the Dehn twist about $\alpha$.
There exists $c_\alpha>0$ such that for all $n\in\ZZ$,
\[
L_\alpha(D_\alpha^n)\ge c_\alpha\,|n|.
\]
\end{lemma}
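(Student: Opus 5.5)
The plan is to bound $L_\alpha(D_\alpha^n)$ from below by a single term of the supremum in \eqref{eq:Lalpha-def}, namely the term $h=\id$. This gives
\[
L_\alpha(D_\alpha^n)\ \ge\ \tw_\alpha\big(\mu,D_\alpha^n(\mu)\big),
\]
and it then suffices to show that the right-hand side grows at least linearly in $|n|$. Throughout, $\mu$ is the marking on $K_0$ fixed in Section~\ref{subsec:div-step1}. (If $\alpha$ happens to miss $K_0$, I would first replace it by a translate meeting $K_0$; see below.)

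\emph{Step 1: $\pi_\alpha(\mu)\neq\emptyset$.} Since $\alpha$ is non-peripheral, Definition~\ref{def2} gives $\alpha\cap K_0\neq\emptyset$. Here I read this as essential intersection, as in the remark preceding Definition~\ref{Def:lengthfunctionR}. Suppose first that $\alpha$ is not contained in $K_0$ up to isotopy. Then $\alpha$ crosses some boundary curve of $K_0$. Because a marking fills $K_0$, some curve of $\mu$ crosses that boundary curve's neighborhood, and hence meets $\alpha$ essentially. Now suppose instead that $\alpha$ is an essential curve in $K_0$, or is parallel to a boundary component of $K_0$. In the first case, the filling property of $\mu$ directly gives a curve of $\mu$ meeting $\alpha$. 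In the boundary-parallel case, $\alpha$ misses $\mu$, and I would handle it by translation. The curve $\alpha$ separates off a single complementary component, which is an end neighborhood, so by Lemma~\ref{lem:np-char} this is the peripheral situation and cannot occur for non-peripheral $\alpha$. Alternatively, one can replace $\alpha$ by a translate $h(\alpha)$ crossing $K_0$ and use the term $h$, together with $D_{h(\alpha)}=hD_\alpha h^{-1}$ and the equivariance identity from Lemma~\ref{Lem:islengthfunction}. The upshot of this step is that we may pick a curve $\beta\in\mu$ with $i(\alpha,\beta)>0$.

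\emph{Step 2: linear growth in the annular graph.} By Remark~\ref{rem:twist-translation}, $D_\alpha$ acts on $\calC(\alpha)$ by translation. Concretely, $d_\alpha(\beta,D_\alpha^n\beta)\ge |n|-2$, by the standard computation of Masur--Minsky \cite{MM2}; this is the annular lift counting, where twisting $n$ times changes the algebraic intersection of lifts by $n$. Since $\pi_\alpha(\mu)\supseteq\pi_\alpha(\beta)$, and the diameter appearing in Definition~\ref{def:Twisting} dominates this particular pair, we get
\[
\tw_\alpha\big(\mu,D_\alpha^n\mu\big)\ \ge\ |n|-2.
\]
This yields $L_\alpha(D_\alpha^n)\ge |n|-2$. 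For $|n|\ge 4$ this is at least $|n|/2$. For $1\le|n|\le3$ we use $L_\alpha(D_\alpha^n)\ge \tw_\alpha(\mu,D_\alpha^n\mu)\ge 1$, since the arcs $\beta$ and $D_\alpha^n\beta$ are distinct and intersect. Taking $c_\alpha=1/3$ then covers all $n\in\ZZ$, with $n=0$ trivial.

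The main obstacle is Step 1: ensuring that the fixed marking $\mu$ on $K_0$ actually projects nontrivially to $\alpha$, or to a translate of $\alpha$. This requires checking carefully that non-peripherality excludes the degenerate case in which $\alpha$ is boundary-parallel to $K_0$. I would handle that degenerate case by the translation trick above, since $L_\alpha$ takes a supremum over all translates.
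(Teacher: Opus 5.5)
Your proposal is correct and is essentially the paper's proof: bound $L_\alpha(D_\alpha^n)$ below by the $h=\id$ term of the supremum and use that $D_\alpha$ acts by translation on the annular arc graph. Your Step~1 just spells out why $\pi_\alpha(\mu)\neq\emptyset$, which the paper takes directly from non-peripherality.

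One small correction to Step~1. The boundary-parallel case is ruled out directly by Definition~\ref{def2} with $g=\id$, since such a curve isotopes off $K_0$. It is not ruled out by Lemma~\ref{lem:np-char}, which only gives the implication peripheral $\Rightarrow$ some side is small, the opposite of what you invoke. The translation-trick fallback is unnecessary, and as written it would not work: the $h$-term measures twisting of $D_\alpha^n\mu$ about $h(\alpha)$, not twisting of $D_{h(\alpha)}^n\mu$.
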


\begin{proof}
Since $\alpha$ is non-peripheral, every translate $h(\alpha)$ intersects $K_0$ essentially
(Definition~\ref{def2}), hence $\pi_{h(\alpha)}(\mu)\neq\emptyset$ and the twisting
distance $\tw_{h(\alpha)}(\mu,D_\alpha^n(\mu))$ is defined.
Taking $h=\id$ in \eqref{eq:Lalpha-def} gives
\[
L_\alpha(D_\alpha^n)\ge \tw_{\alpha}\big(\mu,D_\alpha^n(\mu)\big),
\]
and by Remark~\ref{rem:twist-translation} the right-hand side grows coarsely linearly in $|n|$.
\end{proof}

\subsection{A projection to $\Cnp(\Sigma)$}\label{subsec:div-step2}

Assume throughout this subsection that $\zeta(\Sigma)\ge 5$ so that $\Cnp(\Sigma)$ is connected
(Theorem~\ref{thm:Cnp-connected} in Section~\ref{subsec:cnp-connected}).
Fix a base vertex $\alpha_0\in \Cnp(\Sigma)$ and define
\begin{equation}\label{eq:pi-def}
\pi:\Map(\Sigma)\to \Cnp(\Sigma),\qquad \pi(g):=g(\alpha_0).
\end{equation}

\begin{remark}\label{rem:pi-coarse-choice}
If $\alpha_0'$ is another base curve in $\Cnp(\Sigma)$, then for every $g$,
\[
d_{\Cnp}\big(g(\alpha_0),g(\alpha_0')\big)=d_{\Cnp}(\alpha_0,\alpha_0')
\]
because $\Map(\Sigma)$ acts by graph automorphisms on $\Cnp(\Sigma)$. Thus changing the base curve
changes $\pi$ by a uniformly bounded amount. For the arguments below, however, we will choose
$\alpha_0$ so that $\alpha_0\subset K_0$.
\end{remark}

\subsection{$\pi$ is Lipschitz}\label{subsec:div-step3}

\begin{lemma}\label{lem:pi-Lipschitz}
There exists $L\ge 1$ such that for every generator $s\in\mathcal{S}$,
\[
d_{\Cnp}\big(\alpha_0,s(\alpha_0)\big)\le L.
\]
Consequently, for all $g,h\in\Map(\Sigma)$,
\[
d_{\Cnp}\big(\pi(g),\pi(h)\big)\le L\,d_{\mathcal{S}}(g,h).
\]
\end{lemma}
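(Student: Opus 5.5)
The plan is to bound $d_{\Cnp}(\alpha_0,s(\alpha_0))$ uniformly over the generating set $\mathcal{S}=\mathcal{G}$ of Theorem~\ref{thm:generating-set}, splitting it into its pieces, and then deduce the Lipschitz statement by the triangle inequality. For the finitely many elements $f_{A,B}$, $f_A$, $h_\sigma$ and the elements of $\mathcal{L}$, each $s(\alpha_0)$ is a single non-peripheral curve, since non-peripherality is $\Map(\Sigma)$-invariant. Because $\Cnp(\Sigma)$ is connected by Theorem~\ref{thm:Cnp-connected}, each distance $d_{\Cnp}(\alpha_0,s(\alpha_0))$ is finite. A maximum over finitely many such numbers is finite, so this part needs no further work. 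One could also give an explicit bound via Remark~\ref{rem:uniform}, namely $2i(\alpha_0,s(\alpha_0))+2$, but finiteness suffices.

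The infinite part is $\calV_{K_0}$. Here I use the choice $\alpha_0\subset K_0$ from Remark~\ref{rem:pi-coarse-choice}, taking for instance the non-peripheral curve of Lemma~\ref{lem:np-curve-K0} inside $K_0$. Every $\phi\in\calV_{K_0}$ restricts to the identity on $K_0$, so $\phi(\alpha_0)=\alpha_0$ up to isotopy and $d_{\Cnp}(\alpha_0,\phi(\alpha_0))=0$. If the paper's $\calS$ is instead a general CB generating set, I would reduce to $\mathcal{G}$: every $s\in\calS$ has bounded $\mathcal{G}$-word length, because the identity is a quasi-isometry by Theorem~\ref{thm:Rosendall}. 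Combining this with the bound for $\mathcal{G}$ and the triangle inequality gives a uniform constant $L$.

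For the consequence, write $g^{-1}h=s_1\cdots s_n$ with $n=d_{\mathcal S}(g,h)$. Since $\Map(\Sigma)$ acts by graph automorphisms of $\Cnp(\Sigma)$,
\[
d_{\Cnp}\bigl(g(\alpha_0),h(\alpha_0)\bigr)=d_{\Cnp}\bigl(\alpha_0,s_1\cdots s_n(\alpha_0)\bigr)\le\sum_{j=1}^n d_{\Cnp}\bigl(s_1\cdots s_{j-1}(\alpha_0),s_1\cdots s_j(\alpha_0)\bigr).
\]
Each term equals $d_{\Cnp}(\alpha_0,s_j(\alpha_0))\le L$, which gives $d_{\Cnp}(\pi(g),\pi(h))\le L\,d_{\mathcal S}(g,h)$.

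The main obstacle I expect is the handling of the CB but uncountable part of the generating set. The trick of placing $\alpha_0\subset K_0$ resolves this completely for $\calV_{K_0}$. For an arbitrary CB generating set, one has to rely on the reduction to $\mathcal{G}$ described above rather than on any direct control of $s(\alpha_0)$.
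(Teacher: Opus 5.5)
Your proof is correct and has the same structure as the paper's: elements of $\calV_{K_0}$ fix $\alpha_0\subset K_0$, the finitely many remaining generators are bounded uniformly, and the Lipschitz estimate follows from equivariance and the triangle inequality. The only difference is that you bound the finitely many non-$\calV_{K_0}$ generators by connectivity of $\Cnp(\Sigma)$ plus a maximum over a finite set, whereas the paper bounds $i(\alpha_0,s(\alpha_0))$ and invokes Remark~\ref{rem:uniform}; your version avoids the intersection-number control, which the paper only sketches, and your reduction from an arbitrary CB generating set to $\mathcal{G}$ is a correct, harmless extra.
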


\begin{proof}
The second statement follows from the first by writing $h=g s_1\cdots s_n$ with $n=d_{\mathcal{S}}(g,h)$
and using the triangle inequality in $\Cnp(\Sigma)$:
\[
d_{\Cnp}\big(g(\alpha_0),h(\alpha_0)\big)\le \sum_{i=1}^n d_{\Cnp}\big(gs_1\cdots s_{i-1}(\alpha_0),gs_1\cdots s_i(\alpha_0)\big)
=\sum_{i=1}^n d_{\Cnp}\big(\alpha_0,s_i(\alpha_0)\big)\le nL.
\]
Thus it remains to prove the existence of $L$.

Fix $\alpha_0\subset K_0$ once and for all.  For each of the finitely many types of generators in
Theorem~\ref{thm:generating-set} (Section~\ref{generatingset}) one checks that the geometric
intersection number $i(\alpha_0,s(\alpha_0))$ is uniformly bounded independent of $s\in\mathcal{S}$:
\begin{itemize}
\item elements of $\calV_{K_0}$ fix $K_0$ pointwise, hence fix $\alpha_0$;
\item local generators in $\mathcal{L}$ are either Dehn twists or half twists, so they change
$\alpha_0$ by a uniformly bounded amount; and
\item the shift maps and the finitely many maximal-end permutations have supports and boundary data
contained in a fixed finite-type subsurface (as in the definition of $\mathcal{L}$), so their effect on
$\alpha_0$ is controlled as well.
\end{itemize}
Given a uniform bound on $i(\alpha_0,s(\alpha_0))$ and
in view of Remark~\ref{rem:uniform}, the surgery argument from
Section~\ref{subsec:cnp-connected} produces a uniformly bounded length 
path in $\Cnp(\Sigma)$ from $\alpha_0$ to $s(\alpha_0)$.
This yields the desired $L$.
\end{proof}

\subsection{Commuting Dehn twists for generators}\label{subsec:div-step4}

\begin{lemma}\label{lem:commuting-twist}
Fix $\alpha_0\subset K_0$ once and for all. There exists a constant $C_1\ge 1$ with the following
property. For every $s\in\mathcal{S}$ there exists a curve $\alpha_s\in \Cnp(\Sigma)$ such that
\begin{itemize}
\item $d_{\Cnp}(\alpha_0,\alpha_s)\le C_1$; and
\item the Dehn twist $D_{\alpha_s}$ commutes with $s$.
\end{itemize}
\end{lemma}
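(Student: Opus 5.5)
The plan is to go through the generating set $\mathcal{S}=\mathcal{G}$ of Theorem~\ref{thm:generating-set} type by type. For each $s$ I will produce a non-peripheral curve $\alpha_s$ that is either disjoint from the support of $s$ or fixed by $s$. Both suffice, because $sD_{\alpha_s}s^{-1}=D_{s(\alpha_s)}$, so $D_{\alpha_s}$ commutes with $s$ whenever $s(\alpha_s)=\alpha_s$.

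The bound $d_{\Cnp}(\alpha_0,\alpha_s)\le C_1$ will come from Remark~\ref{rem:uniform}. That remark gives $d_{\Cnp}(\alpha_0,\alpha_s)\le 2i(\alpha_0,\alpha_s)+2$, so it suffices to choose $\alpha_s$ from a finite list of curves, or more generally to keep $i(\alpha_0,\alpha_s)$ uniformly bounded. The generators outside $\calV_{K_0}$ form a finite set, so only the infinitely many elements of $\calV_{K_0}$ need a uniform choice.

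The cases are as follows.

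\textbf{Elements of $\calV_{K_0}$.} These fix $K_0$ pointwise, so I take $\alpha_s=\alpha_0\subset K_0$. Then $s(\alpha_0)=\alpha_0$, the twist commutes with $s$, and the distance is $0$.

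\textbf{Local generators in $\mathcal{L}$.} These are Dehn twists or half twists supported in the fixed compact surface $K\supseteq K_0$. For each of these finitely many generators, I will pick a non-peripheral curve disjoint from its support. Candidates are a curve $\alpha_0'\subset K_0$ disjoint from the twisting curve, or alternatively a translate $f^j(\gamma)$ of the curve from Theorem~\ref{thm:infinite-rank} pushed outside $K$. Such translates are non-peripheral and disjoint from $K$-supported maps for $|j|$ large. The catch is that $f^j(\gamma)$ still has to meet $K_0$. Because $\Cnp(\Sigma)$ is connected (Theorem~\ref{thm:Cnp-connected}), each of the finitely many chosen curves lies at some finite distance from $\alpha_0$, and $C_1$ is taken as the maximum of these distances.

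\textbf{Shift maps $f_{A,B}$, $f_A$ and permutations $h_\sigma$.} For a shift, the support is a neighbourhood of a bi-infinite chain of subsurfaces running from $A$ to $B$, or from $A$ to $P$. Since $\zeta(\Sigma)\ge 5$, the surface $K_0$ has at least five boundary components. I will choose a separating curve $\beta\subset K_0$ that cuts off the boundary components of $K_0$ facing two regions not involved in the shift. Using the $\zeta$-count of Lemma~\ref{lem:small-ends}, both sides of $\beta$ are not small, so $\beta$ is non-peripheral by Lemma~\ref{lem:np-char}. If the support of the shift can be arranged to avoid $\beta$, the two commute.

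For $h_\sigma$, I will choose a separating curve in $K_0$ whose two sides form $\sigma$-invariant unions of regions, and use the freedom in choosing $h_\sigma$ to make it preserve that curve. Alternatively, I could choose $h_\sigma$ to preserve $K_0$ and take a curve in $K_0$ invariant under the induced finite-order map.

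\textbf{Main obstacle.} The hard step is the shift maps when the shift track passes through most of the regions, or when $\zeta$ is small enough that no two uninvolved regions remain. A shift only involves two sets of $\calA\cup\calP$, so at least three boundary components of $K_0$ face uninvolved regions. Cutting off two of them gives one side with at least two of the $\zeta$-contributions. The remaining difficulty is that the other side must also be non-small, which needs the five-count carefully.

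If a separating curve disjoint from the support fails, I would fall back on a curve $\beta$ with $f(\beta)=\beta$. Concretely, I would take the boundary of a neighbourhood of $K_0$ union the entire shift support, which is invariant under $f$, and check its non-peripherality with Lemma~\ref{lem:np-char}.
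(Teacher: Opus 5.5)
Your strategy is the paper's. You inspect the generators of Theorem~\ref{thm:generating-set} type by type, take $\alpha_s=\alpha_0$ on $\calV_{K_0}$, find a non-peripheral curve fixed by each of the finitely many remaining generators, and let $C_1$ be the largest resulting distance, which is finite by Theorem~\ref{thm:Cnp-connected}. The paper's own proof is only a terse inspection. As written, though, your proposal does not close the shift and $\mathcal{L}$ cases, and the fallbacks you offer fail.

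\textbf{Shift maps.} No curve produced by your fallback can work. Every boundary component of a neighbourhood of $K_0\cup\mathrm{supp}(f)$ is disjoint from $K_0$, so every closed one is peripheral by Lemma~\ref{lem:intersect-anchor} with $g=\id$. Your ``main obstacle'' is not real, however. Take the support of $f$ (as one may) to meet $K_0$ in a band $N(a)$ around an arc $a$ joining the boundary components facing the two involved members ($A,B$, or $A,P$). There are at least three uninvolved members since $\zeta(\Sigma)\ge 5$. Choose $\beta\subset K_0\setminus N(a)$ bounding a pair of pants with two boundary components that face uninvolved members. The other side of $\beta$ then contains both involved members, so each side of $\beta$ carries at least two boundary components of $K_0$. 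The argument of Lemma~\ref{lem:np-curve-K0} now shows $\beta$ is non-peripheral, and $f(\beta)=\beta$ because $\beta$ misses the support. Certify non-smallness through that case analysis (a union of two distinct members of $\calA\cup\calP$ is never small), not through the $\zeta$-count of Lemma~\ref{lem:small-ends}. Its criterion ``$\zeta(X)\ge 2\Rightarrow X$ not small'' fails for $X=A$ when $x_A$ is a unique maximal accumulation point: there $\zeta(A)\ge 2$, yet $A$ is small.

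\textbf{Local generators.} Both of your candidates have problems. The translates $f^j(\gamma)$ from Theorem~\ref{thm:infinite-rank} can never be pushed outside $K$, since by Definition~\ref{def2} with $g=\id$ every non-peripheral curve meets $K_0\subset K$. A non-peripheral curve of $K_0$ missing the twist curve $c$ is never shown to exist, and it need not exist, e.g.\ if the arcs of $c\cap K_0$ fill $K_0$. The fix is short. If $c$ is non-peripheral, take $\alpha_s=c$. Otherwise pick $g$ with $g(c)\cap K_0=\emptyset$ (Lemma~\ref{lem:intersect-anchor}) and a non-peripheral $\beta\subset K_0$ from Lemma~\ref{lem:np-curve-K0}. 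Then $\alpha_s=g^{-1}(\beta)$ is non-peripheral and disjoint from $c$.

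For a half twist with support $Y$, argue the same way with $c=\partial Y$. If $\partial Y$ is peripheral, the side of $g(\partial Y)$ lying in a complementary component of $K_0$ must be $g(Y)$. This is because $\Sigma\setminus Y$ contains all ends of at least three members and so is not small. Hence $g^{-1}(\beta)$ misses $Y$.

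\textbf{Permutations $h_\sigma$.} Your first option works once the generating permutations are chosen to be transpositions. Then $h_\sigma$ can be a half twist preserving the curve of $K_0$ that cuts off the two swapped boundary components. Your alternative, using an invariant curve of a finite-order map of $K_0$, can fail: an order-$5$ rotation of a five-holed sphere fixes no essential curve.
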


\begin{proof}
We choose $\alpha_s$ by a case-by-case inspection of the generators in
Theorem~\ref{thm:generating-set} (Section~\ref{generatingset}).

If $s\in \calV_{K_0}$, take $\alpha_s=\alpha_0$, so $s$ fixes $\alpha_0$ and hence commutes with $D_{\alpha_0}$.
If $s\in\mathcal{L}$ is a Dehn twist or a half-twist around a curve $\gamma$, take $\alpha_s$ to be a non-peripheral curve disjoint from $\gamma$. Such a curve exists in the five-holed sphere guaranteed by $\zeta(\Sigma)\ge 5$ (Section~\ref{subsec:cnp-connected}).
For the finitely many maximal-end permutations and shift maps in $\mathcal{S}$, choose $\alpha_s$ among finitely many curves in the fixed finite-type region where these maps meet $K_0$, so that $\alpha_s$ is preserved by $s$.
In all cases, $s(\alpha_s)=\alpha_s$ and hence $s$ commutes with $D_{\alpha_s}$.

Since there are only finitely many types of generators, we may take $C_1$ to be the maximum of the (finite) distances $d_{\Cnp}(\alpha_0,\alpha_s)$ arising from the
choices above.
\end{proof}

\subsection{A ``linked'' word decomposition}\label{subsec:div-step5}

\begin{proposition}\label{prop:link}
There exists a constant $C_0\ge 1$ with the following property.
For every $g_1,g_2\in\Map(\Sigma)$ there exist an integer $n\ge 1$, a sequence of curves
\[
\alpha_1,\alpha_2,\dots,\alpha_n\in \Cnp(\Sigma),
\]
and a sequence of elements $s_1,\dots,s_n\in \mathcal{S}\cup\{\id\}$ such that:
\begin{itemize}
\item $D_{\alpha_i}$ commutes with $s_i$ for every $i$;
\item $d_{\Cnp}(\alpha_i,\alpha_{i+1})\le 1$ for every $1\le i<n$;
\item $s_1s_2\cdots s_n=g_1^{-1}g_2$; and
\item $n\le C_0\,\Norm{g_1^{-1}g_2}_{\mathcal{S}}$.
\end{itemize}
Moreover, the curves $\alpha_i$ can be chosen from a finite subset $\mathcal{F}\subset\Cnp(\Sigma)$
depending only on $\Sigma$ and $\mathcal{S}$.
\end{proposition}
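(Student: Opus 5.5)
Write $g_1^{-1}g_2=t_1t_2\cdots t_m$ as a geodesic word in $\mathcal{S}$, so $m=\Norm{g_1^{-1}g_2}_{\mathcal{S}}$. For each letter $t_j$, Lemma~\ref{lem:commuting-twist} gives a curve $\alpha_{t_j}$ with $d_{\Cnp}(\alpha_0,\alpha_{t_j})\le C_1$ and $D_{\alpha_{t_j}}$ commuting with $t_j$. Consecutive curves $\alpha_{t_j},\alpha_{t_{j+1}}$ are then within distance $2C_1$ of each other. We need distance $\le 1$, so we insert padding letters $\id$ and join the curves by geodesic paths in $\Cnp(\Sigma)$. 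Every curve commutes trivially with $\id$, so padding costs nothing except length.

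\textbf{Step 1: a finite set of curves.} Since there are only finitely many types of generators, the proof of Lemma~\ref{lem:commuting-twist} uses finitely many curves $\alpha_s$. The only exceptions are $\calV_{K_0}$, where we use $\alpha_0$, and the local generators. We first check and arrange this explicitly. Each of the finitely many local generators $\mathcal{L}$, each shift map and each $h_\sigma$ gets one curve, and $\alpha_0$ serves all of $\calV_{K_0}$. Call this finite collection $\mathcal{F}_0$. By Theorem~\ref{thm:Cnp-connected} and Remark~\ref{rem:uniform}, for each pair $\beta,\beta'\in\mathcal{F}_0$ we fix a geodesic in $\Cnp(\Sigma)$ from $\beta$ to $\beta'$. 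Let $\mathcal{F}$ be the union of $\mathcal{F}_0$ and the vertices of these finitely many geodesics. Then $\mathcal{F}$ is finite, and its diameter is at most $2C_1$.

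\textbf{Step 2: building the sequence.} Process the letters in order. Emit the pair $(\alpha_{t_1},t_1)$. Between $t_j$ and $t_{j+1}$, let $\beta_0=\alpha_{t_j},\beta_1,\dots,\beta_k=\alpha_{t_{j+1}}$ be the fixed geodesic. Emit the pairs $(\beta_1,\id),\dots,(\beta_{k-1},\id)$, followed by $(\alpha_{t_{j+1}},t_{j+1})$. Consecutive curves are then adjacent or equal, so their distance is at most $1$. Each $D_{\alpha_i}$ commutes with its $s_i$. The product of the $s_i$ equals $t_1\cdots t_m=g_1^{-1}g_2$. The total length satisfies $n\le m+m\cdot 2C_1\le C_0 m$ with $C_0=2C_1+1$. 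If $m=0$, take $n=1$, $s_1=\id$, $\alpha_1=\alpha_0$.

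\textbf{Main obstacle.} The delicate point is Step 1, namely that one uniform finite family $\mathcal{F}$ suffices. $\calV_{K_0}$ is uncountable, but all its elements share $\alpha_0$. The local and shift generators are finitely many, provided the choices in Lemma~\ref{lem:commuting-twist} are made once per generator rather than per word. Once this is fixed, the rest is bookkeeping.
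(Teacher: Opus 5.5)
Your proposal is correct and takes essentially the paper's route. You write $g_1^{-1}g_2$ as a geodesic word $t_1\cdots t_m$, use the finitely many commuting curves from Lemma~\ref{lem:commuting-twist}, and join each pair of those curves by a fixed path in $\Cnp(\Sigma)$ padded with identity letters, so that $C_0$ is one plus the maximal path length; your choice of geodesics just makes this explicitly $2C_1+1$.

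Two slips, neither affecting the argument:
\begin{itemize}
\item You state that $\mathcal{F}$ has diameter at most $2C_1$, but the justified bound is $4C_1$, since a vertex on a geodesic between two curves of $\mathcal{F}_0$ is only guaranteed to lie within $2C_1$ of $\alpha_0$. You never use this bound.
\item When $m=0$, no choice can satisfy both $n\ge 1$ and $n\le C_0 m$. Your convention $n=1$ therefore really reads the bound as $n\le C_0\max(m,1)$. The paper's proof ignores this edge case as well.
\end{itemize}
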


\begin{proof}
Write $g_1^{-1}g_2$ as a word of minimal length in $\mathcal{S}$:
\[
g_1^{-1}g_2=t_1t_2\cdots t_m,\qquad t_j\in\mathcal{S},\quad m=\Norm{g_1^{-1}g_2}_{\mathcal{S}}.
\]
For each $t_j$, choose a curve $\beta_j\in\Cnp(\Sigma)$ as in Lemma~\ref{lem:commuting-twist}, so that
$D_{\beta_j}$ commutes with $t_j$.

By Lemma~\ref{lem:commuting-twist}, the curves $\beta_j$ may be chosen from a finite set
$\mathcal{B}\subset\Cnp(\Sigma)$ depending only on $\Sigma$ and $\mathcal{S}$.

For each ordered pair $(\beta,\beta')\in\mathcal{B}\times\mathcal{B}$, fix once and for all an edge path
\[
P(\beta,\beta')=(\gamma^{\beta,\beta'}_0,\gamma^{\beta,\beta'}_1,\dots,\gamma^{\beta,\beta'}_{\ell(\beta,\beta')})
\]
in $\Cnp(\Sigma)$ from $\beta$ to $\beta'$, so that
\[
\gamma^{\beta,\beta'}_0=\beta,\qquad \gamma^{\beta,\beta'}_{\ell(\beta,\beta')}=\beta',
\]
and consecutive vertices are adjacent in $\Cnp(\Sigma)$.
Let $\mathcal{F}$ be the union of the vertex sets of all these paths. Since $\mathcal{B}$ is finite,
the set $\mathcal{F}$ is finite.

We now define the sequence $(\alpha_i,s_i)$ by concatenating blocks. For each $1\le j\le m-1$, take:
\begin{itemize}
\item first the letter $t_j$ labelled by the curve $\beta_j$;
\item then, for each vertex $\gamma^{\beta_j,\beta_{j+1}}_k$ with $1\le k\le \ell(\beta_j,\beta_{j+1})-1$,
insert a letter $\id$ labelled by $\gamma^{\beta_j,\beta_{j+1}}_k$.
\end{itemize}
Finally, append the letter $t_m$ labelled by $\beta_m$.

By construction, every $s_i$ belongs to $\mathcal{S}\cup\{\id\}$, and each $D_{\alpha_i}$ commutes with $s_i$:
this is immediate for the identity letters, and for the letters $t_j$ it follows from the choice of $\beta_j$.
Also, the product of all letters is
\[
t_1t_2\cdots t_m=g_1^{-1}g_2,
\]
since the inserted identity letters do not change the product.

Moreover, consecutive curves are either equal or adjacent in $\Cnp(\Sigma)$ by construction, so in particular
\[
d_{\Cnp}(\alpha_i,\alpha_{i+1})\le 1
\qquad\text{for all }1\le i<n.
\]
Since $\mathcal{B}$ is finite, there is a uniform bound
\[
L_0:=\max\{\ell(\beta,\beta'):\beta,\beta'\in\mathcal{B}\}<\infty.
\]
Thus each block contributes at most $L_0+1$ letters, and hence
\[
n\le m(L_0+1).
\]
Therefore the conclusion holds with $C_0=L_0+1$. Finally, every curve label $\alpha_i$ lies in $\mathcal{F}$.
\end{proof}

\begin{remark}\label{rem:uniform-constants}
In Proposition~\ref{prop:link}, all twist curves belong to the fixed finite set $\mathcal{F}$.
Hence all constants associated to these curves may be chosen uniformly.

For each $\alpha\in\mathcal{F}$, let $M_\alpha$ be the constant from
Lemma~\ref{lem:Lalpha-length}, and let $(q_\alpha,Q_\alpha)$ be quasi-geodesic constants
for the twist line
\[
\ZZ\to (\Map(\Sigma),d_{\mathcal S}),\qquad m\mapsto gD_\alpha^m
\]
from Lemma~\ref{lem:twist-quasiline}.  Also choose $B_\alpha\ge 0$ so that:
\begin{enumerate}
\item the projection $\pi_\alpha(\mu)$ has diameter at most $B_\alpha$ in the annular curve graph of $\alpha$; and
\item for every $m\in\ZZ$,
\[
\tw_\alpha\big(\mu,D_\alpha^m(\mu)\big)\ge |m|-B_\alpha.
\]
\end{enumerate}
Define
\[
M:=\max_{\alpha\in\mathcal F} M_\alpha,\qquad
q:=\max_{\alpha\in\mathcal F} q_\alpha,\qquad
Q:=\max_{\alpha\in\mathcal F} Q_\alpha,
\]
and
\[
B:=\max_{\alpha\in\mathcal F} B_\alpha,\qquad
A:=2B,\qquad
D_0:=\max_{\alpha\in\mathcal F}\Norm{D_\alpha}_{\mathcal S}.
\]

Finally, choose once and for all a constant
\[
0<\eta<\frac{1}{1+q^2}.
\]
All subsequent estimates depend only on the uniform constants $M,q,Q,A,D_0,\eta$.
\end{remark}

\subsection{Detours using commuting Dehn twists}\label{subsec:div-step6}

We now complete the proof of Theorem~\ref{thm:Div2} by the standard detour construction
(compare \cite[Section~4]{DR}).

\begin{lemma}\label{lem:twist-quasiline}
Let $\alpha\in\Cnp(\Sigma)$ and let $D_\alpha$ be the Dehn twist about $\alpha$. Then the map
\[
\ZZ\to (\Map(\Sigma),d_{\mathcal S}),\qquad m\mapsto gD_\alpha^m
\]
is a quasi-isometric embedding, with constants depending only on $\alpha$ and $\mathcal S$.
\end{lemma}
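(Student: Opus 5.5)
By left-invariance of $d_{\mathcal S}$ it suffices to treat $g=\id$. We will show that $m\mapsto D_\alpha^m$ is a quasi-isometric embedding of $\ZZ$. There are two inequalities to prove, an upper (Lipschitz) bound and a lower bound, and both follow from results already established for the single element $D_\alpha$.

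For the upper bound, set $D:=\Norm{D_\alpha}_{\mathcal S}$. This is finite because $\mathcal S$ generates $\Map(\Sigma)$. Then
\[
d_{\mathcal S}\big(D_\alpha^m,D_\alpha^{m'}\big)=\Norm{D_\alpha^{m'-m}}_{\mathcal S}\le D\,|m-m'|,
\]
by subadditivity of word length. So the map is $D$-Lipschitz.

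For the lower bound, we combine the length function $L_\alpha$ of \eqref{eq:Lalpha-def} with Lemmas~\ref{lem:Lalpha-length} and~\ref{lem:twist-linear}. Since $\alpha$ is non-peripheral, Lemma~\ref{lem:Lalpha-length} gives a constant $M_\alpha<\infty$ with
\[
\Norm{h}_{\mathcal S}\ge L_\alpha(h)/M_\alpha \quad \text{for all } h.
\]
Lemma~\ref{lem:twist-linear} gives $L_\alpha(D_\alpha^n)\ge c_\alpha|n|$, possibly up to an additive constant coming from the coarse linearity in Remark~\ref{rem:twist-translation}. Writing that constant as $B_\alpha$, the relevant estimate is
\[
L_\alpha(D_\alpha^n)\ge \tw_\alpha\big(\mu,D_\alpha^n(\mu)\big)\ge |n|-B_\alpha .
\]
Here $B_\alpha$ bounds the diameter of $\pi_\alpha(\mu)$. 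This projection is nonempty because every translate of $\alpha$ meets $K_0$ essentially (Definition~\ref{def2}), and $\mu$ fills $K_0$. Combining the two displays,
\[
d_{\mathcal S}\big(D_\alpha^m,D_\alpha^{m'}\big)=\Norm{D_\alpha^{m'-m}}_{\mathcal S}\ge \frac{1}{M_\alpha}\big(|m-m'|-B_\alpha\big).
\]
Together with the upper bound, this shows the map is a quasi-isometric embedding with multiplicative constant $\max(D,M_\alpha)$ and additive constant $B_\alpha/M_\alpha$. These constants depend only on $\alpha$ and $\mathcal S$ (and the fixed marking $\mu$), and not on $g$.

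The one step that needs care is the lower bound: it relies on the twist number about $\alpha$ itself, obtained by taking $h=\id$ in the supremum, growing linearly under powers of $D_\alpha$. This is the standard fact that $D_\alpha$ acts on the annular arc graph $\calC(\alpha)$ by translation of distance $|n|$ up to bounded error. The only potential subtlety is nonemptiness of $\pi_\alpha(\mu)$, which is exactly where non-peripherality of $\alpha$ enters. Everything else is formal.
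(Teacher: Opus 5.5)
Your proposal is correct and follows the paper's proof essentially verbatim: you reduce to $g=\id$ by left-invariance, get the Lipschitz bound from $\Norm{D_\alpha^{k}}_{\mathcal S}\le |k|\,\Norm{D_\alpha}_{\mathcal S}$, and get the lower bound by combining Lemma~\ref{lem:Lalpha-length} with Lemma~\ref{lem:twist-linear}. Your explicit tracking of the additive constant $B_\alpha$, which the paper absorbs into the symbol $\succ$, is a harmless refinement.
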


\begin{proof}
Since left multiplication by $g$ is an isometry of $(\Map(\Sigma),d_{\mathcal S})$,
it suffices to consider the map $m\mapsto D_\alpha^m$.

For the upper bound,
\[
d_{\mathcal S}(D_\alpha^m,D_\alpha^n)
=\Norm{D_\alpha^{m-n}}_{\mathcal S}
\le |m-n|\,\Norm{D_\alpha}_{\mathcal S}.
\]

For the lower bound, Lemma~\ref{lem:Lalpha-length} and Lemma~\ref{lem:twist-linear} give
\[
d_{\mathcal S}(D_\alpha^m,D_\alpha^n)
=\Norm{D_\alpha^{m-n}}_{\mathcal S}
\ge \frac{1}{M_\alpha}\,L_\alpha(D_\alpha^{m-n})
\succ |m-n|.
\]
Thus $m\mapsto D_\alpha^m$ is a quasi-isometric embedding, and so is
$m\mapsto gD_\alpha^m$.
\end{proof}

\begin{lemma}[Uniform escape estimates]\label{lem:extending-branch}
Let $\mathcal F\subset\Cnp(\Sigma)$ and the constants $M,q,Q,A,\eta$ be as in
Remark~\ref{rem:uniform-constants}. Then there exists $T_1\ge 0$, depending only on
$\Sigma$ and $\mathcal S$, such that the following hold.

\begin{enumerate}
\item[\textup{(1)}] \emph{(Extending branch)} For every $\alpha\in\mathcal F$ and every
$g\in\Map(\Sigma)$, there exists a sign $\varepsilon\in\{\pm1\}$ such that
\begin{equation}\label{eq:extending-branch}
\Norm{gD_\alpha^{\varepsilon m}}_{\mathcal S}\ge \eta\,\Norm{g}_{\mathcal S}-T_1
\qquad\text{for all }m\ge 0.
\end{equation}

\item[\textup{(2)}] \emph{(Uniform linear lower bound: one twist)} For every
$\alpha\in\mathcal F$, every $g\in\Map(\Sigma)$, and every $m\in\ZZ$,
\begin{equation}\label{eq:one-twist-linear}
\Norm{gD_\alpha^m}_{\mathcal S}\ge
\frac{|m|-M\,\Norm{g}_{\mathcal S}-A}{M}.
\end{equation}

\item[\textup{(3)}] \emph{(Uniform linear lower bound: two disjoint twists)} If
$\alpha,\beta\in\mathcal F$ are disjoint, then for every $g\in\Map(\Sigma)$ and every
$m,n\in\ZZ$,
\begin{equation}\label{eq:two-twist-linear}
\Norm{gD_\alpha^mD_\beta^n}_{\mathcal S}\ge
\frac{\max\{|m|,|n|\}-M\,\Norm{g}_{\mathcal S}-A}{M}.
\end{equation}
\end{enumerate}
\end{lemma}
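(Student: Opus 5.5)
I would prove the three parts in the order (2), (3), (1), since (1) follows from (2) together with the quasi-geodesic property of twist lines from Lemma~\ref{lem:twist-quasiline}.

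\textbf{Part (2).} Fix $\alpha\in\mathcal F$, $g$ and $m$. Since $L_\alpha$ is a length function, subadditivity applied to $D_\alpha^m=g^{-1}(gD_\alpha^m)$ gives
\[
L_\alpha(D_\alpha^m)\le L_\alpha(g^{-1})+L_\alpha(gD_\alpha^m).
\]
By symmetry and \eqref{eq:Lalpha-wordlower}, $L_\alpha(g^{-1})=L_\alpha(g)\le M\Norm{g}_{\mathcal S}$. Taking $h=\id$ in the supremum and using item (2) of Remark~\ref{rem:uniform-constants},
\[
L_\alpha(D_\alpha^m)\ge \tw_\alpha(\mu,D_\alpha^m(\mu))\ge |m|-B\ge |m|-A.
\]
Combining these with $L_\alpha(gD^m_\alpha)\le M\Norm{gD_\alpha^m}_{\mathcal S}$ gives \eqref{eq:one-twist-linear}.

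\textbf{Part (3).} The argument is the same, with $D_\alpha^mD_\beta^n$ in place of $D_\alpha^m$. We may assume $|m|\ge|n|$; otherwise swap the roles, which is legitimate because the twists commute. The key point is that $\tw_\alpha(\mu,D_\alpha^mD_\beta^n(\mu))\ge |m|-A$. Since $\beta$ is disjoint from $\alpha$, the twist $D_\beta$ does not change the annular projection to $\alpha$ beyond the diameter error $B$. So the twisting is $\tw_\alpha(\mu,D_\alpha^m(\mu))$ up to an additive $B$, and $A=2B$ absorbs both errors. Here I need that every arc of $\pi_\alpha(\mu)$ is $D_\beta$-invariant up to the coarse ambiguity; this follows because $D_\beta$ is supported away from a neighborhood of $\alpha$ and therefore lifts to the annular cover as a map fixing the boundary-to-boundary arcs up to isotopy rel endpoints.

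\textbf{Part (1).} This is the main obstacle. The first idea is a "two branches cannot both come close to the identity" argument. Suppose for both signs there were $m_\pm\ge 0$ with $\Norm{gD_\alpha^{\pm m_\pm}}_{\mathcal S}<\eta\Norm{g}_{\mathcal S}-T_1$. Then the points $gD_\alpha^{m_+}$ and $gD_\alpha^{-m_-}$ lie within distance $2\eta\Norm{g}$ of each other. By Lemma~\ref{lem:twist-quasiline}, $m_++m_-\le q(2\eta\Norm g)+qQ$. Also, each $m_\pm\ge (\Norm g-\eta\Norm g)/D_0$ up to constants, by the triangle inequality through $g$. Since $\eta<1/(1+q^2)$, these two bounds contradict each other once the constants are arranged suitably, with $T_1$ absorbing the additive terms.

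A subtlety is that this only shows one sign works for some specific $m$, not for all $m\ge0$ simultaneously. I would handle this by choosing $\varepsilon$ to be the sign whose branch's closest point to $\id$ is farther away, and then repeating the argument with "closest point" in place of fixed $m$. The quasi-geodesic line $m\mapsto gD_\alpha^m$ passes through $g$ at $m=0$. If both rays dipped into $B(\id,\eta\Norm g - T_1)$, the subpath between the two dips would contain $g$, which lies at distance at least $(1-\eta)\Norm g$ from the ball. This forces that subpath to have length at least $\sim(1-\eta)\Norm g$, while its endpoints are within $2\eta\Norm g$ of each other. This contradicts the quasi-geodesic constants given the choice of $\eta$.

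If the constant bookkeeping fails, I would fall back on part (2): for $|m|\ge 2M\Norm g+A$ both branches are already far from $\id$. It then suffices to treat the bounded window $|m|\le 2M\Norm g+A$, using the same quasi-geodesic contradiction on a compact segment.
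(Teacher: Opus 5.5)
Your proposal is correct. Part (1) is the paper's argument. You assume each ray $\{gD_\alpha^{\pm m}\}_{m\ge0}$ meets the ball, and play the lower bounds on $m_\pm$ (from the distance of $g$ to the ball) against the upper bound on $m_++m_-$ (from the two dips being $2\eta\Norm{g}_{\mathcal S}$-close). The ``subtlety'' you raise is not one: ``each ray has some dip'' is exactly the negation of the claim, so the contradiction already gives a whole ray outside the ball. The closest-point rerun and the fallback via (2) are therefore unnecessary.

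The constant bookkeeping matters, because $\eta$ is fixed in advance. With $R=\Norm{g}_{\mathcal S}$, bound each $m_\pm\ge((1-\eta)R-Q)/q$ using the quasi-geodesic upper bound, not $D_0$ (which would need $\eta<1/(1+qD_0)$), and add the two. Counting a subpath of length only $\sim(1-\eta)R$, as in your second phrasing, loses this factor $2$. Done this way, you get the paper's inequality $2(1-(1+q^2)\eta)R\le(2+q^2)Q$, which contradicts $\eta<1/(1+q^2)$ for large $R$.

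For (2) and (3) you package things differently. The paper evaluates the supremum defining $L_\alpha$ (resp.\ $L_\beta$) at $h=g$ and uses the triangle inequality in the annular graph of $g(\alpha)$ (resp.\ $g(\beta)$). You instead use subadditivity and symmetry, $L_\alpha(gx)\ge L_\alpha(x)-M\Norm{g}_{\mathcal S}$, to reduce to $g=\id$. The two are equivalent in substance, since subadditivity is proved by exactly that equivariance-plus-triangle-inequality step. Your version, however, puts all the $g$-dependence into $L_\alpha(g)\le M\Norm{g}_{\mathcal S}$. The error in (3) is then just $\sup_n\tw_\alpha(\mu,D_\beta^n(\mu))$ over the finitely many disjoint pairs in $\mathcal F$. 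This makes the paper's ``absorb into $B$ since $\mathcal F$ is finite'' step transparent, whereas the paper's route faces $D_{g(\alpha)}$ acting on $\calC(g(\beta))$ for arbitrary $g$.

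One correction: the lift of $D_\beta$ to the annular cover of $\alpha$ does not fix the crossing arcs up to isotopy rel endpoints. It slides their endpoints at infinity within intervals cut off by lifts of $\beta$. It does move them only a bounded amount in $\calC(\alpha)$, uniformly in $n$, which is all you need. As in the paper, though, $B$ must be enlarged for $A=2B$ to hold.
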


\begin{proof}
We begin with \textup{(1)}. Fix $\alpha\in\mathcal F$ and $g\in\Map(\Sigma)$, and set
\[
R:=\Norm{g}_{\mathcal S}.
\]
Consider the bi-infinite quasi-geodesic
\[
\gamma:\ZZ\to \Map(\Sigma),\qquad \gamma(m):=gD_\alpha^m,
\]
whose quasi-geodesic constants may be taken to be $(q,Q)$ by
Remark~\ref{rem:uniform-constants}.

Suppose, for contradiction, that both rays $\{\gamma(m)\}_{m\ge 0}$ and
$\{\gamma(-m)\}_{m\ge 0}$ meet the ball $B(\id,\eta R)$. Then there exist integers
$m_+,m_-\ge 0$ such that
\[
\Norm{\gamma(m_+)}_{\mathcal S}\le \eta R
\qquad\text{and}\qquad
\Norm{\gamma(-m_-)}_{\mathcal S}\le \eta R.
\]
By the triangle inequality,
\[
d_{\mathcal S}\bigl(\gamma(0),\gamma(m_+)\bigr)\ge (1-\eta)R,
\qquad
d_{\mathcal S}\bigl(\gamma(0),\gamma(-m_-)\bigr)\ge (1-\eta)R.
\]
Since $\gamma$ is a $(q,Q)$-quasi-geodesic, this implies
\[
m_+\ge \frac{(1-\eta)R-Q}{q},
\qquad
m_-\ge \frac{(1-\eta)R-Q}{q}.
\]
On the other hand,
\[
d_{\mathcal S}\bigl(\gamma(-m_-),\gamma(m_+)\bigr)\le 2\eta R,
\]
while the quasi-geodesic lower bound gives
\[
d_{\mathcal S}\bigl(\gamma(-m_-),\gamma(m_+)\bigr)
\ge \frac{m_++m_-}{q}-Q.
\]
Combining these inequalities yields
\[
\frac{2(1-\eta)R-2Q}{q^2}-Q\le 2\eta R.
\]
Equivalently,
\[
2\bigl(1-(1+q^2)\eta\bigr)R\le (2+q^2)Q.
\]
Since $\eta<1/(1+q^2)$, this is impossible for all sufficiently large $R$.
Hence there exists $R_0\ge 0$, depending only on $q,Q,\eta$, such that whenever
$\Norm{g}_{\mathcal S}\ge R_0$, at least one of the two rays
$\{gD_\alpha^m\}_{m\ge 0}$ or $\{gD_\alpha^{-m}\}_{m\ge 0}$ is disjoint from
$B(\id,\eta R)$.

Now choose
\[
T_1:=\eta R_0.
\]
If $R\ge R_0$, then the good ray is disjoint from $B(\id,\eta R)$ and hence certainly
from $B(\id,\eta R-T_1)$. If $R\le R_0$, then $\eta R-T_1\le 0$, so
\eqref{eq:extending-branch} is automatic. This proves \textup{(1)}.

We now prove \textup{(2)}. Fix $\alpha\in\mathcal F$, $g\in\Map(\Sigma)$, and $m\in\ZZ$.
Set
\[
\beta:=g(\alpha).
\]
Since $gD_\alpha^m=D_\beta^m g$, taking $h=g$ in the definition of $L_\alpha$ gives
\[
L_\alpha(gD_\alpha^m)\ge \tw_\beta\bigl(\mu,D_\beta^m g(\mu)\bigr).
\]
By choice of $B$, the projection $\pi_\beta(g(\mu))$ has diameter at most $B$, and
$D_\beta^m$ acts on the annular curve graph of $\beta$ by translation. Therefore
\[
\tw_\beta\bigl(g(\mu),D_\beta^m g(\mu)\bigr)\ge |m|-B.
\]
By the triangle inequality in the annular curve graph,
\[
\tw_\beta\bigl(\mu,D_\beta^m g(\mu)\bigr)
\ge |m|-B-\tw_\beta\bigl(\mu,g(\mu)\bigr).
\]
Again taking $h=g$ in \eqref{eq:Lalpha-def}, we have
\[
\tw_\beta\bigl(\mu,g(\mu)\bigr)\le L_\alpha(g).
\]
Using Lemma~\ref{lem:Lalpha-length},
\[
L_\alpha(g)\le M_\alpha \Norm{g}_{\mathcal S}\le M\,\Norm{g}_{\mathcal S}.
\]
Hence
\[
L_\alpha(gD_\alpha^m)\ge |m|-M\,\Norm{g}_{\mathcal S}-B.
\]
Applying Lemma~\ref{lem:Lalpha-length} once more, and using $A=2B\ge B$, we obtain
\[
\Norm{gD_\alpha^m}_{\mathcal S}
\ge \frac{1}{M}\,L_\alpha(gD_\alpha^m)
\ge \frac{|m|-M\,\Norm{g}_{\mathcal S}-A}{M},
\]
which is \eqref{eq:one-twist-linear}.

For \textup{(3)}, let $\alpha,\beta\in\mathcal F$ be disjoint, and fix
$g\in\Map(\Sigma)$ and $m,n\in\ZZ$. Set
\[
\delta:=g(\beta).
\]
Then
\[
gD_\alpha^mD_\beta^n=D_{g(\alpha)}^m D_\delta^n g,
\]
and the twists $D_{g(\alpha)}^m$ and $D_\delta^n$ commute, since $g(\alpha)$ and $\delta$
are disjoint. Taking $h=g$ in the definition of $L_\beta$ gives
\[
L_\beta(gD_\alpha^mD_\beta^n)
\ge \tw_\delta\bigl(\mu,D_{g(\alpha)}^m D_\delta^n g(\mu)\bigr).
\]
Because $g(\alpha)$ is disjoint from $\delta$, the twist $D_{g(\alpha)}^m$ acts trivially on
the annular curve graph of $\delta$ up to a bounded error. Since $\mathcal F$ is finite,
this bounded error may be absorbed into $B$, and hence
\[
\tw_\delta\bigl(\mu,D_{g(\alpha)}^m D_\delta^n g(\mu)\bigr)
\ge \tw_\delta\bigl(\mu,D_\delta^n g(\mu)\bigr)-B.
\]
Applying the estimate from \textup{(2)} to the pair $(\beta,g)$ yields
\[
\tw_\delta\bigl(\mu,D_\delta^n g(\mu)\bigr)
\ge |n|-M\,\Norm{g}_{\mathcal S}-B.
\]
Therefore
\[
L_\beta(gD_\alpha^mD_\beta^n)\ge |n|-M\,\Norm{g}_{\mathcal S}-2B
=|n|-M\,\Norm{g}_{\mathcal S}-A.
\]
Using Lemma~\ref{lem:Lalpha-length}, we conclude that
\[
\Norm{gD_\alpha^mD_\beta^n}_{\mathcal S}
\ge \frac{|n|-M\,\Norm{g}_{\mathcal S}-A}{M}.
\]
Interchanging the roles of $\alpha$ and $\beta$ gives the same estimate with $|m|$ in place
of $|n|$. Combining the two bounds yields \eqref{eq:two-twist-linear}.
\end{proof}

\begin{proof}[Proof of Theorem~\ref{thm:Div2}]
Fix $R>0$ and let $g_1,g_2\in\Map(\Sigma)$ satisfy
\[
R\le \Norm{g_i}_{\mathcal S}\le 2R.
\]
Set
\[
h:=g_1^{-1}g_2,
\]
so that
\[
\Norm{h}_{\mathcal S}\le 4R.
\]

Apply Proposition~\ref{prop:link} to obtain curves
\[
\alpha_1,\dots,\alpha_n\in\Cnp(\Sigma)
\qquad\text{and elements}\qquad 
s_1,\dots,s_n\in \mathcal S\cup\{\id\}
\]
such that
\[
s_1\cdots s_n=h,\qquad
D_{\alpha_i}\text{ commutes with }s_i,\qquad
d_{\Cnp}(\alpha_i,\alpha_{i+1})\le 1,
\]
and
\[
n\le C_0\,\Norm{h}_{\mathcal S}\le 4C_0R.
\]

Compress consecutive equal curves into maximal blocks. Thus we obtain curves
\[
\beta_1,\dots,\beta_r\in\mathcal F
\qquad\text{and words}\qquad  
h_1,\dots,h_r
\]
such that:
\begin{itemize}
\item $h=h_1\cdots h_r$;
\item each $h_j$ is a product of a consecutive subword of the letters $s_i$;
\item $D_{\beta_j}$ commutes with every letter in the block $h_j$, and hence with $h_j$;
\item consecutive curves $\beta_j,\beta_{j+1}$ are distinct and adjacent in $\Cnp(\Sigma)$, hence disjoint;
\item the total number of generator letters appearing in the words $h_j$ is still $n$.
\end{itemize}

For $0\le j\le r$, define
\[
u_j:=h_1\cdots h_j,\qquad p_j:=g_1u_j.
\]
Thus
\[
u_0=\id,\quad u_r=h,\quad p_0=g_1,\quad p_r=g_2.
\]
Also, for every $j$,
\[
\Norm{u_j}_{\mathcal S}\le \Norm{h}_{\mathcal S}\le 4R,
\]
and hence
\[
\Norm{p_j}_{\mathcal S}\le \Norm{g_1}_{\mathcal S}+\Norm{u_j}_{\mathcal S}\le 6R.
\]
The same estimate holds for every intermediate prefix arising while reading a block $h_j$.

Choose signs $\varepsilon_1,\varepsilon_r\in\{\pm1\}$ using
Lemma~\ref{lem:extending-branch}\textup{(1)} applied to the pairs $(\beta_1,g_1)$ and
$(\beta_r,g_2)$. For the intermediate blocks $2\le j\le r-1$, choose signs
$\varepsilon_j\in\{\pm1\}$ arbitrarily.

Let $K_{\mathrm{tw}}\ge 1$ be a constant to be chosen below, and define
\[
N:=\lceil K_{\mathrm{tw}}R\rceil.
\]
Set
\[
v:=g_1D_{\beta_1}^{\varepsilon_1 N},
\qquad
w:=g_2D_{\beta_r}^{\varepsilon_r N}.
\]
By Lemma~\ref{lem:extending-branch}\textup{(1)}, the twist-ray path from $g_1$ to $v$
is disjoint from
\[
B(\id,\eta R-T_1),
\]
and similarly the twist-ray path from $g_2$ to $w$ is disjoint from the same ball.
It therefore remains to connect $v$ to $w$ outside this ball.

\begin{figure}
\begin{tikzpicture}[line cap=round,line join=round]

\def\slabW{2.2}     
\def\slabH{1.2}     
\def\depthx{0.95}   
\def\depthy{0.55}   
\def\gap{3.1}       

\newcommand{\slabunit}[2]{%
  \coordinate (A) at (#1,#2);
  \coordinate (B) at ($(A)+(\slabW,0)$);
  \coordinate (C) at ($(B)+(\depthx,\depthy)$);
  \coordinate (D) at ($(A)+(\depthx,\depthy)$);

  \draw[dashed, draw=black, fill=red, fill opacity=0.5] (A)--(B)--(C)--(D)--cycle;

}

\newcommand{\slabunitt}[2]{%
  \coordinate (A) at (#1,#2);
  \coordinate (B) at ($(A)+(\slabW,0)$);
  \coordinate (C) at ($(B)+(\depthx,\depthy)$);
  \coordinate (D) at ($(A)+(\depthx,\depthy)$);


  \coordinate (E) at ($(D)+(0,\slabH)$);
  \coordinate (F) at ($(C)+(0,\slabH)$);
}

\slabunit{-3.6}{3.3}

\begin{scope}[thick]
  \coordinate (P0) at (-3.6,3.1);
  \coordinate (P1) at ($(P0)+(\depthx,\depthy)$);
  \coordinate (P2) at ($(P1)+(0,2)$);
  \coordinate (P3) at ($(P0)+(0,2)$);
  \draw (P0)--(P3)--(P2)--(P1)--cycle;
\end{scope}

\slabunit{0.8}{4.5}

\begin{scope}[thick]
  \coordinate (Q0) at (0.8,3.1);
  \coordinate (Q1) at ($(Q0)+(\slabW,0)$);
  \coordinate (Q2) at ($(Q1)+(\depthx,\depthy)$);
  \coordinate (Q3) at ($(Q2)+(0,2.0)$);
  \coordinate (Q4) at ($(Q1)+(0,2.0)$);
  \draw (Q1)--(Q4)--(Q3)--(Q2)--cycle;
\end{scope}

\begin{scope}[thick]
  \coordinate (Q0) at (-1.4,3.1);
  \coordinate (Q1) at ($(Q0)+(\slabW,0)$);
  \coordinate (Q2) at ($(Q1)+(\depthx,\depthy)$);
 \coordinate (Q3) at ($(Q2)+(0,2.0)$);
  \coordinate (Q4) at ($(Q1)+(0,2.0)$);

\draw (Q1)--(Q4)--(Q3)--(Q2)--cycle;
\end{scope}

\draw[dashed,  draw=black, fill=yellow, fill opacity=0.5] (-0.8, 3.4)--(1.3, 3.4)--(1.3, 5.4)--(-0.8, 5.4)--(-0.8, 3.4);

\begin{scope}[thick]
  \coordinate (Q0) at (-3.6,3.1);
  \coordinate (Q1) at ($(Q0)+(\slabW,0)$);
  \coordinate (Q2) at ($(Q1)+(\depthx,\depthy)$);
  \coordinate (Q3) at ($(Q2)+(0,2.0)$);
  \coordinate (Q4) at ($(Q1)+(0,2.0)$);
\draw (Q1)--(Q4)--(Q3)--(Q2)--cycle;
\end{scope}

  \draw[dashed, draw=black, fill=yellow, fill opacity=0.5] (A)--(B)--(C)--(D)--cycle;
\end{tikzpicture}
\caption{Chain of flats form alternating product region}
\label{Fig:Chain-of-Flats}
\end{figure}

We now construct a path from $v$ to $w$ block by block.
Suppose we are at a point of the form
\[
p_{j-1}D_{\beta_j}^{\varepsilon_j N}.
\]
Read the block $h_j$ letter by letter. Since every letter in $h_j$ commutes with
$D_{\beta_j}$, this produces a path of length equal to the word-length of $h_j$ from
\[
p_{j-1}D_{\beta_j}^{\varepsilon_j N}
\quad\text{to}\quad
p_jD_{\beta_j}^{\varepsilon_j N}.
\]

For $1\le j\le r-1$, since $\beta_j$ and $\beta_{j+1}$ are disjoint, we connect
\[
p_jD_{\beta_j}^{\varepsilon_j N}
\quad\text{to}\quad
p_jD_{\beta_{j+1}}^{\varepsilon_{j+1}N}
\]
by first adding the new twist,
\[
p_jD_{\beta_j}^{\varepsilon_j N}
\longrightarrow
p_jD_{\beta_j}^{\varepsilon_j N}D_{\beta_{j+1}}^{\varepsilon_{j+1}m}
\qquad (0\le m\le N),
\]
and then removing the old twist,
\[
p_jD_{\beta_j}^{\varepsilon_j m}D_{\beta_{j+1}}^{\varepsilon_{j+1}N}
\longrightarrow
p_jD_{\beta_{j+1}}^{\varepsilon_{j+1}N}
\qquad (N\ge m\ge 0).
\]
This contributes length $2N$.

Concatenating all block-moves and all switch-moves yields a path from $v$ to $w$.
Its length is bounded by
\[
n+2(r-1)N\le n+2nN.
\]
Including the initial and terminal twist-ray segments gives a total path length at most
\[
2N+n+2nN.
\]
Since $n\le 4C_0R$ and $N\asymp R$, this is $O(R^2)$.

It remains to verify that the whole path stays outside a fixed linear ball about the identity.
There are three kinds of vertices to check.

\medskip
\noindent
\emph{Type 1: vertices on the initial and terminal twist-ray segments.}
These are outside $B(\id,\eta R-T_1)$ by construction.

\medskip
\noindent
\emph{Type 2: vertices encountered while reading a block $h_j$.}
Every such vertex has the form
\[
x=pD_{\beta_j}^{\varepsilon_j N},
\]
where $p$ is an intermediate prefix with $\Norm{p}_{\mathcal S}\le 6R$.
Hence Lemma~\ref{lem:extending-branch}\textup{(2)} gives
\[
\Norm{x}_{\mathcal S}
\ge \frac{N-6MR-A}{M}.
\]

\medskip
\noindent
\emph{Type 3: vertices encountered during a switch move.}
Every such vertex has the form
\[
x=p_jD_{\beta_j}^{\varepsilon_j m}D_{\beta_{j+1}}^{\varepsilon_{j+1}N}
\qquad\text{or}\qquad
x=p_jD_{\beta_j}^{\varepsilon_j N}D_{\beta_{j+1}}^{\varepsilon_{j+1}m}
\]
for some $0\le m\le N$.
Since $\beta_j$ and $\beta_{j+1}$ are disjoint and $\Norm{p_j}_{\mathcal S}\le 6R$,
Lemma~\ref{lem:extending-branch}\textup{(3)} yields
\[
\Norm{x}_{\mathcal S}
\ge \frac{N-6MR-A}{M}.
\]

Choose $K_{\mathrm{tw}}$ so large that
\[
\frac{K_{\mathrm{tw}}-6M}{M}\ge 2\eta.
\]
Then, for all sufficiently large $R$,
\[
\frac{N-6MR-A}{M}\ge \eta R.
\]
Choosing $T_0 \geq T_1$ large enough, we may assume that
\[
\frac{N-6MR-A}{M}\ge \eta R-T_0
\qquad\text{for all }R>0.
\]
Therefore every vertex on the detour from $v$ to $w$ lies outside
\[
B(\id,\eta R-T_0).
\]

Combining the initial twist-ray segment from $g_1$ to $v$, the detour from $v$ to $w$,
and the reverse of the twist-ray segment from $g_2$ to $w$, we obtain a path from
$g_1$ to $g_2$ which is disjoint from
\[
B(\id,\eta R-T_0)
\]
and has length at most $M_0R^2$ for some uniform constant $M_0$.
This proves the required quadratic upper bound.
\end{proof}
\subsection{End space of big mapping class groups}

It is a classical question to study the end space of a group. When the end space is nonempty,
the trichotomy of having $1$, $2$, or infinitely many ends is given by the
Freudenthal--Hopf theorem. Analogous trichotomy results have also been established for
countably generated and compactly generated groups in \cite{yves}, and for all non-locally finite
graphs in \cite{OP}.

In the setting of CB-generated big mapping class groups, the end space may be defined using the
Cayley graph associated to any CB word metric, by taking the inverse limit of the sets of connected
components of the complements of bounded balls. This question remained largely open until recently.
In \cite{OQW}, it was shown that for many avenue surfaces (defined in \cite{HQR}), the associated
mapping class group is one-ended. For these surfaces we have $\zeta(\Sigma)=2$.

We obtain a complementary result for a class of surfaces disjoint from those considered in
\cite{OQW}. For surfaces satisfying the assumptions of Theorem~\ref{introthmDiv}, one-endedness
follows from the quadratic divergence bound.

\begin{corollary}
Suppose $\Sigma$ is a stable surface and $\MapSig$ is CB-generated. Suppose additionally that
$\zeta(\Sigma)\ge 5$. Then, with respect to any CB-generating set, $\MapSig$ is one-ended.
\end{corollary}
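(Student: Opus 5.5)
We deduce one-endedness from the detour construction behind Theorem~\ref{thm:Div2}, rather than from the divergence function alone. Fix the CB generating set $\mathcal S$ of Theorem~\ref{thm:generating-set}. By Remark~\ref{rem:Div-QI} and \cite{Rosendal}, any two CB word metrics are quasi-isometric, and the number of ends of a geodesic (Cayley) graph is a quasi-isometry invariant. So it suffices to treat $(\Map(\Sigma),d_{\mathcal S})$.

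Recall that the space is one-ended (given it is unbounded, which holds since Theorem~\ref{thm:infinite-rank} gives quasi-flats) if the following holds. For every $r>0$ there is $r'\ge r$ such that any two points outside $B(\id,r')$ can be joined by a path avoiding $B(\id,r)$.

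Given $r$, we take $g_1,g_2$ with $\Norm{g_i}_{\mathcal S}\ge r'$. The hypothesis $R\le\Norm{g_i}\le 2R$ of Theorem~\ref{thm:Div2} need not hold for a common $R$, so we first reduce to it. Let $\Norm{g_1}\le\Norm{g_2}$ and set $R_1:=\Norm{g_1}$. Pick a geodesic from $\id$ to $g_2$ in the Cayley graph, and let $g_2'$ be the point on it with $\Norm{g_2'}=R_1$ (exact, since the path is geodesic). The subsegment from $g_2'$ to $g_2$ consists of points of norm at least $R_1\ge r'$, so it avoids $B(\id,r)$. Now $g_1$ and $g_2'$ both have norm exactly $R_1$. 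Theorem~\ref{thm:Div2} with $R=R_1$ then yields a path from $g_1$ to $g_2'$ disjoint from $B(\id,\eta R_1-T_0)$.

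We choose $r'$ so large that $\eta r'-T_0\ge r$, which works since $\eta>0$ and $T_0$ are uniform. This path then avoids $B(\id,r)$. Concatenating it with the geodesic segment from $g_2'$ to $g_2$ gives the required path.

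Finally we check that the space is not zero-ended, i.e.\ that it is unbounded. This holds because the twist lines of Lemma~\ref{lem:twist-quasiline} are quasi-isometrically embedded copies of $\ZZ$. Hence there is exactly one end.

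The main point requiring care is that Theorem~\ref{thm:Div2} is stated with the constants $\eta,T_0$ uniform in $R$. This uniformity is exactly what makes the escape radius $\eta R-T_0$ grow with $R$, and it was established there. The rest is the standard reduction described above.
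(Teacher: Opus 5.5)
Your argument is correct and is essentially the paper's own. The paper simply says one-endedness follows from the quadratic divergence bound: finite divergence means two far-away points can always be joined outside a ball whose radius grows linearly. You make this explicit by applying Theorem~\ref{thm:Div2} to $g_1$ and $g_2'$, where $g_2'$ comes from truncating a geodesic so that both points have norm exactly $R_1$; this cleanly handles the comparability-of-norms reduction that the paper only sketches via \cite{DMS}.
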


\end{document}